\numberwithin{equation}{section}
\numberwithin{theorem}{section}
\numberwithin{remark}{section}
\numberwithin{lemma}{section}
\numberwithin{corollary}{section}
\newcommand\restr[2]{{
		\left.\kern-\nulldelimiterspace 
		#1 
		\vphantom{\big|} 
		\right|_{#2} 
}}
\begin{document}

\title{Frequency Theorem and Inertial Manifolds for Neutral Delay Equations}

\titlerunning{Frequency Theorem and Inertial Manifolds for Neutral Delay Equations}        

\author{Mikhail Anikushin 
}


\institute{Mikhail Anikushin \at
              Department of
              Applied Cybernetics, Faculty of Mathematics and Mechanics,
              St. Petersburg University, 28 Universitetskiy prospekt, Peterhof, 198504, Russia \\
              \email{demolishka@gmail.com}           
}

\date{Received: date / Accepted: date}

\maketitle

\begin{abstract}
We study the quadratic regulator problem for linear control systems in Hilbert spaces, where the cost functional is in some sense unbounded. Our motivation comes from delay equations with the feedback part containing discrete delays or, in other words, measurements given by $\delta$-functionals, which are unbounded in $L_{2}$. Working in an abstract context in which such (and many others, including parabolic boundary control problems) equations can be treated, we obtain a version of the Frequency Theorem. It guarantees the existence of a unique optimal process and shows that the optimal cost is given by a quadratic Lyapunov-like functional. In our adjacent works it is shown that such functionals can be used to construct inertial manifolds and allow to treat and extend many works in the field in a unified manner. Here we concentrate on applications to delay equations and especially mention the works of R.A.~Smith on developments of convergence theorems and the Poincar\'{e}-Bendixson theory; and also the works of Yu.A.~Ryabov, R.D.~Driver and C.~Chicone on inertial manifolds for equations with small delays and their recent generalization for equations of neutral type given by S.~Chen and J.~Shen.

\keywords{Frequency theorem \and Delay equations \and Inertial manifolds \and Lyapunov functionals}
\subclass{35B42 \and 7L25 \and 34K35 \and 37L45 \and 37L15}
\end{abstract}

\tableofcontents
\section{Introduction}
We start by introducing precise definitions and then give discussions on the topic. In what follows, we assume that all the vector spaces are complex, unless otherwise is specified. Throughout this paper, $\mathcal{L}(\mathbb{E};\mathbb{F})$ denotes the space of bounded linear operators between Banach spaces $\mathbb{E}$ and $\mathbb{F}$ with the operator norm denoted by $\|\cdot\|_{\mathcal{L}(\mathbb{E};\mathbb{F})}$. If $\mathbb{E}=\mathbb{F}$, we usually write $\mathcal{L}(\mathbb{E})$. For the norm in a Banach space $\mathbb{E}$ we use the notation $\|\cdot\|_{\mathbb{E}}$. If $\mathbb{E}$ is a Hilbert space we will sometimes (mainly for $\mathbb{E}$ being the space of values of certain functions) use $|\cdot|_{\mathbb{E}}$ to denote the norm.

Let $G(t)$, where $t \geq 0$, be a $C_{0}$-semigroup acting in a Hilbert space $\mathbb{H}$ and let $A\colon \mathcal{D}(A) \subset \mathbb{H} \to \mathbb{H}$ be its generator with the domain $\mathcal{D}(A)$. For the theory of $C_{0}$-semigroups, we refer to the monographs of K.-J.~Engel and R.~Nagel \cite{EngelNagel2000} or S.G.~Krein \cite{Krein1971}. Let $\Xi$ be another Hilbert space and $B \in \mathcal{L}(\Xi;\mathbb{H})$. We will consider the linear inhomogeneous equation associated with the pair $(A,B)$ as
\begin{equation}
\label{EQ: ControlSystem}
\dot{v}(t) = Av(t) + B\xi(t)
\end{equation}
and call it a \textit{control system}. Usually $\Xi$ is referred as the \textit{control space} and $\Xi$-valued functions of time like $\xi(\cdot)$ are called \textit{control functions}. The operator $B$ is called a \textit{control operator}.

It is well-known that for every $T>0$, $v_{0} \in \mathbb{H}$ and $\xi(\cdot) \in L_{2}(0,T;\Xi)$ there exists a unique mild solution $v(t)=v(t; v_{0},\xi)$, where $t \in [0,T]$, to \eqref{EQ: ControlSystem} such that $v(0)=v_{0}$. This solution is a continuous $\mathbb{H}$-valued function and for $t \in [0,T]$ it is given by the Cauchy formula as
\begin{equation}
\label{EQ: MildSolution}
v(t) = G(t)v_{0} + \int_{0}^{t}G(t-s)B\xi(s)ds.
\end{equation}
Below we will require some additional regularity properties.

Throughout the paper, we always endow $\mathcal{D}(A)$ with the graph norm. Let us assume that there are a Banach space $\mathbb{E}_{0}$ and a Hilbert space $\mathbb{W}$ which are continuously embedded into $\mathbb{H}$ and identified with their images such that all the embeddings
\begin{equation}
	\label{EQ: AuxiliarySpacesEmbeddings}
	\mathcal{D}(A) \subset \mathbb{E}_{0} \subset \mathbb{H} \text{ and } \mathcal{D}(A) \subset \mathbb{W} \subset \mathbb{H}
\end{equation}
become continuous. Thus, after the named identifications, in \eqref{EQ: AuxiliarySpacesEmbeddings} we just have inclusions of dense (since $\mathcal{D}(A)$ is dense) subspaces in $\mathbb{H}$, where each inclusion becomes a bounded operator when the subspaces are endowed with their norms. In this sense, we do not distinguish between the elements of $\mathbb{E}_{0}$ or $\mathbb{W}$ and their images under the embeddings.

Note that in \eqref{EQ: AuxiliarySpacesEmbeddings} we assumed neither $\mathbb{E}_{0} \subset \mathbb{W}$ nor $\mathbb{W} \subset \mathbb{E}_{0}$, but both situations may appear (see Examples \ref{EX: ExampleDelayand} and \ref{EX: ExampleBoundary} below). Let us state the following regularity assumption which requires $\mathbb{E}_{0} \subset \mathbb{W}$.
\begin{description}
	\item[\textbf{(REG1)}\label{DESC:REG1}] For any $v_{0} \in \mathbb{E}_{0}$, $T>0$ and $\xi(\cdot) \in L_{2}(0,T;\Xi)$ we have $v(\cdot;v_{0},\xi) \in C([0,T];\mathbb{W})$ and the map $\mathbb{E}_{0} \times L_{2}(0,T;\Xi) \ni (v_{0},\xi(\cdot)) \mapsto v(\cdot;v_{0},\xi) \in C([0,T];\mathbb{W})$ is continuous.
\end{description}

Aimed to study the infinite-horizon quadratic optimization problem, we consider a bounded symmetric sesquilinear\footnote{We use the convention that a sesquilinear form is linear in the first and conjugate-linear in the second argument.} form $\mathcal{W}$ on $\mathcal{D}(A) \times \Xi$ and its Hermitian quadratic form $\mathcal{F}$ given by $\mathcal{F}(v,\xi) = \mathcal{W}\left((v,\xi),(v,\xi)\right)$ for $(v,\xi) \in \mathcal{D}(A) \times \Xi$. Let us consider the following assumption.
\begin{description}
	\item[\textbf{(REG2)}\label{DESC:REG2}] There exists a constant $C_{\mathcal{W}}>0$ such that for any $v_{0} \in \mathcal{D}(A)$, $T>0$ and $\xi(\cdot) \in C^{1}([0,T];\Xi)$ for the solution $v(\cdot)=v(\cdot;v_{0},\xi)$ we have\footnote{Note that in this case the solution $v(\cdot)$ is classical and, in particular, we have $v(\cdot) \in C([0,T];\mathcal{D}(A))$ (see Theorem 6.5, Chapter I in \cite{Krein1971}).}
	\begin{equation}
		\begin{split}
			\int_{0}^{T}\left| \mathcal{F} (v(t), \xi(t)) \right|dt \leq C_{\mathcal{W}} \left( |v(0) |^{2}_{\mathbb{W}} + |v(T) |^{2}_{\mathbb{W}} + \|v(\cdot)\|^{2}_{L_{2}(0,T;\mathbb{W})} + \| \xi(\cdot) \|^{2}_{L_{2}(0,T;\Xi)}\right).
		\end{split}
	\end{equation}
\end{description}
This condition is a kind of regularity of the control system w.r.t. the quadratic form $\mathcal{F}$, which will be used below to extend the integral functional to a larger class of solutions.

We say that the pair $(A,B)$ is $(L_{2},\mathbb{W})$-\textit{controllable} in $\mathbb{E}_{0}$ if for every $v_{0} \in \mathbb{E}_{0}$ there exists $\xi(\cdot) \in L_{2}(0,+\infty;\Xi)$ such that $v(\cdot)=v(\cdot;v_{0},\xi) \in L_{2}(0,+\infty;\mathbb{W})$.

Let us consider the Hilbert space $\mathcal{Z} := L_{2}(0,+\infty;\mathbb{W}) \times L_{2}(0,+\infty;\Xi)$ with the usual norm. Following A.L.~Likhtarnikov and V.A.~Yakubovich  \cite{Likhtarnikov1977}, let $\mathfrak{M}_{v_{0}}$ be the set of all $( v(\cdot),\xi(\cdot) ) \in \mathcal{Z}$ such that $v(\cdot)=v(\cdot;v_{0},\xi)$ for some $v_{0} \in \mathbb{H}$. Every such a pair is called a \textit{process through} $v_{0}$. Note that the property of $(L_{2},\mathbb{W})$-controllability in $\mathbb{E}_{0}$ of the pair $(A,B)$ is equivalent to that $\mathfrak{M}_{v_{0}}$ is nonempty for all $v_{0} \in \mathbb{E}_{0}$. Since the embedding $\mathbb{W} \subset \mathbb{H}$ is continuous, from \eqref{EQ: MildSolution} one can see that $\mathfrak{M}_{v_{0}}$ is a closed affine subspace of $\mathcal{Z}$ given by a proper translate of $\mathfrak{M}_{0}$ (see also Lemma \ref{LEM: OperatorDDelay} below).

Now we consider the quadratic functional $\mathcal{J}_{\mathcal{F}}$ associated with $\mathcal{F}$, which is formally given by
\begin{equation}
	\label{EQ: QuadraticFunctional}
	\mathcal{J}_{\mathcal{F}}(v(\cdot),\xi(\cdot)) := \int_{0}^{+\infty}\mathcal{F}(v(t),\xi(t))dt.
\end{equation}
It is important that $\mathcal{J}_{\mathcal{F}}$ may be not defined everywhere in $\mathcal{Z}$ (and $\mathcal{J}_{\mathcal{F}}$ may be unbounded in this sense). It will be shown below that it is sufficient (and, of course, necessary for the optimization problem to be correct) for $\mathcal{J}_{\mathcal{F}}$ to be defined on the \textit{space of processes} over $\mathbb{E}_{0}$, which is the linear space given by
\begin{equation}
	\label{EQ: SpaceZ0}
	\mathcal{Z}_{0} := \bigcup_{v_{0} \in \mathbb{E}_{0}} \mathfrak{M}_{v_{0}}
\end{equation}
and endowed with the norm
\begin{equation}
	\label{EQ: SpaceZ0Norm}
	\| (v(\cdot),\xi(\cdot)) \|_{\mathcal{Z}_{0}} := \| (v(\cdot), \xi(\cdot)) \|_{\mathcal{Z}} + \| v(0) \|_{\mathbb{E}_{0}}.
\end{equation}
It is clear that $\mathcal{Z}_{0}$ is a Banach space (see Lemma \ref{LEM: Z0IsBanach}).

\begin{remark}
\label{REM: RegConditionsImplyExtension}
It is important that under \nameref{DESC:REG1} and \nameref{DESC:REG2} the functional $\mathcal{J}_{\mathcal{F}}$ can be treated as a bounded continuous functional on $\mathcal{Z}_{0}$. Let us give a sketch of the proof. For $T>0$ and $v_{0} \in \mathbb{E}_{0}$ we consider the space $\mathfrak{M}^{T}_{v_{0}}$ of all finite-time processes through $v_{0}$ given by the pairs $(v(\cdot),\xi(\cdot))$ where $\xi(\cdot) \in L_{2}(0,T;\Xi)$ and $v(\cdot)=v(\cdot;v_{0},\xi)$. Consider the space $\mathcal{Z}^{T}_{0}$ given by the union of $\mathfrak{M}^{T}_{v_{0}}$ over $v_{0} \in \mathbb{E}_{0}$ and endow it with the norm (which is well-defined due to \nameref{DESC:REG1})
\begin{equation}
	\label{EQ: ZTcontinuousNorm}
	\| v(\cdot),\xi(\cdot) \|^{2}_{\mathcal{Z}^{T}_{0}} := |v(0) |^{2}_{\mathbb{W}} + |v(T) |^{2}_{\mathbb{W}} + \|v(\cdot)\|^{2}_{L_{2}(0,T;\mathbb{W})} + \| \xi(\cdot) \|^{2}_{L_{2}(0,T;\Xi)}.
\end{equation}
From \nameref{DESC:REG2} we can define the bounded quadratic functional $\mathcal{J}^{T}_{\mathcal{F}}$ on $\mathcal{Z}^{T}_{0}$ as
\begin{equation}
	\label{EQ: FiniteTimeQuadraticFuncGeneral}
	\mathcal{J}^{T}_{\mathcal{F}}(v(\cdot),\xi(\cdot)) := \int_{0}^{T}\mathcal{F}(v(s),\xi(s)) ds.
\end{equation} 
Note that the integral in \eqref{EQ: FiniteTimeQuadraticFuncGeneral} makes sense for $v_{0} \in \mathcal{D}(A)$ and $\xi(\cdot) \in C^{1}([0,T];\Xi)$. For more general elements of $\mathcal{Z}^{T}_{0}$ it should be treated as the limit of such integrals obtained by continuity. This is why we prefer to use the notations like $\mathcal{J}_{\mathcal{F}}$ and $\mathcal{J}^{T}_{\mathcal{F}}$ instead of integrals.

Now let $R_{T} \colon \mathcal{Z}_{0} \to \mathcal{Z}^{T}_{0}$ be the natural map given by the restriction of a process to $[0,T]$. Then it is not hard to check\footnote{A proper argument for this should use the (non-quadratic!) functionals \begin{equation}
		\mathcal{I}^{T}_{\mathcal{F}}(v(\cdot),\xi(\cdot)) := \int_{0}^{T}| \mathcal{F}(v(s),\xi(s)) |ds \text{ for } (v(\cdot),\xi(\cdot)) \in \mathcal{Z}^{T}_{0},
\end{equation}
which can be also defined by continuity on $\mathcal{Z}^{T}_{0}$.
} that for $(v(\cdot), \xi(\cdot)) \in \mathcal{Z}_{0}$ the limit
\begin{equation}
	\label{EQ: QuadraticLimitFunctionals}
	\mathcal{J}_{\mathcal{F}}(v(\cdot),\xi(\cdot)) := \lim\limits_{T \to +\infty} \mathcal{J}^{T}_{\mathcal{F}}(R_{T}( v(\cdot),\xi(\cdot) ) )
\end{equation}
exists and $\mathcal{J}_{\mathcal{F}}$ is bounded on $\mathcal{Z}_{0}$, namely, for all $(v(\cdot),\xi(\cdot)) \in \mathcal{Z}_{0}$ we have (here we use that $\mathbb{E}_{0} \subset \mathbb{W}$)
\begin{equation}
	\label{EQ: QuadaraticLimitExtensionBound}
	|\mathcal{J}_{\mathcal{F}}(v(\cdot),\xi(\cdot)) | \leq C_{\mathcal{W}}\| (v(\cdot),\xi(\cdot)) \|^{2}_{\mathcal{Z}_{0}}. 
\end{equation}
\qed
\end{remark}

For purposes of wider applications (for example, to parabolic systems with boundary controls), where \nameref{DESC:REG1} may not be satisfied, we consider the following assumption.
\begin{description}
	\item[\textbf{(QF)}\label{DESC:QF}] The formally defined quadratic functional $\mathcal{J}_{\mathcal{F}}$ from \eqref{EQ: QuadraticFunctional} induces a bounded quadratic functional on $\mathcal{Z}_{0}$ (endowed with the norm in \eqref{EQ: SpaceZ0Norm}) in the sense that the functionals $\mathcal{J}^{T}_{\mathcal{F}}$, which are defined for $v_{0} \in \mathcal{D}(A)$ and $\xi(\cdot) \in C^{1}([0,T];\Xi)$ by the integral in \eqref{EQ: FiniteTimeQuadraticFuncGeneral}, can be extended to the whole $\mathcal{Z}^{T}_{0}$ endowed with the norm
	\begin{equation}
		\label{EQ: ZTsummablenorm}
			\| v(\cdot),\xi(\cdot) \|^{2}_{\mathcal{Z}^{T}_{0}} := |v(0) |^{2}_{\mathbb{W}} + \|v(\cdot)\|^{2}_{L_{2}(0,T;\mathbb{W})} + \| \xi(\cdot) \|^{2}_{L_{2}(0,T;\Xi)}.
	\end{equation}
	by continuity and the limit \eqref{EQ: QuadraticLimitFunctionals} holds along with the bound in \eqref{EQ: QuadaraticLimitExtensionBound}.
\end{description}

Note that under \nameref{DESC:REG1} the norms in \eqref{EQ: ZTcontinuousNorm} and \eqref{EQ: ZTsummablenorm} are equivalent due to the bounded inverse theorem. Thus, as it was shown in Remark \ref{REM: RegConditionsImplyExtension}, conditions \nameref{DESC:REG1} and \nameref{DESC:REG2} imply \nameref{DESC:QF}.

Now let us take any process $(v(\cdot),\xi(\cdot)) \in \mathfrak{M}_{0}$. Let $\hat{v}(\cdot)$ and $\hat{\xi}(\cdot)$ be the Fourier transforms of $v(\cdot)$ and $\xi(\cdot)$, where the latter are extended by zero to the negative semi-axis. Then it can be shown that for almost all $\omega \in \mathbb{R}$ we have $\hat{v}(\omega) \in \mathcal{D}(A)$ and $i \omega \hat{v}(\omega) = A\hat{v}(\omega) + B\hat{\xi}(\omega)$ satisfied (see \cite{Likhtarnikov1977,LouisWexler1991} or Lemma \ref{LEM: QuadraticFormDelayQf} below). Consider the following assumption.
\begin{description}
	\item[\textbf{(FT)}\label{DESC:FT}] For any process $(v(\cdot),\xi(\cdot)) \in \mathfrak{M}_{0}$ we have
	\begin{equation}
		\label{EQ: QF2FourierIdentity}
		\mathcal{J}_{\mathcal{F}}( v(\cdot),\xi(\cdot) ) = \int_{-\infty}^{+\infty}\mathcal{F}(\hat{v}(\omega),\hat{\xi}(\omega))d\omega,
	\end{equation}
	where $\hat{v}(\cdot)$ and $\hat{\xi}(\cdot)$ are the Fourier transforms of $v(\cdot)$ and $\xi(\cdot)$ (extended by zero to the negative semi-axis) in the spaces $L_{2}(\mathbb{R};\mathbb{H})$ and $L_{2}(\mathbb{R};\Xi)$ respectively.
\end{description}

\begin{remark}
	In the case of delay equations \nameref{DESC:FT} is related to special structure of solutions (see Lemma \ref{LEM: QuadraticFormDelayQf}), which also allows to interpret the quadratic functional $\mathcal{J}_{\mathcal{F}}$ on $\mathcal{Z}_{0}$ (see \eqref{EQ: NeutralInterpretationQuadraticFunctionalInfinity}).
\end{remark}

To help the reader understand the context better, we give two examples of our main interest (of course, these are not the only ones possible, see also \cite{Anikushin2020FreqParab}). Note also that there appears a Banach space $\mathbb{E} \supset \mathcal{D}(A)$ such that the quadratic form $\mathcal{F}$ is bounded on $\mathbb{E} \times \Xi$. This space is essential for assumption \nameref{DESC:RES} below and further investigations concerned with delay equations.
\begin{example}[Neutral delay equations]
	\label{EX: ExampleDelayand}
    For some $\tau>0$ let $\mathbb{E} := C([-\tau;0];\mathbb{C}^{n})$ be embedded into 
		\begin{equation}
			\mathbb{E}_{0} = \mathbb{W} = \mathbb{H} := \mathbb{C}^{n} \times L_{2}(-\tau,0;\mathbb{C}^{n})
		\end{equation}
		as $\phi \mapsto (\phi(0) + D_{0} \phi, \phi)$, where $D_{0} \colon C([-\tau;0];\mathbb{C}^{n}) \to \mathbb{C}^{n}$ is a bounded linear operator which is non-atomic at zero (as in Subsection \ref{SUBSEC: NDELinearSystems}). Put also $\Xi := \mathbb{C}^{m}$. In Subsection \ref{SUBSEC: NDELinearSystems} we will define operators $A$ and $B$ corresponding to neutral delay equations in $\mathbb{R}^{n}$ ($\mathbb{C}^{n}$). Let $\mathbb{M} := \mathbb{C}^{r}$ and $C \colon \mathbb{E} \to \mathbb{M}$ be a linear bounded operator. One can consider the quadratic form given by
		\begin{equation}
			\label{EQ: DelayEquationFormExample}
			\mathcal{F}(v,\xi) = |\xi|^{2}_{\Xi} - \Lambda^{2}|Cv|^{2}_{\mathbb{M}}
		\end{equation}
		for some constant $\Lambda>0$. Although $C$ is defined only on $\mathbb{E}$ (for example, it may be a $\delta$-functional), we have the estimate in \nameref{DESC:REG2} satisfied (see \eqref{EQ: NeutralInterpretationEstimateFiniteT}) for the solutions of linear inhomogeneous problems corresponding to the operator $A + \nu I$, where $\nu \in \mathbb{R}$. Since \nameref{DESC:REG1} is obviously satisfied in this case and $\mathbb{E}_{0} = \mathbb{W}$, we have \nameref{DESC:QF} satisfied. Moreover, \nameref{DESC:FT} is proved in Lemma \ref{LEM: QuadraticFormDelayQf}. \qed
\end{example}
\begin{example}[Parabolic boundary control problems]
	\label{EX: ExampleBoundary}
	Let $-A$ be a positive self-adjoint operator acting in a Hilbert space $\mathbb{E}_{0}=\mathbb{W}_{0}$ and having compact resolvent. Let $\mathbb{W}_{\beta} := \mathcal{D}((-A)^{\beta})$ for $\beta \in \mathbb{R}$ be the scale of Hilbert spaces given by the powers of $-A$. Then $A$ can be extended to an operator (which we still denote by $A$) in $\mathbb{H} := \mathbb{W}_{-1/2}$ with the domain $\mathcal{D}(A) = \mathbb{E} = \mathbb{W} := \mathbb{W}_{1/2}$, which generates a $C_{0}$-semigroup in $\mathbb{H}$ and satisfies
	\begin{equation}
		(-Av,v)_{\mathbb{W}_{0}} = |v|^{2}_{\mathbb{W}_{1/2}}
	\end{equation}
	Let $B \colon \Xi \to \mathbb{H}$ be a bounded linear operator (for example, $\Xi$ may be the Sobolev space of traces on the boundary of a domain). Such operators correspond to parabolic problems with boundary controls (nonhomogeneous boundary conditions; see \cite{Likhtarnikov1976}). Now we consider the control system given by the pair $(A+\nu I, B)$ for some $\nu \in \mathbb{R}$. For $v_{0} \in \mathbb{E}_{0} = \mathbb{W}_{0}$ and $\xi(\cdot) \in L_{2}(0,T;\Xi)$ the solution $v(\cdot)=v(\cdot;v_{0},\xi)$ belongs to the Lions-Magenes space $W(0,T)$ given by the functions $v(\cdot)$ such that $v(\cdot) \in L_{2}(0,T;\mathbb{W}_{1/2})$ and $\dot{v}(\cdot) \in L_{2}(0,T;\mathbb{W}_{-1/2})$. Since $\mathbb{W} = \mathcal{D}(A) = \mathbb{W}_{1/2}$ and $v(\cdot) \in L_{2}(0,T;\mathbb{W}_{1/2})$, we have \nameref{DESC:QF} satisfied. Moreover, \nameref{DESC:FT} is obvious due to the Plancherel theorem and $\mathbb{E}=\mathbb{W}$. Note also that $W(0,T)$ can be naturally continuously embedded in $C([0,T];\mathbb{W}_{0})$ (see Chapter 3 in the monograph of J.L.~Lions \cite{Lions1971OptCont}). The regulator problem for such systems was studied by V.A.~Yakubovich and A.L.~Likhtarnikov in \cite{Likhtarnikov1976}. \qed
\end{example}

Under \nameref{DESC:QF} we call a process through $v_{0} \in \mathbb{E}_{0}$ \textit{optimal} if it is a minimum point of $\mathcal{J}_{\mathcal{F}}$ on $\mathfrak{M}_{v_{0}}$. To describe conditions for the existence of a unique optimal process we will use the values $\alpha_{1}$,$\alpha_{2}$ and $\alpha_{3}$ introduced below.

Namely, we will use the value $\alpha_{1}$ given by
\begin{equation}
	\label{EQ: Alpha1}
	\alpha_{1}:= \inf_{(v(\cdot),\xi(\cdot)) \in \mathfrak{M}_{0}} \frac{\mathcal{J}_{\mathcal{F}}(v(\cdot),\xi(\cdot))}{\|v(\cdot),\xi(\cdot)\|^{2}_{\mathcal{Z}}},
\end{equation}
the value $\alpha_{2}$ given by
\begin{equation}
	\label{EQ: Alpha2}
	\alpha_{2} := \inf \frac{ \mathcal{F}(v,\xi) }{ |v|^{2}_{\mathbb{W}} + |\xi|^{2}_{\Xi} },
\end{equation}
where the infimum is taken over all $\omega \in \mathbb{R}$, $v \in \mathcal{D}(A)$ and $\xi \in \Xi$ such that $i\omega v = Av + B\xi$, and the value $\alpha_{3}$ given by
\begin{equation}
	\label{EQ: Alpha3}
	\alpha_{3} := \inf_{\omega \in \mathbb{R}} \inf_{ \xi \in \Xi} \frac{\mathcal{F}(-(A-i\omega I)^{-1}B\xi,\xi) }{|\xi|^{2}_{\Xi}}
\end{equation}
Moreover, we say that $\alpha_{3}$ is \textit{well-defined} if the spectrum of $A$ does not intersect with the imaginary axis and the operators $(A-i\omega I)^{-1}B$ are bounded in the norm of $\mathcal{L}(\Xi;\mathbb{W})$ uniformly for all $\omega \in \mathbb{R}$.

Let $\langle v, f \rangle := f(v)$ be the natural dual pairing between $v \in \mathbb{E}_{0}$ and $f \in \mathbb{E}^{*}_{0}$. We follow the convention that $\mathbb{E}^{*}_{0}$ consists of linear functionals $f$ on $\mathbb{E}_{0}$ and the multiplication of $f$ on $\lambda \in \mathbb{C}$ is given by $(\lambda f)(v) := \overline{\lambda} f(v)$ for each $v \in \mathbb{E}_{0}$, where $\overline{\lambda}$ is the complex conjugate of $\lambda$. One of our main results is contained in the following theorem.
\begin{theorem}
	\label{TH: OptimalProcessDelay}
	Let the pair $(A,B)$ be $(L_{2},\mathbb{W})$-controllable in $\mathbb{E}_{0}$ and suppose that \nameref{DESC:QF} and \nameref{DESC:FT} are satisfied for the Hermitian form $\mathcal{F}$. We have the following:
	
	1). If $\alpha_{1}>0$, then for every $v_{0} \in \mathbb{E}_{0}$ the quadratic functional $\mathcal{J}_{\mathcal{F}}$ has a unique minimum $(v^{0}(\cdot;v_{0}),\xi^{0}(\cdot;v_{0}))$ on $\mathfrak{M}_{v_{0}}$ and the map
	\begin{equation}
		\mathbb{E}_{0} \ni v_{0} \mapsto (v^{0}(\cdot;v_{0}),\xi^{0}(\cdot;v_{0})) \in \mathcal{Z}_{0}
    \end{equation}
	is a bounded linear operator. Moreover, there exists $P \in \mathcal{L}(\mathbb{E}_{0};\mathbb{E}^{*}_{0})$ such that for all $v_{0} \in \mathbb{E}_{0}$ we have
    \begin{equation}
    \langle v_{0}, Pv_{0} \rangle = \mathcal{J}_{\mathcal{F}}(v^{0}(\cdot;v_{0}),\xi^{0}(\cdot;v_{0})).
    \end{equation}
    The operator $P$ is symmetric in the sense that $\langle v, Pw \rangle = \overline{\langle w, Pv \rangle}$ for all $v, w \in \mathbb{E}_{0}$. For its quadratic form $V(v):=\langle v, Pv \rangle$ and any $T \geq 0$ we have
    \begin{equation}
    \label{EQ: OperatorSolutionsInequality}
    V(v(T)) - V(v_{0}) + \int_{0}^{T} \mathcal{F}(v(t),\xi(t))dt \geq 0,
    \end{equation}
	where $v(t)=v(t;v_{0},\xi)$ is the mild solution to \eqref{EQ: ControlSystem} with arbitrary $v(0)=v_{0} \in \mathbb{E}_{0}$ and $\xi(\cdot) \in L_{2}(0,T;\Xi)$ which additionally satisfies $v(\cdot) \in C([0,T];\mathbb{E}_{0})$ and the integral with $\mathcal{F}$ is understood in the sense of \nameref{DESC:QF}.
	
	2). $\alpha_{1}>0$ is equivalent to $\alpha_{2}>0$. Moreover, if $\alpha_{3}$ is well-defined in the above given sense, then $\alpha_{2}>0$ is equivalent to $\alpha_{3}>0$.
\end{theorem}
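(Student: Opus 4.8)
The proof naturally splits into the two assertions of the theorem, and I would organize it accordingly.

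\medskip

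\textbf{Plan for Part 1 (existence, uniqueness, and the operator $P$).} The strategy is the classical Hilbert-space variational argument adapted to the rigged setting. Under \textbf{(REG)} and \textbf{(QF)} the functional $\mathcal{J}_{\mathcal{F}}$ is a continuous quadratic form on the Banach space $\mathcal{Z}_{0}$, and on the affine subspace $\mathfrak{M}_{v_{0}} = (v^{*},\xi^{*}) + \mathfrak{M}_{0}$ it differs from its restriction to $\mathfrak{M}_{0}$ by a bounded linear term. The key coercivity estimate is that $\alpha_{1}>0$ forces $\mathcal{J}_{\mathcal{F}}(w) \geq \alpha_{1}\|w\|_{\mathcal{Z}}^{2}$ for all $w \in \mathfrak{M}_{0}$; since $\mathfrak{M}_{0}$ is a closed subspace of the Hilbert space $\mathcal{Z}$ (note $v(0)=0$ on $\mathfrak{M}_{0}$, so the $\mathcal{Z}_{0}$-norm and the $\mathcal{Z}$-norm coincide there), $\mathfrak{M}_{0}$ is itself a Hilbert space on which $\mathcal{J}_{\mathcal{F}}$ is a bounded, coercive, symmetric quadratic form. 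The existence and uniqueness of the minimizer on $\mathfrak{M}_{v_{0}}$ then follows from the Lax--Milgram theorem (equivalently, projection onto a closed convex set, or direct method via weak lower semicontinuity). Linearity of $v_{0}\mapsto (v^{0},\xi^{0})$ is immediate from the variational characterization: the Euler--Lagrange condition is a linear equation in the minimizer. Boundedness of this operator into $\mathcal{Z}_{0}$ uses the coercivity together with continuity of the particular-solution map $v_{0}\mapsto(v^{*},\xi^{*})$, which is where $(L_2,\mathbb{H})$-controllability in $\mathbb{E}$ and the definition \eqref{EQ: SpaceZ0Norm} of the $\mathcal{Z}_{0}$-norm enter. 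Given the optimal-process operator, define $V(v_{0}) := \mathcal{J}_{\mathcal{F}}(v^{0}(\cdot,v_{0}),\xi^{0}(\cdot,v_{0}))$; it is a continuous quadratic form on $\mathbb{E}$, hence is represented by a symmetric $P\in\mathcal{L}(\mathbb{E};\mathbb{E}^{*})$. The inequality \eqref{EQ: OperatorSolutionsInequality} is Bellman's optimality principle: concatenating an arbitrary control on $[0,T]$ with the optimal control for $v(T)$ on $[T,\infty)$ produces an admissible process through $v_{0}$, whose cost is $\int_0^T \mathcal{F} + V(v(T))$ and which cannot beat $V(v_0)$; one must check the concatenation is admissible (mild-solution semigroup property) and that the optimal tail indeed has finite cost.

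\medskip

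\textbf{Plan for Part 2 (equivalence of the frequency conditions).} I would prove $\alpha_1>0 \Leftrightarrow \alpha_2>0$ by the usual two implications. For $\alpha_1>0 \Rightarrow \alpha_2>0$: given $\omega\in\mathbb{R}$, $v\in\mathcal{D}(A)$, $\xi\in\Xi$ with $i\omega v = Av+B\xi$, one wants to build (or approximate) elements of $\mathfrak{M}_0$ from the "harmonic" data $(e^{i\omega t}v, e^{i\omega t}\xi)$. Since these are not in $L_2(0,\infty)$, the standard device is to use finite-time truncations on $[0,N]$ with a suitable correction near $t=N$ to land in $\mathfrak{M}_0$, pass to the limit, and use that $\mathcal{F}$ is, up to the lower-order terms controlled by \textbf{(QF)}, "diagonalized" by the exponential so that the Rayleigh quotient in \eqref{EQ: Alpha1} converges to $\mathcal{F}(v,\xi)/(|v|^2_{\mathbb{H}}+|\xi|^2_{\Xi})$. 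For $\alpha_2>0 \Rightarrow \alpha_1>0$: this is a Plancherel/Fourier-transform argument. For $(v(\cdot),\xi(\cdot))\in\mathfrak{M}_0$, take Fourier--Laplace transforms $\hat v(i\omega),\hat\xi(i\omega)$; the relation $\dot v = Av+B\xi$ with $v(0)=0$ gives $i\omega\hat v(i\omega) = A\hat v(i\omega)+B\hat\xi(i\omega)$ for a.e.\ $\omega$, and by Parseval $\mathcal{J}_{\mathcal{F}}(v,\xi) = \frac{1}{2\pi}\int_{-\infty}^{\infty}\mathcal{F}(\hat v(i\omega),\hat\xi(i\omega))d\omega \geq \frac{\alpha_2}{2\pi}\int(|\hat v|^2_{\mathbb{H}}+|\hat\xi|^2_{\Xi})d\omega = \alpha_2(\|v\|_{\mathcal{Z}_1}^2+\|\xi\|_{\mathcal{Z}_2}^2)$. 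The subtlety is that $\mathcal{F}$ contains unbounded (in $\mathbb{H}\times\Xi$) terms $\mathcal{F}_2,\mathcal{F}_4$, so one must justify that $\omega\mapsto\mathcal{F}(\hat v(i\omega),\hat\xi(i\omega))$ is integrable and that Parseval applies term-by-term; this is exactly what \textbf{(QF)} is designed to give, possibly first for a dense class (e.g.\ $\xi$ smooth compactly supported, $v_0\in\mathcal{D}(A)$) and then by continuity. Finally, $\alpha_2>0 \Leftrightarrow \alpha_3>0$ when $\alpha_3$ is well-defined: on the imaginary axis the constraint $i\omega v = Av+B\xi$ with $A-i\omega I$ invertible is equivalent to $v = -(A-i\omega I)^{-1}B\xi$, so the two infima are over the same set once one checks that $|v|_{\mathbb{H}}^2+|\xi|_{\Xi}^2$ and $|\xi|_{\Xi}^2$ are comparable uniformly in $\omega$ — and that is precisely the uniform boundedness of $(A-i\omega I)^{-1}B$ assumed in the definition of "well-defined", which (as the footnote stresses) does not follow automatically from a spectral gap.

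\medskip

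\textbf{Main obstacle.} The delicate point is the $\alpha_2>0\Rightarrow\alpha_1>0$ direction in the unbounded setting: making the Parseval computation rigorous when $\mathcal{F}_2$ and $\mathcal{F}_4$ are unbounded on $\mathbb{H}\times\Xi$. One has to know that for $(v,\xi)\in\mathfrak{M}_0$ the transformed integrand $\mathcal{F}(\hat v(i\omega),\hat\xi(i\omega))$ makes sense and is in $L_1(\mathbb{R})$, and that the value of $\mathcal{J}_{\mathcal{F}}$ computed in the time domain (where, by \textbf{(REG)}, $v(\cdot)\in C(\mathbb{R}_+;\mathbb{E})$ so $\mathcal{F}(v(t),\xi(t))$ is pointwise defined) equals the frequency-domain integral. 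The route is to establish the identity on a dense subspace of $\mathfrak{M}_0$ on which everything is classical, show both sides are continuous in the $\mathcal{Z}_0$-topology using \textbf{(QF)} and \textbf{(REG)}, and extend; controllability guarantees the relevant spaces are large enough for these density arguments to have content.
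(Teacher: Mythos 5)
Your Part 1 and the $\alpha_{2}>0\Rightarrow\alpha_{1}>0$ and $\alpha_{2}\Leftrightarrow\alpha_{3}$ steps follow essentially the paper's route: the paper sets up the affine translate $\mathfrak{M}_{v_{0}}=\mathfrak{M}_{0}+Dv_{0}$ with $D$ obtained by the closed graph theorem, splits $\mathcal{J}_{\mathcal{F}}$ into linear and quadratic parts on $\mathfrak{M}_{0}$, applies Riesz representation and Lax--Milgram to the coercive self-adjoint operator $Q_{0}$ with $\inf(z,Q_{0}z)/(z,z)=\alpha_{1}$, defines $P$ through the linear part evaluated along optimal processes, and proves \eqref{EQ: OperatorSolutionsInequality} by the same concatenation (Bellman) argument. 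For $\alpha_{2}>0\Rightarrow\alpha_{1}>0$ the paper also uses Plancherel, and handles the unboundedness of $\mathcal{F}_{2},\mathcal{F}_{4}$ not by a density argument but by truncated Fourier integrals $\widetilde{v}_{k}(\omega)$ along a subsequence $T_{k}$, upgrading $\mathbb{H}$-convergence to $\mathcal{D}(A)$-convergence via the closedness of $A$ and the resolvent identity, hence to $\mathbb{E}$-convergence a.e., which is where \textbf{(REG)} and $\mathcal{F}_{j}\in\mathcal{L}(\mathbb{E};\Xi)$ enter; your "dense subspace plus continuity" alternative is plausible but you would still need exactly this kind of a.e.\ $\mathbb{E}$-convergence to make sense of $\mathcal{F}_{j}\hat{v}(\omega)$.

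The genuine divergence is the direction $\alpha_{1}>0\Rightarrow\alpha_{2}>0$. You propose the classical direct construction: truncate the harmonic pair $(e^{i\omega t}v,e^{i\omega t}\xi)$ on $[0,N]$ with a correction near $t=N$ and pass to the limit in the Rayleigh quotient \eqref{EQ: Alpha1}. As sketched this has a gap: elements of $\mathfrak{M}_{0}$ must satisfy $h_{v}(0)=0$, while the harmonic process passes through $v\neq 0$, and the paper's $(L_{2},\mathbb{H})$-controllability does not provide exact steering from $0$ to $e^{i\omega T_{0}}v$, so a correction near $t=N$ alone cannot place your test function in $\mathfrak{M}_{0}$. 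The standard repair is to subtract a fixed process $(w,\eta)\in\mathfrak{M}_{v}$ (which controllability does supply) from the truncated-and-tailed harmonic, and then show the cross terms are $O(\sqrt{N})$ using \textbf{(QF)} while the diagonal terms grow like $N\mathcal{F}(v,\xi)$ and $N(|v|^{2}_{\mathbb{H}}+|\xi|^{2}_{\Xi})$; this works but requires careful bookkeeping with the unbounded terms. The paper avoids the construction entirely: it applies Part 1 to the perturbed form $\mathcal{F}_{\delta}=\mathcal{F}-\delta(|v|^{2}_{\mathbb{H}}+|\xi|^{2}_{\Xi})$ (for which $\alpha_{1}>0$ persists for small $\delta$), obtains $P_{\delta}$ and the integral inequality \eqref{EQ: MonotoneIneqAlpha1ImpliesA2}, then differentiates at $t=0$ along constant controls $\xi\equiv\xi_{0}$ and initial data $v_{0}\in\mathcal{D}(A)$ with $Av_{0}+B\xi_{0}\in\mathcal{D}(A)$ (Lemma \ref{LEM: ConstantDifferentiationProperty}, which gives the limit in the graph norm of $A$ and hence in $\mathbb{E}$), arriving at $2\operatorname{Re}\langle Av_{0}+B\xi_{0},P_{\delta}v_{0}\rangle+\mathcal{F}(v_{0},\xi_{0})\geq\delta(\cdot)$; plugging in $i\omega v_{0}=Av_{0}+B\xi_{0}$ annihilates the first term because $\langle v_{0},P_{\delta}v_{0}\rangle$ is real. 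The paper's route buys a clean reuse of Part 1 at the price of the differentiation lemma; your route is self-contained at the price of the asymptotic estimates and the initial-condition fix above.
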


For many applications (for example, concerned with inertial manifolds) we often need only the operator $P$ such that the inequality in \eqref{EQ: OperatorSolutionsInequality} is satisfied. It turns out that in this case we can get rid of the controllability assumption (necessary for the optimization problem to be correct), which is not easy to check in practice (or it even may fail to hold). On this way, the main idea is to consider a modified control system with an extended control space, for which the controllability assumption is automatically satisfied (see below). This idea was firstly suggested in the work of A.V.~Proskurnikov \cite{Proskurnikov2015} for the Frequency Theorem from \cite{Likhtarnikov1977}. In our case, we will use the following assumption.
\begin{description}
	\item[\textbf{(RES)}\label{DESC:RES}] There exists a Banach space $\mathbb{E}$ continuously embedded into $\mathbb{H}$ and identified with its image under the embedding such that $\mathcal{D}(A) \subset \mathbb{E} \subset \mathbb{W}$ and both inclusions are continuous. Moreover, the operator $A$ does not have spectrum on the imaginary axis and
	\begin{description}
		\item[\textbf{(RES1)}\label{DESC:RES1}] The quadratic form $\mathcal{F}$ is defined and bounded on $\mathbb{E} \times \Xi$;
		\item[\textbf{(RES2)}\label{DESC:RES2}] The norms of $(A-i\omega I)^{-1}$ in $\mathcal{L}(\mathbb{W};\mathbb{E})$ are bounded uniformly in $\omega \in \mathbb{R}$;
		\item[\textbf{(RES3)}\label{DESC:RES3}] The norms of $(A-i\omega I)^{-1}B$ in $\mathcal{L}(\Xi;\mathbb{E})$ are bounded uniformly in $\omega \in \mathbb{R}$.
	\end{description}
\end{description}

It is convenient to introduce the following definition. We will say that the semigroup $G(t)$, where $t \geq 0$, is \textit{potentially} $(L_{2},\mathbb{W})$-\textit{controllable} in $\mathbb{E}_{0}$ if for any $v_{0} \in \mathbb{E}_{0}$ there exists $\varkappa \in \mathbb{R}$ such that for $v_{\varkappa}(t) = e^{\varkappa t}G(t)v_{0}$ we have $v_{\varkappa}(\cdot) \in L_{2}(0,\infty;\mathbb{W})$. For example, such a property is satisfied if $G(t)$ is also a $C_{0}$-semigroup in $\mathbb{W}$ and $\mathbb{E}_{0} \subset \mathbb{W}$ (as in Example \ref{EX: ExampleDelayand} or semilinear parabolic equations \cite{Anikushin2020FreqParab}). Moreover, in Example \ref{EX: ExampleBoundary} it is not the case that $\mathbb{E}_{0} \subset \mathbb{W}$, but we have a smoothing estimate that gives the desired.

For a given pair $(A,B)$ we define the \textit{extended control pair} $(A,\widehat{B})$, where $\widehat{B} \in \mathcal{L}(\widehat{\Xi},\mathbb{H})$ is the \textit{extended control operator} acting from the \textit{extended control space} $\widehat{\Xi} := \Xi \times \mathbb{W}$ such that $\widehat{B}(\xi,\eta) = B\xi + \eta$ for any $(\xi,\eta) \in \widehat{\Xi}$. It can be seen that the potential $(L_{2},\mathbb{W})$-controllability in $\mathbb{E}_{0}$ implies that $(A,\widehat{B})$ is $(L_{2},\mathbb{W})$-controllable in $\mathbb{E}_{0}$ (see Section \ref{SUBSEC: RelaxingControllability}).

The following theorem extends the corresponding result from \cite{Proskurnikov2015} and our work\footnote{Note that Theorem 3 from \cite{Anikushin2020FreqParab} requires a clarification of its statement which is concerned with replacing formula (3.6) therein to an analog of \eqref{EQ: RelaxedFreqThLyapunivInequality} from the present work.} \cite{Anikushin2020FreqParab}.
\begin{theorem}
	\label{TH: FreqThRelaxedDelay}
	Let the pair $(A,B)$ satisfy \nameref{DESC:RES} with some Hermitian form $\mathcal{F}$. Moreover, let the extended control pair $(A,\widehat{B})$ be $(L_{2},\mathbb{W})$-controllable in $\mathbb{E}_{0}$ and satisfy \nameref{DESC:QF} and \nameref{DESC:FT} w.r.t. the extended Hermitian form $\widehat{\mathcal{F}}(v,\xi,\eta):=\mathcal{F}(v,\xi)$ of $(v,\xi,\eta) \in \mathbb{E} \times \Xi \times \mathbb{W}$. Then the following conditions posed for $(A,B)$ and $\mathcal{F}$ are equivalent:
	
	1). $\alpha_{2}>0$
	
	2). There exists $P \in \mathcal{L}(\mathbb{E}_{0};\mathbb{E}^{*}_{0})$ such that $P$ is symmetric and for $V(v):=\langle v, Pv \rangle$ there exists $\delta>0$ such that for any $T \geq 0$ we have
	\begin{equation}
	\label{EQ: RelaxedFreqThLyapunivInequality}
	V(v(T)) - V(v_{0}) + \int_{0}^{T} \mathcal{F}(v(t),\xi(t))dt \geq \delta \int_{0}^{T}( |v(t)|^{2}_{\mathbb{W}} + |\xi(t)|^{2}_{\Xi})dt,
	\end{equation}
	where $v(t)=v(t;v_{0},\xi)$ is the mild solution to \eqref{EQ: ControlSystem} with arbitrary $v(0)=v_{0} \in \mathbb{E}_{0}$ and $\xi(\cdot) \in L_{2}(0,T;\Xi)$ which additionally satisfies $v(\cdot) \in C([0,T];\mathbb{E}_{0})$ and the integral with $\mathcal{F}$ is understood in the sense of \nameref{DESC:QF}.
	
	Moreover, $\alpha_{2}>0$ is equivalent to $\alpha_{3}>0$.
\end{theorem}

Theorem \ref{TH: OptimalProcessDelay} covers the non-singular versions of the Frequency Theorem from \cite{Likhtarnikov1976} and \cite{Likhtarnikov1977} as well as the recently established in \cite{Anikushin2020FreqParab} version which is appropriate for semilinear parabolic equations. Note also that the paper of J.-Cl.~Louis and D.~Wexler \cite{LouisWexler1991} is a rediscovery of the main result from \cite{Likhtarnikov1977}. We note that in \cite{Likhtarnikov1976,Likhtarnikov1977,LouisWexler1991} it is also studied the regularity of optimal controls and their expression in a closed form. Such results usually require additional regularity assumptions and are out of our interest, at least in the case of stationary problems (i.e. with constant $A$, $B$ and $\mathcal{F}$). 

Although the proofs of Theorems \ref{TH: OptimalProcessDelay} and \ref{TH: FreqThRelaxedDelay} in most follow similar arguments as in our paper \cite{Anikushin2020FreqParab}, but applied in the wider context, we present them in Section \ref{SEC: ProofsFreqTheorem} for the sake of completeness and, that is more important, to emphasize the role of the quadratic form $\mathcal{F}$ defined on $\mathbb{E} \times \Xi$ with some intermediate Banach space $\mathbb{E}$ as in \nameref{DESC:RES}. This is essential for the study of delay equations presented in further sections.

In our work \cite{Anikushin2020Geom}, we use quadratic Lyapunov functionals to construct invariant manifolds (such as inertial manifolds or local stable/unstable/slow manifolds along invariant sets) and the corresponding foliations for nonautonomous dynamical systems given by abstract asymptotically compact cocycles in Banach spaces. Our approach allows to treat and improve many scattered results in the field (some of which will be discussed below) within a unified geometric context. As to improvements, besides the ones discussed below, note that we do not know any papers concerned with inertial manifolds for parabolic equations with nonlinear boundary conditions.

In applications, the inequality $\alpha_{3} > 0$, which is called a \textit{frequency inequality} or \textit{frequency-domain condition}, is useful. In concrete situations, it takes well-known forms and provides optimal conditions (see our Remark \ref{REM: NDEoptimalityFreqIneq} below and Subsection 5.2 in \cite{Anikushin2020FreqParab}). For example, for semilinear parabolic equations with a self-adjoint linear part, it takes the optimal form of the Spectral Gap Condition (see \cite{Anikushin2020FreqParab}), the optimal version of which was firstly established by A.V.~Romanov \cite{Romanov1994} and M.~Miclav\v{c}i\v{c} \cite{Miclavcic1991} by using different methods. For non-self-adjoint linear parts, it is related to the inequality from \cite{Miclavcic1991}. Namely, the conditions of M.~Miclav\v{c}i\v{c} are not quadratic in general, but our theory allows to consider their quadratic analogs. We refer to our papers \cite{Anikushin2020FreqParab,Anikushin2020Geom} for more discussions in the context of semilinear parabolic problems.

Moreover, in applications to nonlinear systems, the inequality in \eqref{EQ: RelaxedFreqThLyapunivInequality} implies (see Theorem \ref{TH: QuadraticLyapunovFuncDelay}) a condition similar to the so-called strong cone property in a differential form (see the survey of S.~Zelik \cite{Zelik2014} or A.~Kostianko et al. \cite{KostiankoZelikSA2020}), although the class of cones in our case is more general and appropriate for the study of non-self-adjoint problems, among which delay equations occupy a special place. We note that from the applications perspective, the Frequency Theorem justifies that quadratic cones are more natural for the construction of inertial manifolds, at least in the cases covered by quadratic constraints. This may possibly explain why the attempt of N.~Koksch and S.~Siegmund \cite{KokschSiegmund2002}, who used a special class of non-quadratic cones to generalize inertial manifolds theory, did not succeed in applications. Although, it is interesting to generalize the Frequency Theorem and our theory from \cite{Anikushin2020Geom} for the case of general non-quadratic cones, for example, to construct such cones under the ``non-quadratic'' conditions of M.Miclav\v{c}i\v{c} \cite{Miclavcic1991}. This problem follows the philosophy we call ``the dialectics of the Lyapunov-Perron and Hadamard methodologies''. It is particularly concerned with a kind of equivalence between these two methods from the theory of invariant manifolds, but in a much broader context than it is usually considered (see the introduction in \cite{Anikushin2020Geom} for a discussion).

In \cite{Anikushin2020Red} we posed a problem concerned with applications of the Frequency Theorem from \cite{Likhtarnikov1977} to delay equations. Namely, in \cite{Likhtarnikov1977} it is required that the form $\mathcal{F}$ is bounded in $\mathbb{H} \times \Xi$ and, consequently, the results therein cannot be applied for delay equations (for which $\mathbb{H}$ is a kind of $L_{2}$-space) and quadratic forms containing $\delta$-functionals (which are unbounded in $L_{2}$). It turns out that the presented relaxation given by assumptions \nameref{DESC:REG1} and \nameref{DESC:REG2} is sufficient to study a large class of such equations. Namely, we will show that main parts of solutions to linear inhomogeneous problems associated with neutral delay operators belong to what we call spaces of agalmanated functions (see Theorem \ref{TH: DelayRegCond}). Such functions are given by the sum of what we call adorned and twisted functions and both functional classes are turn out to be linearly independent. We call the corresponding decomposition of the main part of a solution into the sum of such functions a \textit{structural Cauchy formula}. Such a decomposition reveals some structural (not to be confused with regularity) properties of solutions. Namely, in Section \ref{SEC: LemmaOnEstimatesForLF} we show that, on such spaces, operators given by pointwise measurements of certain unbounded functionals can be defined and understood in the integral sense. In Section \ref{SEC: NeutralDelayEquationsApplications}, these operators arise in the study of solutions and quadratic functionals, where \nameref{DESC:REG2} just follows from their boundedness. Here we also use and develop some results of J.A.~Burns, T.L.~Herdman and H.W.~Stech \cite{BurnsHerdmanStech1983} on the well-posedness of linear neutral delay equations in Hilbert spaces.

In the case of delay equations, we should especially mention the works of R.A.~Smith on the Poincar\'{e}-Bendixson theory and convergence theorems \cite{Smith1992,Smith1980} and the results of Yu.A.~Ryabov \cite{Ryabov1967}, R.D.~Driver \cite{Driver1968SmallDelays} and C.~Chicone \cite{Chicone2003,Chicone2004} on inertial manifolds for delay equations in $\mathbb{R}^{n}$ with small delays and their extension to neutral equations recently obtained by S.~Chen and J.~Shen \cite{ChenShen2021}.  We will return to these works in Section \ref{SEC: NeutralDelayEquationsApplications}. In particular, we will show that Smith's frequency condition from \cite{Smith1992} leads to the existence of quadratic functionals\footnote{Note that R.A.~Smith used quadratic functionals and a ``pocket'' version of the Frequency Theorem in his works for ODEs \cite{Smith1987OrbStab}, but he probably abandoned this approach for infinite-dimensional systems due to inability of obtaining optimal conditions for their existence \cite{Smith1980}. Returning to this approach is our main motivation for this and adjacent works.} and in the case of small delays is similar (and sometimes more sharper) to the conditions used in \cite{Chicone2003}. This allows to cover and improve a part of the results from \cite{Chicone2003,Ryabov1967,Driver1968SmallDelays,ChenShen2021} (as well as from Smith's papers) with the aid of our work \cite{Anikushin2020Geom} (see Subsection \ref{SUBSEC: EquationsWithSmallDelays}). In general, Smith's frequency condition is applicable for general delay equations in $\mathbb{R}^{n}$ and allows to construct inertial manifolds whose dimension is not limited to $n$ as in the case of small delays. 

In addition, a simple, but nontrivial, example in the field of delay equations that goes beyond the scope of the mentioned results is given by the delayed oscillator proposed by M.J.~Suarez and P.S.~Schopf \cite{Suarez1988} as a simplified model for El~Ni\~{n}o Southern Oscillation (ENSO). It is described by a scalar delay equation as
\begin{equation}
	\label{EQ: SuarezSchopfExample}
	\dot{x}(t) = x(t) - \alpha x(t-\tau) - x^{3}(t),
\end{equation}
where $\alpha \in (0,1)$ and $\tau>0$ are parameters. In our paper \cite{Anikushin2021SS} (joint with A.O.~Romanov) we provided an analytical-numerical investigation based on the Frequency Theorem and our work \cite{Anikushin2020Geom} that indicates the existence of two-dimensional inertial manifolds in the model. Namely, analytically they exist in the ball of radius 1 for a range of parameters including $\alpha \in (0.5,1)$ and $\tau \in (1,1.96)$ and numerical experiments (in the absence of rigorous and sufficiently sharp estimates for the dissipativity region) suggest that the global attractor lies in the ball of radius 1 for many interested parameters. For such parameters the model may exhibit a multistable behavior, which went overlooked by previous investigations. Here hidden periodic orbits and homoclinics may appear and the influence of a small periodic forcing may lead to a low-dimensional chaos. This relates the model to periodically forced oscillators on the plane and may be essential to understand the modeled phenomenon. See \cite{Anikushin2021SS} and references therein for further discussions.

Another nontrivial application of the presented results is the studying of spectral gaps for compound cocycles generated by nonautonomous delay equations developed in our work \cite{Anikushin2023Comp}. Such cocycles can describe the evolution of infinitesimal volumes over global attractors and the exponential stability of such cocycles is related to their dimensional-like characteristics (see N.V.~Kuznetsov and V.~Reitmann \cite{KuzReit2020}; S.~Zelik \cite{ZelikAttractors2022}). In \cite{Anikushin2020Semigroups} we discuss why the problem of obtaining effective dimension estimates for delay equations cannot be studied by standard methods (namely, via the Liouville trace formula). Our results allow to obtain conditions for the uniform exponential stability of such cocycles by comparison with stationary problems. Here, frequency conditions cannot be verified analytically, since it requires solving first-order PDEs (in high-dimensional domains) with boundary conditions involving both partial derivatives and delays, but the problem seems to have the prospect of being studied by numerical methods. To the best of our knowledge, this is the only analytical method that can be used to study dimensional-like characteristics for a fairly general class of equations with delay.

Although here we are concentrated on applications of Theorem \ref{TH: FreqThRelaxedDelay} to delay equations in $\mathbb{R}^{n}$, we must note that Theorem \ref{TH: FreqThRelaxedDelay} can be applied to construct quadratic Lyapunov functionals for partial differential equations with delays. Here we should mention the work of L.~Boutet de Monvel, I.D.~Chueshov and A.V.~Rezounenko \cite{BoutetChueshovRezounenko1998} concerned with spectral gap conditions for the existence of inertial manifolds for semilinear parabolic equations with delay. It seems that the obtained in \cite{BoutetChueshovRezounenko1998} conditions are not sharp and our approach shall lead to sharper estimates for the existence of inertial manifolds. However, to apply our approach some preparatory work should be done. On this way, the monograph of I.D.~Chueshov \cite{Chueshov2015}, where the well-posedness of such problems in the space of continuous functions is studied, may be useful along with an extension of our approach from \cite{Anikushin2020Semigroups} to construct semigroups for such equations in appropriate Hilbert spaces.

In finite dimensions, the Frequency Theorem is known as the Kalman-Yakubovich-Popov (KYP) lemma. It has great success in the study of stability, periodicity, almost periodicity and dimension-like properties of ODEs. See, for example, the monographs of A.Kh.~Gelig, G.A.~Leonov and V.A.~Yakubovich \cite{Gelig1978} or N.V.~Kuznetsov and V.~Reitmann \cite{KuzReit2020} for a range of applications. Let us also mention our works \cite{Anikushin2019Vestnik,AnikushinRR2019,Anikushin2019SmithRed}, where quadratic cones constructed from the KYP lemma are used to study almost periodic ODEs. Note that the extension of these results (mainly the part of a quantitative nature) for infinite-dimensional systems requires further studying (in addition to the results of this paper).

There are few more works to mention. The optimal regulator problem (for some special cases of nonnegative forms $\mathcal{F}$) for delay equations was also considered by L.~Pandolfi \cite{Pandolfi1995}, who used an approach motivated by boundary control problems. Moreover, the regulator problem studied in the work of A.~Pritchard and D.~Salamon \cite{Pritchard1985} is also motivated by delay equations. However, as in \cite{Pandolfi1995}, they consider the case of a nonnegative form $\mathcal{F}$. Such a context does not allow to study problems of our interest, where indefinite quadratic functionals arise. Unbounded positive self-adjoint operator solutions to the Kalman-Yakubovich-Popov inequality in infinite dimensions were considered by D.Z.~Arov and O.J.~Staffans \cite{ArovStaffans2006} in order to study linear systems with a scattering supply rate. In \cite{Anikushin2021AADyn} we studied almost automorphic dynamics in almost periodic cocycles with one-dimensional inertial manifolds and conditions for the existence of such manifolds are derived via the Frequency Theorem obtained in the present work. In \cite{Anikushin2019+OnCom} we investigated compactness properties of the operator $P$ under certain regularity assumptions.

This paper is organized as follows. In Section \ref{SEC: ProofsFreqTheorem} we give a proof of Theorem \ref{TH: OptimalProcessDelay} (see Section \ref{SUBSEC: ProffDelayFTTheorem1}), Theorem \ref{TH: FreqThRelaxedDelay} (see Section \ref{SUBSEC: RelaxingControllability}) and establish several corollaries useful for applications. They are, in particular, concerned with the realification of $P$ and its sign properties (see Section \ref{SUBSEC: Realification}). In Section \ref{SEC: LemmaOnEstimatesForLF} we develop the theory of pointwise measurement operators on embracing spaces (see Section \ref{SUBSEC: EmbracingSpacesPointwiseMesOp}) and introduce the spaces of adorned (see Section \ref{SUBSEC: SpacedAdornedFunctions}), twisted (see Section \ref{SUBSEC: SpacesTwistedFunctions}) and agalmanated (see Section \ref{SUBSEC: AgalmanatedFunctionsNeutral}) functions which can be continuously embedded into the embracing spaces. Section \ref{SEC: NeutralDelayEquationsApplications} is devoted to applications for neutral delay equations in $\mathbb{R}^{n}$. In Section \ref{SUBSEC: NDELinearSystems} we study linear problems arising from such equations and, in particular, establish the structural Cauchy formula (see Theorem \ref{TH: DelayRegCond}). In Section \ref{SUBSEC: NDENonlinear} we transit to nonlinear systems and apply previous results to construct quadratic Lyapunov functionals for such problems (see Theorem \ref{TH: QuadraticLyapunovFuncDelay}). In Section \ref{SUBSEC: EquationsWithSmallDelays} we discuss connections with the works concerned with the existence of inertial manifolds for equations with small delays. In Section \ref{SEC: Discussion} a brief discussion on the presented and further research is given.
\section{Infinite-horizon quadratic regulator problem and quadratic Lyapunov-like functionals}
\label{SEC: ProofsFreqTheorem}
Most of the arguments used in this section are similar to our paper \cite{Anikushin2020FreqParab} and main ideas go back to the papers of A.L.~Likhtarnikov and V.A.~Yakubovich \cite{Likhtarnikov1976,Likhtarnikov1977}. The only difference is that they are applied in the wider context where some technical nuances arise.
\subsection{Proof of Theorem \ref{TH: OptimalProcessDelay}}
\label{SUBSEC: ProffDelayFTTheorem1}

Recall that we are dealing with the generator $A \colon \mathcal{D}(A) \subset \mathbb{H} \to \mathbb{H}$ of a $C_{0}$-semigroup $G(t)$, where $t \geq 0$, acting in a Hilbert space $\mathbb{H}$ and a control operator $B \in \mathcal{L}(\Xi;\mathbb{H})$ from a Hilbert space $\Xi$. With the pair $(A,B)$ we associate the control system given by \eqref{EQ: ControlSystem}.

Below, to the end of this subsection, we always suppose that the pair $(A,B)$ is $(L_{2},\mathbb{W})$-controllable in $\mathbb{E}_{0}$, where $\mathbb{E}_{0}$ is a Banach space and $\mathbb{W}$ is a Hilbert space which are continuously embedded into the main Hilbert space $\mathbb{H}$ and identified with their images under the embeddings such that the inclusions $\mathcal{D}(A) \subset \mathbb{E}_{0}$ and $\mathcal{D}(A) \subset \mathbb{W}$ from \eqref{EQ: AuxiliarySpacesEmbeddings} become bounded operators. Recall that the controllability assumption means that the affine subspace $\mathfrak{M}_{v_{0}}$ in $\mathcal{Z} = L_{2}(0,+\infty;\mathbb{W}) \times L_{2}(0,+\infty;\Xi)$ given by all processes $(v(\cdot),\xi(\cdot))$ through $v_{0}$, i.e. solution pairs from $\mathcal{Z}$ corresponding to the control system associated with the pair $(A,B)$ via \eqref{EQ: ControlSystem} such that $v(0)=v_{0}$, is not empty for each $v_{0} \in \mathbb{E}_{0}$.

Recall here the the space $\mathcal{Z}_{0}$ of processes over $\mathbb{E}_{0}$ given by the union of $\mathfrak{M}_{v_{0}}$ over all $v_{0} \in \mathbb{E}_{0}$. We endow $\mathcal{Z}_{0}$ with the norm $\|\cdot\|_{\mathcal{Z}_{0}}$ as in \eqref{EQ: SpaceZ0Norm}, i.e. $\|(v(\cdot),\xi(\cdot))\|_{\mathcal{Z}_{0}} = \| (v(\cdot),\xi(\cdot)) \|_{\mathcal{Z}} + \| v(0) \|_{\mathbb{E}_{0}}$ for all $(v(\cdot),\xi(\cdot)) \in \mathcal{Z}_{0}$.
\begin{lemma}
	\label{LEM: Z0IsBanach}
	We have that $\mathcal{Z}_{0}$ endowed with the norm $\|\cdot\|_{\mathcal{Z}_{0}}$ is a Banach space.
\end{lemma}
\begin{proof}
	Let $v_{k,0} \in \mathbb{E}_{0}$, where $k=1,2,\ldots$, be a sequence and suppose $z_{k} = (v_{k}(\cdot),\xi_{k}(\cdot)) \in \mathfrak{M}_{v_{k,0}}$ is a fundamental sequence in $\mathcal{Z}_{0}$. This is equivalent to that $v_{k,0}$ is fundamental in $\mathbb{E}_{0}$ and $z_{k}$ is fundamental in $\mathcal{Z}$. Since $\mathbb{E}_{0}$ and $\mathcal{Z}$ are Banach spaces, we have that $v_{k,0}$ converges in $\mathbb{E}_{0}$ to some $v_{0} \in \mathbb{E}_{0}$ and $z_{k}$ converges in $\mathcal{Z}$ to some $z = (v(\cdot),\xi(\cdot)) \in \mathcal{Z}$ as $k \to \infty$. We have to show that $z \in \mathfrak{M}_{v_{0}}$. Indeed, from the Cauchy formula \eqref{EQ: MildSolution} we have for all $t \geq 0$ that
	\begin{equation}
		\label{EQ: Z0BanachLemmaCauchyFormula}
		v_{k}(t) = G(t)v_{k,0} + \int_{0}^{t}G(t-s)B\xi_{k}(s)ds.
	\end{equation}
    Since $\xi_{k}(\cdot)$ converges to $\xi(\cdot)$ in $L_{2}(0,\infty;\Xi)$ and $v_{k,0}$ converges to $v_{0}$ in the space $\mathbb{E}_{0}$ that is continuously embedded into $\mathbb{H}$, we may pass to the limit in \eqref{EQ: Z0BanachLemmaCauchyFormula} as $k \to \infty$ and obtain that
    \begin{equation}
    	v(t) = G(t)v_{0} + \int_{0}^{t}G(t-s)B\xi(s)ds \text{ for all } t \geq 0.
    \end{equation}
    Since we have $z \in \mathcal{Z}$ by definition, this shows that $z \in \mathfrak{M}_{v_{0}}$ and finishes the proof.
\end{proof}

From Lemma \ref{LEM: Z0IsBanach} we particularly have that the space $\mathfrak{M}_{0}$ of processes through zero is a closed subspace in $\mathcal{Z}$ and $\mathfrak{M}_{v_{0}}$ is a closed affine subspace in $\mathcal{Z}$. Clearly, $\mathfrak{M}_{v_{0}}$ is given by a proper translate of $\mathfrak{M}_{0}$. A bit stronger statement is contained in the following lemma.
\begin{lemma}
	\label{LEM: OperatorDDelay}
	There exists $D \in \mathcal{L}(\mathbb{E}_{0};\mathcal{Z}_{0})$ such that $\mathfrak{M}_{v_{0}} = \mathfrak{M}_{0} + Dv_{0}$ for all $v_{0} \in \mathbb{E}_{0}$.
\end{lemma}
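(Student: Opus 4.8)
The plan is to construct $D$ by a standard section/lifting argument: for each $v_0 \in \mathbb{E}$ the affine subspace $\mathfrak{M}_{v_0}$ is non-empty by $(L_2,\mathbb{H})$-controllability, and choosing any process through $v_0$ gives a translate of $\mathfrak{M}_0$; the only work is to make the choice depend \emph{linearly and boundedly} on $v_0$. First I would note that $\mathfrak{M}_0$ is a closed subspace of $\mathcal{Z}$ (clear from \eqref{EQ: MildSolution}, as already observed), so we may pass to the quotient Banach space $\mathcal{Z}/\mathfrak{M}_0$ with its quotient norm. Controllability says precisely that the map $v_0 \mapsto (\mathfrak{M}_{v_0} \bmod \mathfrak{M}_0)$ is a well-defined map $\mathbb{E} \to \mathcal{Z}/\mathfrak{M}_0$; from \eqref{EQ: MildSolution} and the fact that $\mathfrak{M}_{v_0} = \mathfrak{M}_0 + Dv_0$ whenever $Dv_0$ is \emph{any} single process through $v_0$, one checks this map is linear (superposition of mild solutions). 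Call it $\widetilde{D} \colon \mathbb{E} \to \mathcal{Z}/\mathfrak{M}_0$.

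The key step is to show $\widetilde{D}$ is bounded, and for this I would invoke the closed graph theorem. Suppose $v_0^{(n)} \to v_0$ in $\mathbb{E}$ and $\widetilde{D}v_0^{(n)} \to \zeta$ in $\mathcal{Z}/\mathfrak{M}_0$; I must show $\zeta = \widetilde{D}v_0$. Pick representatives $(v^{(n)}(\cdot),\xi^{(n)}(\cdot)) \in \mathfrak{M}_{v_0^{(n)}}$ whose classes converge to $\zeta$; lifting the quotient convergence we may choose these representatives so that $(v^{(n)},\xi^{(n)})$ itself converges in $\mathcal{Z}$ to some $(v(\cdot),\xi(\cdot))$ whose class is $\zeta$. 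Since $v_0^{(n)} = v^{(n)}(0)$ — here one uses that each $v^{(n)}$ is a genuine mild solution, hence continuous into $\mathbb{H}$ with value $v_0^{(n)}$ at $0$ — I need the convergence $v^{(n)} \to v$ in $L_2(0,+\infty;\mathbb{H})$ to force $v^{(n)}(0) \to v(0)$ in $\mathbb{H}$, i.e. that $(v,\xi)$ is a process through $v_0$. This is where the representation \eqref{EQ: MildSolution} does the real work: the map sending a pair $(v_0,\xi(\cdot))$ to the mild solution is continuous, and on $\mathfrak{M}_0 + \{\text{any lift}\}$ the initial value is recovered continuously; concretely, restricting to a finite interval $[0,T]$, $v^{(n)} \to v$ in $L_2(0,T;\mathbb{H})$ together with $\xi^{(n)} \to \xi$ in $L_2(0,T;\Xi)$ and \eqref{EQ: MildSolution} give $v^{(n)}(0) \to v(0)$. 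Hence $v(0) = v_0$, so $\zeta = \widetilde{D}v_0$ and the graph is closed; therefore $\widetilde{D} \in \mathcal{L}(\mathbb{E};\mathcal{Z}/\mathfrak{M}_0)$.

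Finally I would lift $\widetilde{D}$ to a bounded operator $D \colon \mathbb{E} \to \mathcal{Z}$ with values in $\mathcal{Z}$ rather than the quotient. In general a bounded operator into a quotient need not lift to a bounded \emph{linear} operator into the whole space, but here $\mathfrak{M}_0$ is a closed subspace of the Hilbert space $\mathcal{Z}$, so it is complemented: let $\mathfrak{M}_0^{\perp}$ be its orthogonal complement and $\Pi \colon \mathcal{Z} \to \mathfrak{M}_0^{\perp}$ the orthogonal projection, which restricts to an isometric isomorphism $\mathcal{Z}/\mathfrak{M}_0 \cong \mathfrak{M}_0^{\perp}$. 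Composing $\widetilde{D}$ with the inverse of this isomorphism and the inclusion $\mathfrak{M}_0^{\perp} \subset \mathcal{Z}$ yields $D \in \mathcal{L}(\mathbb{E};\mathcal{Z})$ with $Dv_0 \in \mathfrak{M}_{v_0}$ and $\mathfrak{M}_{v_0} = \mathfrak{M}_0 + Dv_0$ for all $v_0 \in \mathbb{E}$, as required. I expect the main obstacle to be the closed-graph verification in the second paragraph — specifically, being careful that $L_2$-convergence of mild solutions, combined with \eqref{EQ: MildSolution}, genuinely controls the pointwise initial value $v^{(n)}(0)$; everything else (linearity, the Hilbert-space complementation lifting) is routine.
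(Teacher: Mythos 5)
Your proposal is correct and is essentially the paper's own argument: the paper defines $Dv_{0} := z - \Pi z$ for any process $z$ through $v_{0}$, where $\Pi$ is the orthogonal projector onto $\mathfrak{M}_{0}$ (which is exactly your lift of the quotient map through the isometry $\mathcal{Z}/\mathfrak{M}_{0} \cong \mathfrak{M}_{0}^{\perp}$), and then invokes the closed graph theorem via \eqref{EQ: MildSolution}. The only difference is that you spell out the closed-graph verification that the paper dismisses as clear, which is a harmless elaboration.
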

\begin{proof}
	Indeed, suppose $v_{0} \in \mathbb{E}_{0}$ is given and let $z \in \mathfrak{M}_{v_{0}}$ be any process through $v_{0}$. We define $Dv_{0} := z - \Pi z$, where $\Pi \colon \mathcal{Z} \to \mathfrak{M}_{0}$ is the orthogonal (in $\mathcal{Z}$) projector onto $\mathfrak{M}_{0}$. Note that $z_{1}-z_{2} \in \mathfrak{M}_{0}$ for any $z_{1},z_{2} \in \mathfrak{M}_{v_{0}}$ and, therefore, the definition of $Dv_{0} \in \mathfrak{M}_{v_{0}}$ is independent on the choice of $z \in \mathfrak{M}_{v_{0}}$.
	
	Let us show that $D \colon \mathbb{E}_{0} \to \mathcal{Z}_{0}$ is closed. Indeed, let a sequence $v_{k} \in \mathbb{E}_{0}$, where $k=1,2,\ldots$, converge in $\mathbb{E}_{0}$ to some $v_{0} \in \mathbb{E}_{0}$ and $Dv_{k}$ converge in $\mathcal{Z}_{0}$ to some $z \in \mathcal{Z}_{0}$ as $k \to \infty$. In particular, we have $z \in \mathfrak{M}_{v_{0}}$. Moreoever, since $0=\Pi Dv_{k} \to \Pi z$ as $k \to \infty$, we must also have $\Pi z = 0$ and, consequently, $Dv_{0} = z - \Pi z = z$ that shows $D$ is closed. 
	
	Since $\mathbb{E}_{0}$ and $\mathcal{Z}_{0}$ are Banach spaces (see Lemma \ref{LEM: Z0IsBanach}), from the Closed Graph Theorem we obtain that $D \in \mathcal{L}(\mathbb{E}_{0};\mathcal{Z}_{0})$. The proof is finished.
\end{proof}

For the next lemma, we recall the formally defined quadratic functional $\mathcal{J}_{\mathcal{F}}$ associated with the bounded quadratic form $\mathcal{F}$ on $\mathcal{D}(A) \times \Xi$ via \eqref{EQ: QuadraticFunctional}. In this context, assumption \nameref{DESC:QF} gives an interpretation of $\mathcal{J}_{\mathcal{F}}$ as a bounded quadratic functional on the space $\mathcal{Z}_{0}$ of processes over $\mathbb{E}_{0}$. Then $\alpha_{1} > 0$, where $\alpha_{1}$ is given by \eqref{EQ: Alpha1}, means exactly that $\mathcal{J}_{\mathcal{F}}$ is coercive on $\mathfrak{M}_{0}$. The following lemma is a generalization of the corresponding lemmas from \cite{Anikushin2020FreqParab,Likhtarnikov1977}.
\begin{lemma}
	\label{LEM: OptimizationOnAffineSubspaces}
	Suppose $\alpha_{1} > 0$ and let \nameref{DESC:QF} be satisfied. Then for every $v_{0} \in \mathbb{E}_{0}$ there exists a unique minimum $(v^{0}(\cdot;v_{0}), \xi^{0}(\cdot;v_{0}))$ of $\mathcal{J}_{\mathcal{F}}$ on $\mathfrak{M}_{v_{0}}$. Moreover, there exists $T \in \mathcal{L}(\mathbb{E}_{0};\mathcal{Z}_{0})$ such that $(v^{0}(\cdot;v_{0}), \xi^{0}(\cdot;v_{0})) = T v_{0}$ for all $v_{0} \in \mathbb{E}_{0}$.
\end{lemma}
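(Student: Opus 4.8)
The plan is to realize the minimization of $\mathcal{J}_{\mathcal{F}}$ on the affine subspace $\mathfrak{M}_{v_{0}}$ as a quadratic optimization problem on the fixed Hilbert space $\mathfrak{M}_{0}$, exploiting the decomposition $\mathfrak{M}_{v_{0}} = \mathfrak{M}_{0} + Dv_{0}$ from Lemma \ref{LEM: OperatorDDelay}. Fixing $v_{0} \in \mathbb{E}$ and writing a generic process in $\mathfrak{M}_{v_{0}}$ as $z = w + Dv_{0}$ with $w \in \mathfrak{M}_{0}$, the functional becomes $\mathcal{J}_{\mathcal{F}}(w + Dv_{0}) = \mathcal{J}_{\mathcal{F}}(w) + 2\operatorname{Re}\,\Phi(w, Dv_{0}) + \mathcal{J}_{\mathcal{F}}(Dv_{0})$, where $\Phi$ is the symmetric bilinear form associated with $\mathcal{J}_{\mathcal{F}}$. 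The first point to check is that $\mathcal{J}_{\mathcal{F}}$, which is a priori only defined and continuous on $\mathcal{Z}_{0}$ by \textbf{(QF)} and \textbf{(REG)}, restricts to a bounded quadratic form on $\mathfrak{M}_{0}$: here elements $w = (v(\cdot),\xi(\cdot)) \in \mathfrak{M}_{0}$ have $v(0) = 0$, so \textbf{(QF)} gives $\int |\mathcal{F}_{2}v|^{2} \leq C_{2}\|v\|^{2}_{\mathcal{Z}_{1}}$ and $\int |\mathcal{F}_{4}v|^{2} \leq C_{4}\|v\|^{2}_{\mathcal{Z}_{1}}$, hence $|\mathcal{J}_{\mathcal{F}}(w)| \leq C \|w\|^{2}_{\mathcal{Z}}$ by the Cauchy–Schwarz inequality and boundedness of $\mathcal{F}_{1},\mathcal{F}_{3},\mathcal{F}_{5}$. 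Likewise the cross term $\Phi(w, Dv_{0})$ is bounded by $C\|w\|_{\mathcal{Z}}\|Dv_{0}\|_{\mathcal{Z}_{0}} \leq C'\|w\|_{\mathcal{Z}}\|v_{0}\|_{\mathbb{E}}$ using $D \in \mathcal{L}(\mathbb{E};\mathcal{Z}_{0})$, so $w \mapsto \operatorname{Re}\Phi(w, Dv_{0})$ is a bounded real-linear functional on $\mathfrak{M}_{0}$; denote it $\ell_{v_{0}}(w)$.

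Next I would invoke $\alpha_{1} > 0$ to get coercivity. By definition of $\alpha_{1}$ in \eqref{EQ: Alpha1}, for all $w = (v(\cdot),\xi(\cdot)) \in \mathfrak{M}_{0}$ we have $\mathcal{J}_{\mathcal{F}}(w) \geq \alpha_{1}(\|v\|^{2}_{\mathcal{Z}_{1}} + \|\xi\|^{2}_{\mathcal{Z}_{2}}) = \alpha_{1}\|w\|^{2}_{\mathcal{Z}}$. Thus the map $w \mapsto \mathcal{J}_{\mathcal{F}}(w) + 2\ell_{v_{0}}(w) + \mathcal{J}_{\mathcal{F}}(Dv_{0})$ is a bounded, coercive quadratic functional on the Hilbert space $\mathfrak{M}_{0}$ (with the $\mathcal{Z}$-norm); the bounded symmetric bilinear form defining it satisfies $\Phi(w,w) \geq \alpha_{1}\|w\|^{2}$, so by Lax–Milgram (or directly by strict convexity and lower semicontinuity) it has a unique minimizer $w^{0} = w^{0}(v_{0}) \in \mathfrak{M}_{0}$, characterized by the variational equation $\Phi(w^{0}, w) + \ell_{v_{0}}(w) = 0$ for all $w \in \mathfrak{M}_{0}$, equivalently $\Phi(w^{0} + Dv_{0}, w) = 0$ for all $w \in \mathfrak{M}_{0}$. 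The minimizer of $\mathcal{J}_{\mathcal{F}}$ on $\mathfrak{M}_{v_{0}}$ is then $(v^{0}(\cdot,v_{0}),\xi^{0}(\cdot,v_{0})) := w^{0}(v_{0}) + Dv_{0}$.

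It remains to show $T : v_{0} \mapsto w^{0}(v_{0}) + Dv_{0}$ is a bounded linear operator into $\mathcal{Z}_{0}$. Linearity follows from uniqueness: if $v_{0} = \lambda v_{0}' + v_{0}''$ then $\lambda w^{0}(v_{0}') + w^{0}(v_{0}'')$ satisfies the variational characterization for $v_{0}$ (using linearity of $D$ and of $\Phi$ in the relevant slot), so equals $w^{0}(v_{0})$ by uniqueness. For boundedness in the $\mathcal{Z}$-norm, test the variational equation with $w = w^{0}(v_{0})$: $\alpha_{1}\|w^{0}(v_{0})\|^{2}_{\mathcal{Z}} \leq \Phi(w^{0}(v_{0}),w^{0}(v_{0})) = -\ell_{v_{0}}(w^{0}(v_{0})) \leq C'\|w^{0}(v_{0})\|_{\mathcal{Z}}\|v_{0}\|_{\mathbb{E}}$, giving $\|w^{0}(v_{0})\|_{\mathcal{Z}} \leq (C'/\alpha_{1})\|v_{0}\|_{\mathbb{E}}$, and hence $\|Tv_{0}\|_{\mathcal{Z}} \leq \|w^{0}(v_{0})\|_{\mathcal{Z}} + \|Dv_{0}\|_{\mathcal{Z}} \leq C''\|v_{0}\|_{\mathbb{E}}$. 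Finally, $Tv_{0} \in \mathfrak{M}_{v_{0}} \subset \mathcal{Z}_{0}$ with $(Tv_{0})(0) = v_{0}$, so the $\mathcal{Z}_{0}$-norm of $Tv_{0}$ is $\|Tv_{0}\|_{\mathcal{Z}} + \|v_{0}\|_{\mathbb{E}} \leq (C''+1)\|v_{0}\|_{\mathbb{E}}$; thus $T \in \mathcal{L}(\mathbb{E};\mathcal{Z}_{0})$. The main obstacle I anticipate is not the abstract Hilbert-space optimization, which is standard once coercivity is in hand, but rather the bookkeeping needed to verify that $\mathcal{J}_{\mathcal{F}}$ and its polar form genuinely extend to bounded forms on the relevant spaces using only \textbf{(QF)} and \textbf{(REG)} — in particular making sure the cross term and the $\mathcal{F}_{4}$-dependent term are controlled, since $\mathcal{F}_{2}$ and $\mathcal{F}_{4}$ are unbounded on $\mathbb{H}$; one must be careful that all estimates go through $\mathcal{Z}_{0}$ and the $v(0)=0$ simplification on $\mathfrak{M}_{0}$.
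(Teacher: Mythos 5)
Your proposal is correct and follows essentially the same route as the paper: reduce to the fixed Hilbert space $\mathfrak{M}_{0}$ via the operator $D$ from Lemma \ref{LEM: OperatorDDelay}, use \textbf{(QF)} (with the $v(0)=0$ simplification on $\mathfrak{M}_{0}$) to see that $\mathcal{J}_{\mathcal{F}}$ and its polar form are bounded on the relevant spaces, obtain coercivity from $\alpha_{1}>0$, and solve the resulting variational equation by Lax--Milgram. The only cosmetic difference is that the paper packages the linear part through a Riesz-representation operator $Q$ and inverts its restriction $Q_{0}$, whereas you work directly with the bilinear form and get the bound $\|w^{0}(v_{0})\|_{\mathcal{Z}}\leq(C'/\alpha_{1})\|v_{0}\|_{\mathbb{E}}$ by testing the Euler equation with $w^{0}(v_{0})$ itself, which is a slightly more streamlined way to the same conclusion.
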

\begin{proof}
	Let us take $v_{0} \in \mathbb{E}_{0}$ and consider two processes $( v_{0}(\cdot),\xi_{0}(\cdot) ), ( v(\cdot),\xi(\cdot) ) \in \mathfrak{M}_{v_{0}}$. Put $h_{v}(\cdot):=v(\cdot)-v_{0}(\cdot)$ and $h_{\xi}:= \xi(\cdot) - \xi_{0}(\cdot)$. Note that $(h_{v},h_{\xi}) \in \mathfrak{M}_{0}$. We have
	\begin{equation}
	\begin{split}
	\mathcal{J}_{\mathcal{F}}(v_{0}(\cdot)+h_{v}(\cdot),\xi_{0}(\cdot)+h_{\xi}(\cdot)) - \mathcal{J}_{\mathcal{F}}( v_{0}(\cdot), \xi_{0}(\cdot) ) =\\= 2\operatorname{Re}\mathcal{L}_{\mathcal{W}}( (h_{v}(\cdot),h_{\xi}(\cdot)) , (v_{0}(\cdot),\xi_{0}(\cdot)) ) + \mathcal{J}_{\mathcal{F}}(h_{v}(\cdot),h_{\xi}(\cdot)),
	\end{split}	
	\end{equation}
	where $\mathcal{W}$ is the symmetric sesquilinear form corresponding to $\mathcal{F}$ and
	\begin{equation}
		\label{EQ: LinearPartIntegralFunctional}
		\mathcal{L}_{\mathcal{W}}\left( (h_{v}(\cdot),h_{\xi}(\cdot)) , (v_{0}(\cdot),\xi_{0}(\cdot)) \right) = \int_{0}^{+\infty}\mathcal{W}((h_{v}(t),h_{\xi}(t) ),(v_{0}(t),\xi_{0}(t)))dt,
	\end{equation} 
	where the integral should be understood according to \nameref{DESC:QF}.
	
	Standard arguments show that the necessary and sufficient conditions for $\mathcal{J}_{\mathcal{F}}$ to attain a minimum at $(v_{0}(\cdot), \xi_{0}(\cdot))$ are given by
	\begin{equation}
	\label{EQ: OrthogonalityCond}
	\mathcal{L}_{\mathcal{W}}\left( (h_{v}(\cdot),h_{\xi}(\cdot)) , (v_{0}(\cdot),\xi_{0}(\cdot)) \right) = 0 \text{ for all } (h_{v},h_{\xi}) \in \mathfrak{M}_{0}
	\end{equation}
	and
	\begin{equation}
	\label{EQ: FormCond}
	\mathcal{J}_{\mathcal{F}}( h_{v}(\cdot),h_{\xi}(\cdot) ) \geq 0 \text{ for all } (h_{v},h_{\xi}) \in \mathfrak{M}_{0}.
	\end{equation}

	Now we put $h(\cdot) := (h_{v}(\cdot),h_{\xi}(\cdot)) \in \mathfrak{M}_{0}$. Then, by \nameref{DESC:QF}, for any $v_{0} \in \mathbb{E}_{0}$ and $z(\cdot) = ( v_{0}(\cdot),\xi_{0}(\cdot) ) \in \mathfrak{M}_{v_{0}}$ the left-hand side of \eqref{EQ: OrthogonalityCond} defines a continuous linear functional on $\mathfrak{M}_{0}$. Therefore, by the Riesz representation theorem, there exists a unique element $Qz \in \mathfrak{M}_{0}$ such that
	\begin{equation}
	\label{EQ: RieszRepresentEquality}
	\mathcal{L}_{\mathcal{W}}(h(\cdot),z(\cdot)) = (h(\cdot),Qz(\cdot))_{\mathcal{Z}} \text{ for all } h(\cdot) \in \mathfrak{M}_{0}.
	\end{equation}
	Clearly, $Q$ is a linear operator from $\mathcal{Z}_{0}$ to $\mathfrak{M}_{0}$ and it is bounded since for some $C>0$ we have
	\begin{equation}
		\| Qz \|^{2}_{\mathfrak{M}_{0}} = ( Qz(\cdot), Qz(\cdot) )_{\mathcal{Z}} = \mathcal{L}_{\mathcal{W}}(Qz(\cdot),z(\cdot)) \leq C \| Qz\|_{\mathcal{Z}_{0}} \cdot \| z \|_{\mathcal{Z}_{0}} = C  \| Qz\|_{\mathfrak{M}_{0}} \cdot \| z \|_{\mathcal{Z}_{0}}.
	\end{equation}
	In virtue of Lemma \ref{LEM: OperatorDDelay}, we have $\mathfrak{M}_{v_{0}} = \mathfrak{M}_{0} + Dv_{0}$ and, consequently, any $z \in \mathfrak{M}_{v_{0}}$ can be written as $z = z_{0} + Dv_{0}$ for some $z_{0} \in \mathfrak{M}_{0}$. Therefore, for such $z$ the ``orthogonality condition'' from \eqref{EQ: OrthogonalityCond} can be written as
	\begin{equation}
	(h,Qz_{0} + QDv_{0})_{\mathcal{Z}} = 0 \text{ for all } h \in \mathfrak{M}_{0}, 
	\end{equation}
	or, equivalently, as
	\begin{equation}
	\label{EQ: ProjectedEquationMinima}
	Qz_{0} = -QDv_{0}.
	\end{equation}
	Since $Q$ is bounded, its restriction $Q_{0} := \restr{Q}{\mathfrak{M}_{0}} \colon \mathfrak{M}_{0} \to \mathfrak{M}_{0}$ is bounded. From \eqref{EQ: RieszRepresentEquality} it is clear that
	\begin{equation}
	\inf_{z \in \mathfrak{M}_{0}} \frac{(z,Q_{0}z)_{\mathcal{Z}}}{(z,z)_{\mathcal{Z}}} = \alpha_{1} > 0.
	\end{equation}
	In particular, $Q_{0}$ is a bounded self-adjoint operator in $\mathfrak{M}_{0}$ and its quadratic form is coercive. Therefore, the Lax-Milgram theorem guarantees that $Q_{0}^{-1}$ is everywhere defined and bounded. So, \eqref{EQ: ProjectedEquationMinima} has a unique solution $z_{0} = -Q_{0}^{-1} Q D v_{0} $. Since \eqref{EQ: FormCond} is also satisfied, the optimal process is now given by 
	\begin{equation}
		\label{EQ: OptimalControlOperatorDefinition}
		z = -Q_{0}^{-1}QDv_{0} + Dv_{0} =: Tv_{0}
	\end{equation}
	Clearly, $T$ is a bounded linear operator from $\mathbb{E}_{0}$ to $\mathcal{Z}_{0}$. The proof is finished.
\end{proof}

The next lemma shows that the optimal cost is given by a quadratic form in $\mathbb{E}_{0}$.
\begin{lemma}
	\label{LEM: OperatorP}
	Let the assumptions of Lemma \ref{LEM: OptimizationOnAffineSubspaces} hold. Then there exists $P \in \mathcal{L}(\mathbb{E}_{0};\mathbb{E}^{*}_{0})$ such that for all $v_{0} \in \mathbb{E}_{0}$ we have
	\begin{equation}
	\langle v_{0}, Pv_{0} \rangle = \mathcal{J}_{\mathcal{F}}( v^{0}(\cdot;v_{0}), \xi^{0}(\cdot;v_{0}) ),
	\end{equation}
	where $(v^{0}(\cdot;v_{0}), \xi^{0}(\cdot;v_{0}) \in \mathfrak{M}_{v_{0}}$ is the unique optimal process through $v_{0}$. Moreover, $P$ is symmetric in the sense that $\langle v, Pw \rangle = \overline{ \langle w, Pv \rangle }$ for all $v,w \in \mathbb{E}_{0}$.
\end{lemma}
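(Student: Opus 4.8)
The plan is to realise the map $v_{0} \mapsto \mathcal{J}_{\mathcal{F}}(v^{0}(\cdot,v_{0}),\xi^{0}(\cdot,v_{0}))$ as the diagonal of a bounded Hermitian sesquilinear form on $\mathbb{E}$ and then to represent that form by an operator into $\mathbb{E}^{*}$.

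First I would record that $\mathcal{J}_{\mathcal{F}}$ is the diagonal of a bounded Hermitian sesquilinear form on $\mathcal{Z}_{0}$. Since $\mathcal{F}_{1},\mathcal{F}_{3},\mathcal{F}_{5}$ are self-adjoint, the form $\mathcal{F}$ from \eqref{EQ: QuadraticFormModified} is the diagonal of a Hermitian sesquilinear form on $\mathbb{E}\times\Xi$ obtained by polarization; integrating in $t$ gives a sesquilinear form $\mathcal{J}_{\mathcal{F}}[\,\cdot\,,\,\cdot\,]$ with $\mathcal{J}_{\mathcal{F}}[z,z]=\mathcal{J}_{\mathcal{F}}(z)$ for $z\in\mathcal{Z}_{0}$. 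Its boundedness with respect to the norm \eqref{EQ: SpaceZ0Norm} is precisely the estimate already used in the proof of Lemma \ref{LEM: OptimizationOnAffineSubspaces}: the $\mathcal{F}_{1}$- and $\mathcal{F}_{3}$-terms are controlled by $\|\cdot\|_{\mathcal{Z}}$, while the cross term with $\mathcal{F}_{2}$ and the term with $\mathcal{F}_{4}$ (which are only $\mathbb{E}$-bounded) are controlled via Cauchy–Schwarz by \textbf{(QF)} and \textbf{(REG)}. Equivalently, one may simply note that $\mathcal{J}_{\mathcal{F}}$ is a continuous quadratic functional on the Banach space $\mathcal{Z}_{0}$ and define $\mathcal{J}_{\mathcal{F}}[\,\cdot\,,\,\cdot\,]$ through the polarization identity.

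Next, let $T\in\mathcal{L}(\mathbb{E};\mathcal{Z}_{0})$ be the operator from Lemma \ref{LEM: OptimizationOnAffineSubspaces}, so that $Tv_{0}=(v^{0}(\cdot,v_{0}),\xi^{0}(\cdot,v_{0}))$. I would set
\begin{equation}
\Phi(v,w) := \mathcal{J}_{\mathcal{F}}[Tv,Tw], \qquad v,w\in\mathbb{E}.
\end{equation}
Since $T$ is bounded and $\mathcal{J}_{\mathcal{F}}[\,\cdot\,,\,\cdot\,]$ is a bounded Hermitian sesquilinear form on $\mathcal{Z}_{0}$, the form $\Phi$ is a bounded Hermitian sesquilinear form on $\mathbb{E}\times\mathbb{E}$ with $\Phi(v,v)=\mathcal{J}_{\mathcal{F}}(Tv)=\mathcal{J}_{\mathcal{F}}(v^{0}(\cdot,v),\xi^{0}(\cdot,v))$. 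Then, for fixed $v\in\mathbb{E}$, the map $u\mapsto\Phi(v,u)$ is a bounded functional on $\mathbb{E}$, so by the rigging $\mathbb{E}\subset\mathbb{H}\subset\mathbb{E}^{*}$ it is represented by an element $Pv\in\mathbb{E}^{*}$ with $\langle u,Pv\rangle=\Phi(v,u)$ for all $u\in\mathbb{E}$. Linearity of $v\mapsto Pv$ and the bound $\|Pv\|_{\mathbb{E}^{*}}\le\|\Phi\|\,\|v\|_{\mathbb{E}}$ follow from the sesquilinearity and boundedness of $\Phi$, so $P\in\mathcal{L}(\mathbb{E};\mathbb{E}^{*})$. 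Finally $\langle v,Pv\rangle=\Phi(v,v)=\mathcal{J}_{\mathcal{F}}(v^{0}(\cdot,v),\xi^{0}(\cdot,v))$, and for $u,w\in\mathbb{E}$ the Hermitian property gives $\langle u,Pw\rangle=\Phi(w,u)=\overline{\Phi(u,w)}=\overline{\langle w,Pu\rangle}$, i.e. $P$ is symmetric.

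The only genuinely substantive step is the first one: that $\mathcal{J}_{\mathcal{F}}$ actually extends to a \emph{bounded} Hermitian sesquilinear form on $\mathcal{Z}_{0}$ (and not merely takes finite values), which rests on \textbf{(QF)} and \textbf{(REG)} exactly as in Lemma \ref{LEM: OptimizationOnAffineSubspaces}. Everything afterwards is the routine passage from a bounded quadratic form on a Banach space to its representing operator, together with bookkeeping of the conjugations in the complex setting.
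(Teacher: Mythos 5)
Your proposal is correct and follows essentially the same route as the paper: the paper likewise polarizes $\mathcal{J}_{\mathcal{F}}$ into the sesquilinear form $\int_{0}^{\infty}L(\cdot\,;\cdot)\,dt$, uses its continuity on $\mathcal{Z}_{0}$ (via \textbf{(QF)}) together with the boundedness of $T$ from Lemma \ref{LEM: OptimizationOnAffineSubspaces}, and represents the resulting bounded form on $\mathbb{E}$ by $P\in\mathcal{L}(\mathbb{E};\mathbb{E}^{*})$, with symmetry coming from $L(v;w)=\overline{L}(w;v)$. The only cosmetic difference is that the paper first builds $\widetilde{P}\in\mathcal{L}(\mathcal{Z}_{0};\mathbb{E}^{*})$ and then sets $P=\widetilde{P}T$, whereas you compose with $T$ in both arguments before representing the form.
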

\begin{proof}
	Let $z=z(\cdot) \in \mathcal{Z}_{0}$ be fixed. Consider the map
	\begin{equation}
	\label{EQ: LinearBanachFunctional}
	\mathbb{E}_{0} \ni v_{0} \mapsto \mathcal{L}_{\mathcal{W}}(Tv_{0},z),
	\end{equation}
	where $T$ and $\mathcal{L}_{\mathcal{W}}$ are defined in \eqref{EQ: OptimalControlOperatorDefinition} and \eqref{EQ: LinearPartIntegralFunctional} respectively. Clearly, \eqref{EQ: LinearBanachFunctional} defines a continuous linear functional on $\mathbb{E}_{0}$ which we denote by $\widetilde{P}z$. Then $\widetilde{P}$ is a linear operator\footnote{Recall that the multiplication by a scalar in $\mathbb{E}^{*}_{0}$ is given by the multiplication of functional's values by scalar's complex-conjugate.} from $\mathcal{Z}_{0}$ to $\mathbb{E}^{*}_{0}$. Since $\mathcal{L}_{\mathcal{W}}$ is a bounded symmetric sesquilinear form on $\mathcal{Z}_{0}$ due to \nameref{DESC:QF}, $\widetilde{P}$ is bounded. Thus, for $z=Tv_{0}$ we have
	\begin{equation}
	\langle v_{0}, \widetilde{P} T v_{0} \rangle = \mathcal{L}_{\mathcal{W}}(Tv_{0} , Tv_{0}) = \mathcal{J}_{\mathcal{F}}( v^{0}(\cdot;v_{0}), \xi^{0}(\cdot;v_{0}) ).
	\end{equation}
	Therefore, $P:= \widetilde{P} T$ satisfies the required property. The symmetricity $\langle v, Pw \rangle = \overline{ \langle w, Pv \rangle }$ follows from the symmetricity of $\mathcal{L}_{\mathcal{W}}$. The proof is finished.
\end{proof}

Now we recall the values $\alpha_{2}$ from \eqref{EQ: Alpha2} and $\alpha_{3}$ from \eqref{EQ: Alpha3}. For convenience, remind that $\alpha_{2} \in \mathbb{R}$ is the best possible (i.e. the exact upper bound) constant such that the inequality $\mathcal{F}(v,\xi) \geq \alpha_{2}(|v|^{2}_{\mathbb{W}} + |\xi|^{2}_{\Xi})$ is valid for all $v \in \mathcal{D}(A)$, $\xi \in \Xi$ and $\omega \in \mathbb{R}$ such that $i\omega v = Av + B\xi$. Remind also that $\alpha_{3} \in \mathbb{R}$ is determined similarly from the inequality $\mathcal{F}(-(A-i\omega I)^{-1}B\xi,\xi) \geq \alpha_{3} |\xi|^{2}_{\Xi}$ required to be satisfied for all $\xi \in \Xi$ and $\omega \in \mathbb{R}$. Note that $\alpha_{3}$ is considered under the condition that the operator $A$ does not have spectrum on the imaginary axis and the operators $(A-i\omega I)^{-1}B$ are bounded in the norm of $\mathcal{L}(\Xi;\mathbb{H})$ uniformly in $\omega \in \mathbb{R}$ and in this case we say that $\alpha_{3}$ is well-defined.

The following lemma is an analog of Lemma 2 from \cite{Likhtarnikov1976}. Note that the proof of its second part is based on the use of the Fourier transform in $L_{2}$. Along with the first part it makes Theorem \ref{TH: OptimalProcessDelay} and its consequences very useful in applications and justifies the name ``Frequency Theorem'' coined by V.A.~Yakubovich.
\begin{lemma}
	\label{EQ: AlphaEquivalenceLemma}
	1). Suppose that $\alpha_{3}$ from \eqref{EQ: Alpha3} is well-defined. Then $\alpha_{3}>0$ if and only if $\alpha_{2} > 0$.
	
	2). Under \nameref{DESC:FT} $\alpha_{2}>0$ implies $\alpha_{1}>0$.
\end{lemma}
\begin{proof}
	1). It is obvious that $\alpha_{2} \leq \alpha_{3}$. So, $\alpha_{2}>0$ implies $\alpha_{3} > 0$. Put $\beta := 1 + \sup_{\omega \in \mathbb{R}} \|(A-i\omega)^{-1}B\|$, where the norm is taken in $\mathcal{L}(\Xi;\mathbb{W})$. It is clear that $\alpha_{2} \geq \beta^{-1} \alpha_{3}$ that proves item 1) of the lemma.
	
	2). Let us extend any process $(v(\cdot),\xi(\cdot)) \in \mathfrak{M}_{0}$ to the negative semi-axis $(-\infty,0]$ by zero and consider its Fourier transform denoted by $(\hat{v}(\cdot), \hat{\xi}(\cdot) ) \in L_{2}(\mathbb{R};\mathbb{H} \times \Xi)$. Note that since $v(\cdot) \in L_{2}(0,+\infty;\mathbb{W})$ and the embedding $\mathbb{W} \subset \mathbb{H}$ is continuous, we also have $\hat{v}(\cdot) \in L_{2}(\mathbb{R};\mathbb{W})$ and the Parseval identity in that space. By Lemma 11 from \cite{LouisWexler1991}, we have $\hat{v}(\omega) \in \mathcal{D}(A)$ for almost all $\omega \in \mathbb{R}$ and
	\begin{equation}
	\label{EQ: FourierTransformEquation}
	i\omega \hat{v}(\omega) = A \hat{v}(\omega) + B\hat{\xi}(\omega).
	\end{equation}
	From \nameref{DESC:FT}, the definition of $\alpha_{2}$ and the Parseval identity we obtain
	\begin{equation}
	\label{EQ: FourierTransformFirstEquation}
	\begin{split}
	\mathcal{J}_{\mathcal{F}}(v(\cdot),\xi(\cdot)) = \int_{-\infty}^{+\infty}\mathcal{F}(\hat{v}(\omega),\hat{\xi}(\omega))d\omega \geq \alpha_{2} \int_{-\infty}^{+\infty} \left(|\hat{v}(\omega)|^{2}_{\mathbb{W}} + |\hat{\xi}(\omega)|^{2}_{\Xi}\right) = \alpha_{2} \int_{0}^{+\infty}\left(|v(t)|^{2}_{\mathbb{W}} + |\xi(t)|^{2}_{\Xi}\right)dt
	\end{split}
	\end{equation}
	that finishes the proof.
\end{proof}

Below we shall see that $\alpha_{1}>0$ implies $\alpha_{2}>0$. For this we have to establish the following lemma.

\begin{lemma}
	\label{LEM: ConstantDifferentiationProperty}
	Let $f(\cdot)$ be a twice continuously differentiable $\mathbb{H}$-valued function on $[0,\varepsilon]$ for some $\varepsilon>0$. Suppose that $v_{0} \in \mathcal{D}(A)$ is such that $Av_{0} + f(0) \in \mathcal{D}(A)$. Then for the classical solution $v(\cdot)$ on $[0,\varepsilon]$ of
	\begin{equation}
	\label{EQ: InhomogeneousConstantEquation}
	\dot{v}(t) = A v(t) + f(t)
	\end{equation}
	with $v(0)=v_{0}$ we have
	\begin{equation}
	\label{EQ: AuxiliaryDifferentiationLemmaKrein}
	\lim\limits_{h \to 0+}\frac{v(h)-v(0)}{h} = v'(0)=Av(0)+f(0),
	\end{equation}
	where the limit exists in $\mathcal{D}(A)$ endowed with the graph norm.
\end{lemma}
\begin{proof}
	Since $[0,\varepsilon] \ni t \mapsto f(t) \in \mathbb{H}$ is continuously differentiable, we have that $v(\cdot)$ is a continuously differentiable $\mathbb{H}$-valued function on $[0,\varepsilon]$ with $v(t) \in \mathcal{D}(A)$ and $v(t)$ satisfies \eqref{EQ: InhomogeneousConstantEquation} for all $t \in [0,\varepsilon]$ (see, for example, Theorem 6.5, Chapter I in \cite{Krein1971}). In particular, \eqref{EQ: AuxiliaryDifferentiationLemmaKrein} holds in the norm of $\mathbb{H}$. It remains to show that $A((v(h) - v(0))/h)$ tends to $A(Av(0)+f(0))$ in $\mathbb{H}$ as $h \to 0+$. Let us suppose, for simplicity of notation, that $0$ is a regular point for $A$, i.e. $0$ does not belong to the spectrum of $A$. Otherwise, we can consider $A + \rho I$ with a proper $\rho \in \mathbb{R}$ and the solution $v_{\rho}(t) = e^{\rho t}v(t)$. From
	\begin{equation}
	v(h) = G(h)v_{0} + \int_{0}^{h} G(h-s)f(s)ds
	\end{equation}
	and the identity (see equation (6.8) on p. 133 in \cite{Krein1971})
	\begin{equation}
	\begin{split}
    \int_{0}^{h}G(h-s)f(s)ds = G(h)A^{-1}f(0) + \int_{0}^{h} G(h-s)A^{-1}f'(s)ds - A^{-1}f(h)
	\end{split}
	\end{equation}
	we have
	\begin{equation}
	A\left( \frac{v(h)-v(0)}{h} \right) = \frac{G(h)-I}{h} ( Av(0) + f(0) ) + \int_{0}^{h}\frac{G(h-s) - I}{h} f'(s)ds,
	\end{equation}
	where the first term in the right-hand side tends to $A(Av(0)+f(0))$ as $h \to 0+$ since $Av(0)+f(0) \in \mathcal{D}(A)$ and the second term tends to zero since $f$ is twice continuously differentiable. The proof is finished.
\end{proof}

Now we can give a proof of Theorem \ref{TH: OptimalProcessDelay}.
\begin{proof}[Proof of Theorem \ref{TH: OptimalProcessDelay}]
	We have already proved the statement from item 1) of the theorem in Lemma \ref{LEM: OptimizationOnAffineSubspaces} and Lemma \ref{LEM: OperatorP} by modulo the Lyapunov inequality \eqref{EQ: OperatorSolutionsInequality}.
	
	Let us show \eqref{EQ: OperatorSolutionsInequality}. For this let $v(t)=v(t;v_{0},\xi)$, where $t \in [0,T]$, be the solution to \eqref{EQ: ControlSystem} with $v(0)=v_{0} \in \mathbb{E}_{0}$ and $\xi(\cdot) \in L_{2}(0,T;\Xi)$ which additionally satisfies $v(\cdot) \in C([0,T];\mathbb{E}_{0})$. Consider the process $(\widetilde{v}(\cdot),\widetilde{\xi}(\cdot)) \in \mathfrak{M}_{v_{0}}$, where
	\begin{equation}
	\widetilde{v}(t)= \begin{cases}
	v(t), &\text{ if } t \in [0,T),\\
	v^{0}(t-T,v(T)), &\text{ if } t \geq T.
	\end{cases}
	\end{equation}
	and
	\begin{equation}
	\widetilde{\xi}(t)= \begin{cases}
	\xi(t), &\text{ if } t \in [0,T),\\
	\xi^{0}(t-T,v(T)), &\text{ if } t \geq T.
	\end{cases}
	\end{equation}
	From the inequality $\mathcal{J}_{\mathcal{F}}(v^{0}(\cdot;v_{0}),\xi^{0}(\cdot;v_{0})) \leq \mathcal{J}_{\mathcal{F}}(\widetilde{v}(\cdot),\widetilde{\xi}(\cdot))$ and Lemma \ref{LEM: OperatorP} we have
	\begin{equation}
	V(v(T)) - V(v_{0}) + \int_{0}^{T}\mathcal{F}(v(t),\xi(t)) dt \geq 0.
	\end{equation}
	Thus, item 1) of the theorem is proved. 
	
	For item 2), in virtue of Lemma \ref{EQ: AlphaEquivalenceLemma}, it only requires to show that $\alpha_{1}>0$ implies $\alpha_{2}>0$. Indeed, suppose $\alpha_{1}>0$. Consider the quadratic form $\mathcal{F}_{\delta}(v,\xi):=\mathcal{F}(v,\xi)-\delta (|v|^{2}_{\mathbb{W}}+|\xi|_{\Xi}^{2})$. It is clear that for the form $\mathcal{F}_{\delta}$ a condition analogous to $\alpha_{1}>0$ will be satisfied if $\delta>0$ is chosen sufficiently small. Therefore, there exists an operator $P_{\delta} \in \mathcal{L}(\mathbb{E}_{0},\mathbb{E}^{*}_{0})$, which is symmetric and for $h>0$ we have
	\begin{equation}
	\label{EQ: MonotoneIneqAlpha1ImpliesA2}
	\langle v(h),P_{\delta}v(h) \rangle - \langle v_{0}, P_{\delta}v_{0} \rangle + \int_{0}^{h} \mathcal{F}(v(s),\xi(s)) \geq \delta \int_{0}^{h}(|v(s)|^{2}_{\mathbb{W}}+|\xi(s)|_{\Xi}^{2})ds,
	\end{equation}
	where $v(\cdot)=v(\cdot;v_{0},\xi)$ and $\xi(\cdot) \in L_{2}(0,h;\Xi)$ are such that $v(\cdot) \in C([0,h];\mathbb{E}_{0})$. Let us choose $\xi(\cdot) \equiv \xi_{0}$ for a fixed $\xi_{0} \in \Xi$ and $v_{0} \in \mathcal{D}(A)$ such that $Av_{0} + B\xi_{0} \in \mathcal{D}(A)$. Then from Lemma \ref{LEM: ConstantDifferentiationProperty} it follows that $v(\cdot)$ is $\mathcal{D}(A)$-differentiable at $0$ and, in virtue of the continuous embedding $\mathcal{D}(A) \subset \mathbb{E}_{0}$, it is also $\mathbb{E}_{0}$-differentiable. Dividing \eqref{EQ: MonotoneIneqAlpha1ImpliesA2} by $h>0$ and letting $h$ tend to zero, we get
	\begin{equation}
	2\operatorname{Re}\langle Av_{0}+B\xi_{0}, P_{\delta}v_{0}\rangle + \mathcal{F}(v_{0},\xi_{0}) \geq \delta ( |v_{0}|^{2}_{\mathbb{W}}+|\xi_{0}|_{\Xi}^{2} ).
	\end{equation}
	If we choose $v_{0}$ and $\xi_{0}$ such that $i \omega v_{0} = A v_{0} + B \xi_{0}$ for some $\omega \in \mathbb{R}$, then we immediately have
	\begin{equation}
	\label{EQ: TH11Alpha2PositiveEnd}
	\mathcal{F}(v_{0},\xi_{0}) \geq \delta ( |v_{0}|^{2}_{\mathbb{W}}+|\xi_{0}|_{\Xi}^{2} ),
	\end{equation}
	which in turn implies that $\alpha_{2}>0$. The proof is finished.
\end{proof}
\subsection{Proof of Theorem \ref{TH: FreqThRelaxedDelay}}
\label{SUBSEC: RelaxingControllability}

Before we start giving a proof of Theorem \ref{TH: FreqThRelaxedDelay}, let us consider the modification of \eqref{EQ: ControlSystem} associated with the extended control pair $(A,\widehat{B})$, i.e.
\begin{equation}
\label{EQ: ModifiedControlSystem}
\dot{v}(t) = Av(t) + B\xi(t) + \eta(t) = Av(t) + \widehat{B}\zeta(t),
\end{equation}
where $\zeta(\cdot)=(\xi(\cdot),\eta(\cdot)) \in L_{2}(0,T;\widehat{\Xi})$ is the extended control function and $\widehat{\Xi} = \Xi \times \mathbb{W}$ is the extended control space with $\widehat{B}\zeta := B\xi + \eta$ for each $\zeta=(\xi,\eta) \in \widehat{\Xi}$. 

Let us show that if the semigroup $G(t)$ is potentially $(L_{2},\mathbb{W})$-controllable in $\mathbb{E}_{0}$, then the pair $(A,\widehat{B})$ is $(L_{2},\mathbb{W})$-controllable in $\mathbb{E}_{0}$ no matter what $B$. Indeed, by definition, for any $v_{0} \in \mathbb{E}_{0}$ there exists $\varkappa \in \mathbb{R}$ such that $v_{\varkappa}(t) = e^{\varkappa t}G(t)v_{0}$ satisfies $v_{\varkappa}(\cdot) \in L_{2}(0,\infty;\mathbb{W})$. But then $v(\cdot) = v_{\varkappa}(\cdot)$ satisfies \eqref{EQ: ModifiedControlSystem} with the control functions $\xi(\cdot) \equiv 0$ and $\eta(\cdot) = \varkappa v(\cdot)$ and, consequently, the space of processes through $v_{0}$ is not empty.

For $\gamma > 0$ let us consider the quadratic form
\begin{equation}
\mathcal{F}_{\gamma}(v,\xi,\eta) := \mathcal{F}(v,\xi) + \gamma |\eta|^{2}_{\mathbb{W}}
\end{equation}
and the value
\begin{equation}
\alpha_{\gamma} := \inf \frac{ \mathcal{F}_{\gamma}(v,\xi,\eta) }{|v|^{2}_{\mathbb{W}}+|\xi|^{2}_{\Xi} + |\eta|^{2}_{\mathbb{W}}},
\end{equation}
where the infimum is taken over all $\omega \in \mathbb{R}$, $v \in \mathcal{D}(A)$, $\xi \in \Xi$ and $\eta \in \mathbb{W}$ such that $i \omega v = A v + B \xi + \eta$. 

Recall that for Theorem \ref{TH: FreqThRelaxedDelay} we assume $(A,\widehat{B})$ to satisfy \nameref{DESC:QF} and \nameref{DESC:FT} w.r.t. $\widehat{\mathcal{F}}(v,\xi,\eta) = \mathcal{F}(v,\xi)$. From this it is clear that these properties are also satisfied for $\mathcal{F}_{\gamma}$ since the added term is bounded in $\mathbb{W}$.  So, under the inequality $\alpha_{\gamma}>0$ we can apply Theorem \ref{TH: OptimalProcessDelay} to the pair $(A,\widehat{B})$ and the form $\mathcal{F}_{\gamma}$. 

Recall also that we require in \nameref{DESC:RES1} that $\mathcal{F}$ is defined and bounded on $\mathbb{E} \times \Xi$, where $\mathbb{E}$ is a Banach space which is continuously embedded into $\mathbb{H}$ and identified with its image such that the inclusions in $\mathcal{D}(A) \subset \mathbb{E} \subset \mathbb{W}$ become continuous embeddings. Then assumptions \nameref{DESC:RES2} and \nameref{DESC:RES3}, under the condition that $A$ does not have spectrum on the imaginary axis, require the norms of $(A-i\omega I)^{-1}$ in $\mathcal{L}(\mathbb{W};\mathbb{E})$ and $(A-i\omega I)^{-1}B$ in $\mathcal{L}(\Xi;\mathbb{E})$ respectively to be bounded uniformly in $\omega \in \mathbb{R}$.

Now we can proceed to the proof. Its arguments are similar to the corresponding part of our adjacent work \cite{Anikushin2020FreqParab}, which in its turn develops the ideas of A.V.~Proskurnikov \cite{Proskurnikov2015}.
\begin{proof}[Proof of Theorem \ref{TH: FreqThRelaxedDelay}]
	We have to prove that $\alpha_{2}>0$ (given by \eqref{EQ: Alpha2} for $\mathcal{F}$) implies that $\alpha_{\gamma} > 0$, where $\gamma>0$ is sufficiently large. In this case, we may apply Theorem \ref{TH: OptimalProcessDelay} to the pair $(A,\widehat{B})$ and the form
	\begin{equation}
		\mathcal{F}^{\delta}_{\gamma}(v,\xi,\eta) := \mathcal{F}_{\gamma}(v,\xi,\eta) - \delta (|v|^{2}_{\mathbb{W}} + |\xi|^{2}_{\Xi} + |\eta|^{2}_{\mathbb{W}})
	\end{equation} 
    for a sufficiently small $\delta>0$ to get an operator $P_{\gamma} \in \mathcal{L}(\mathbb{E}_{0};\mathbb{E}^{*}_{0})$ such that for $V(v):=\langle v, P_{\gamma}v \rangle$ the inequality 
	\begin{equation}
	V(v(T)) - V(v_{0}) + \int_{0}^{T} \mathcal{F}_{\gamma}(v(t),\xi(t),\eta(t))dt \geq \delta \int_{0}^{T}( |v(t)|^{2}_{\mathbb{W}} + |\xi(t)|^{2}_{\Xi} + |\eta(t)|^{2}_{\mathbb{W}})dt
	\end{equation}
	is satisfied for any solution $v(t)=v(t;v_{0},(\xi(\cdot),\eta(\cdot)))$ of \eqref{EQ: ModifiedControlSystem} which additionally satisfies $v(\cdot) \in C([0,T];\mathbb{E}_{0})$. Putting $\eta(t) \equiv 0$, we get the inequality \eqref{EQ: RelaxedFreqThLyapunivInequality} for the form $\mathcal{F}$ and the pair $(A,B)$. So, $P:=P_{\gamma}$ is the desired operator.
	
	In the same way as in the proof of Theorem \ref{TH: OptimalProcessDelay}, where \eqref{EQ: TH11Alpha2PositiveEnd} is derived from \eqref{EQ: MonotoneIneqAlpha1ImpliesA2}, one can show that the inequality $\alpha_{2}>0$ is necessary for the existence of $P$ as in \eqref{EQ: RelaxedFreqThLyapunivInequality}. 
	
	Now let us assume that $\alpha_{2}>0$. From this and \nameref{DESC:RES3} there exists $\varepsilon>0$ such that
	\begin{equation}
	\label{EQ: RelaxationStrongerIneq}
	\mathcal{F}(v,\xi) \geq \varepsilon ( \|v\|^{2}_{\mathbb{E}} + |\xi|^{2}_{\Xi} )
	\end{equation}
	is satisfied for all $v = -(A - i\omega I)^{-1}B\xi$ corresponding to arbitrary $\omega \in \mathbb{R}$ and $\xi \in \Xi$. For any $\eta \in \mathbb{W}$ and $\omega \in \mathbb{R}$ we define vectors $v_{\omega}(\eta) \in \mathcal{D}(A)$ and $\xi_{\omega}(\eta) \in \Xi$ such that we have
	\begin{equation}
	i \omega v_{\omega}(\eta) = A v_{\omega}(\eta) + B \xi_{\omega}(\eta) + \eta.
	\end{equation}
	and there are constants $M_{1}>0, M_{2}>0$ such that
	\begin{equation}
	\label{EQ: RelaxationFunctionsConstants}
	\| v_{\omega}(\eta) \|_{\mathbb{E}} \leq M_{1} |\eta|_{\mathbb{W}} \text{ and } |\xi_{\omega}(\eta)|_{\Xi} \leq M_{2} |\eta|_{\mathbb{W}}.
	\end{equation}
	For example, in virtue of \nameref{DESC:RES2}, one can take $v_{\omega}(\eta):= -(A - i\omega I)^{-1}\eta$ and $\xi_{\omega}(\eta) = 0$. 
	
	Now let $\omega \in \mathbb{R}$, $v \in \mathcal{D}(A)$, $\xi \in \Xi$ and $\eta \in \mathbb{W}$ be given such that
	\begin{equation}
	i \omega v = A v + B \xi + \eta.
	\end{equation}
	Putting $\delta v := v - v_{\omega}(\eta)$, $\delta \xi := \xi - \xi_{\omega}(\omega)$, we get the identity
	\begin{equation}
	\label{EQ: RelaxationDeltaFreq}
	i\omega \delta v = A \delta v + B \delta \xi
	\end{equation}
	and the representation
	\begin{equation}
	\mathcal{F}_{\gamma}(v,\xi,\eta) = \mathcal{F}(v_{\omega}(\eta),\xi_{\omega}(\eta)) + 2\operatorname{Re}\mathcal{W}((\delta v,\delta \xi), (v_{\omega}(\eta),\xi_{\omega}(\eta)) ) + \mathcal{F}(\delta v,\delta \xi) + \gamma |\eta|^{2}_{\mathbb{W}}.
	\end{equation}
	In virtue of \eqref{EQ: RelaxationDeltaFreq} and \eqref{EQ: RelaxationStrongerIneq} we get that $\mathcal{F}(\delta v,\delta \xi) \geq \varepsilon ( \|\delta v\|^{2}_{\mathbb{E}} + |\delta\xi|^{2}_{\Xi} )$. From \eqref{EQ: RelaxationFunctionsConstants} we have constants $\widetilde{M}_{1}>0$, $\widetilde{M}_{2}>0$ such that
	\begin{equation}
	\label{EQ: RelaxingFormConstants}
	|\mathcal{F}(v_{\omega}(\eta),\xi_{\omega}(\eta))| \leq \widetilde{M}_{1}|\eta|^{2}_{\mathbb{W}} \text{ and } |\mathcal{W}((\delta v, \delta \xi), (v_{\omega}(\eta),\xi_{\omega}(\eta)))| \leq \widetilde{M}_{2} |\delta\xi|_{\Xi} \cdot |\eta|_{\mathbb{W}}.
	\end{equation}
	Let us choose numbers $\gamma_{1} := \widetilde{M}_{1}$ and $\gamma_{2} > 0$ such that $\gamma_{2} x^{2} - \widetilde{M}_{2} \beta x + \beta^2 \varepsilon/2 \geq 0$ for all $x \in \mathbb{R}$ and all $\beta \geq 0$. For this, it is sufficient to satisfy $2 \gamma_{2} \varepsilon \geq \widetilde{M}^{2}_{2}$. Now define $\gamma := \gamma_{1} + \gamma_{2} + \gamma_{3}$ for any $\gamma_{3}>0$. From this and \eqref{EQ: RelaxingFormConstants} we have
	\begin{equation}
	\mathcal{F}_{\gamma}(v,\xi,\eta) \geq \frac{\varepsilon}{2} \cdot \left( \|\delta v\|^{2}_{\mathbb{E}} + |\delta\xi|^{2}_{\Xi} \right) + \gamma_{3} |\eta|^{2}_{\mathbb{W}}.
	\end{equation}
	From the obvious inequalities $\|v\|_{\mathbb{E}} \leq M_{1}|\eta|_{\mathbb{W}} + \|\delta v\|_{\mathbb{E}}$ and $|\xi|_{\Xi} \leq M_{2} |\eta|_{\mathbb{W}} + |\delta \xi|_{\Xi}$ and the continuous embedding $\mathbb{E} \subset \mathbb{W}$ we get that	 $\alpha_{\gamma} > 0$. The proof is finished.
\end{proof}
\subsection{Realification of the operator $P$ and its sign properties}
\label{SUBSEC: Realification}
In practice, we often work in the context of real spaces and operators. Suppose a real quadratic form $\mathcal{F}$ is given and consider its Hermitian extension to the complexifications of the spaces given by $\mathcal{F}^{\mathbb{C}}(v_{1}+iv_{2}, \xi_{1}+i\xi_{2}):=\mathcal{F}(v_{1},\xi_{2}) + \mathcal{F}(v_{2},\xi_{2})$ for $v_{1},v_{2} \in \mathcal{D}(A)$, $\xi_{1},\xi_{2} \in \Xi$. From Theorem \ref{TH: OptimalProcessDelay} or Theorem \ref{TH: FreqThRelaxedDelay} after taking complexifications\footnote{Here and below, the complexification of a real vector space $\mathbb{E}_{0}$ is denoted by $\mathbb{E}_{0}^\mathbb{C}$. For convenience, we use the same notations for the corresponding complexifcations of the operators.}, we get an operator $P \in \mathcal{L}((\mathbb{E}_{0})^{\mathbb{C}};(\mathbb{E}^{*}_{0})^{\mathbb{C}})$. From this one can use a realification procedure to obtain an operator between the real spaces $\mathbb{E}_{0}$ and $\mathbb{E}^{*}_{0}$.

Recall that the semigroup $G(t)$, where $t \geq 0$, is called \textit{potentially} $(L_{2},\mathbb{W})$-\textit{controllable} in $\mathbb{E}_{0}$ if for any $v_{0} \in \mathbb{E}_{0}$ there exists $\varkappa \in \mathbb{R}$ such that for $v_{\varkappa}(t) = e^{\varkappa t}G(t)v_{0}$ we have $v_{\varkappa}(\cdot) \in L_{2}(0,\infty;\mathbb{W})$

We give a real formulation of Theorem \ref{TH: FreqThRelaxedDelay}, which is convenient for further applications.
\begin{theorem}
	\label{TH: FrequencyTheoremRealOperator}
	In the context of real spaces and operators, let the $C_{0}$-semigroup $G(t)$ generated by $A$ in $\mathbb{H}$ be potentially $(L_{2},\mathbb{W})$-controllable in $\mathbb{E}_{0}$ suppose \nameref{DESC:RES} is satisfied for the complexificated pair $(A,B)$ and the Hermitian extension $\mathcal{F}^{\mathbb{C}}$ of $\mathcal{F}$. Moreover, let \nameref{DESC:QF} and \nameref{DESC:FT} be satisfied for the complexificated extended control pair $(A,\widehat{B})$ as in \eqref{EQ: ModifiedControlSystem} and the extended Hermitian form $\widehat{\mathcal{F}}(v,\xi,\eta) := \mathcal{F}^{\mathbb{C}}(v,\xi)$. Let the frequency inequality
	\begin{equation}
		\label{EQ: FrequencyConditionNegative}
		\sup_{\omega \in \mathbb{R}}\sup_{\xi \in \Xi^{\mathbb{C}}} \frac{\mathcal{F}^{\mathbb{C}}(-(A-i\omega I)^{-1}B\xi,\xi)}{|\xi|^{2}_{\Xi^{\mathbb{C}}}} < 0
	\end{equation}
	be satisfied. Then there exists a symmetric operator $P \in \mathcal{L}(\mathbb{E}_{0};\mathbb{E}^{*}_{0})$ and a number $\delta>0$ such that for $V(v):=\langle v, Pv\rangle$ and all $v_{0} \in \mathbb{E}_{0}$ we have
	\begin{equation}
		\label{EQ: RealificationInequalityModified}
		V(v(T)) - V(v_{0}) + \int_{0}^{T} \mathcal{F}(v(t),\xi(t))dt \leq -\delta \int_{0}^{T}\left(|v(t)|^{2}_{\mathbb{W}} + |\xi(t)|^{2}_{\Xi} \right)dt,
	\end{equation}
	where $v(t)=v(t;v_{0},\xi)$ is the mild solution to \eqref{EQ: ControlSystem} with arbitrary $v(0)=v_{0} \in \mathbb{E}_{0}$ and $\xi(\cdot) \in L_{2}(0,T;\Xi)$ which additionally satisfies $v(\cdot) \in C([0,T];\mathbb{E}_{0})$ and the integral with $\mathcal{F}$ is understood in the sense of \nameref{DESC:QF}.
\end{theorem}
\begin{proof}
	From the potential $(L_{2},\mathbb{W})$-controllability in $\mathbb{E}_{0}$ we get that the pair $(A,\widehat{B})$ is $(L_{2},\mathbb{W})$-controllable in $\mathbb{E}_{0}$ (see below \eqref{EQ: ModifiedControlSystem}). Now the result follows from Theorem \ref{TH: FreqThRelaxedDelay} applied to the form $-\mathcal{F}^{\mathbb{C}}$ and the realification procedure as in \cite{Anikushin2020FreqParab,Anikushin2019+OnCom}. The proof is finished.
\end{proof}

For purposes of applications, it is important to decompose the phase space, which is assumed to be the space $\mathbb{E}_{0}$, into the direct sum of positive and negative subspaces, where the sign is understood in terms of the sign of the operator $P$ or, equivalently, its quadratic form $V(\cdot)$ restricted to each subspace. Such properties can be determined from linear dichotomies and the Lyapunov inequality, which can be obtained from \eqref{EQ: RealificationInequalityModified} in applications. Namely, for a Banach space $\mathbb{E}_{0}$, which is continuously embedded into $\mathbb{H}$, we suppose that $G(t) \colon \mathbb{E}_{0} \to \mathbb{E}_{0}$, where $t \geq 0$, is a $C_{0}$-semigroup in $\mathbb{E}_{0}$. Suppose also that there exists a decomposition of $\mathbb{E}_{0}$ into the direct sum $\mathbb{E}_{0} = \mathbb{E}_{0}^{s} \oplus \mathbb{E}_{0}^{u}$ of two closed subspaces $\mathbb{E}_{0}^{s}$ and $\mathbb{E}_{0}^{u}$ such that for $v_{0} \in \mathbb{E}_{0}^{s}$ we have $G(t)v_{0} \to 0$ in $\mathbb{E}_{0}$ as $t \to +\infty$ and any $v_{0} \in \mathbb{E}_{0}^{u}$ admits a backward extension $v(\cdot)$ (that is a continuous function $v(\cdot) \colon \mathbb{R} \to \mathbb{E}_{0}$ with $v(0)=v_{0}$ and $v(t+s)=G(t)v(s)$ for all $t \geq0$ and $s \in \mathbb{R}$) such that $v(t) \to 0$ in $\mathbb{E}_{0}$ as $t \to -\infty$. We have the following theorem, the proof of which is analogous to Theorem 5 from \cite{Anikushin2020FreqParab}.
\begin{theorem}
	\label{TH: HighRankConesTheorem}
	In the above context, consider any symmetric operator $P \in \mathcal{L}(\mathbb{E}_{0};\mathbb{E}_{0}^{*})$ and its quadratic form $V(v):= \langle v, Pv \rangle$ for $v \in \mathbb{E}_{0}$. Let the Lyapunov inequality
	\begin{equation}
	\label{EQ: LyapunovInequalityIntegralForm}
	V(G(t)v_{0}) - V(v_{0}) \leq -\delta \int_{0}^{T}|v(t)|^{2}_{\mathbb{H}}dt
	\end{equation}
	hold for all $T>0$ and $v_{0} \in \mathbb{E}_{0}$. Then $P$ is positive on $\mathbb{E}_{0}^{s}$ (that is $V(v) > 0$ for all non-zero $v \in \mathbb{E}_{0}^{s}$) and $P$ is negative on $\mathbb{E}_{0}^{u}$ (that is $V(v) < 0$ for all non-zero $v \in \mathbb{E}_{0}^{u}$).
\end{theorem}

\begin{remark}
Under the hypotheses of Theorem \ref{TH: HighRankConesTheorem}, suppose that $\operatorname{dim}\mathbb{E}_{0}^{u} =: j < \infty$. In this case the set $\mathcal{C}_{V}:=\{ v \in \mathbb{E}_{0} \ | \ V(v) \leq 0 \}$ is a quadratic cone of rank $j$. In applications, inequality \eqref{EQ: RealificationInequalityModified} usually contains the Lyapunov inequality \eqref{EQ: LyapunovInequalityIntegralForm} and gives a strong monotonicity of the system with respect to this cone (see Theorem \ref{TH: QuadraticLyapunovFuncDelay} below). In the theory of inertial manifolds such monotonicity is known as the Cone Condition. However, in most of known works the class of considered cones is either a very special class of quadratic cones (as for parabolic equations, see \cite{Zelik2014}) or naive non-quadratic generalizations of this class in the Banach space setting (see \cite{KokschSiegmund2002} and the introduction in \cite{Anikushin2020FreqParab} for a discussion).
\end{remark}
\section{Pointwise measurement operators on embracing spaces and spaces of agalmanated functions}
\label{SEC: LemmaOnEstimatesForLF}

Throughout this section, we fix a real or complex separable Hilbert space\footnote{In fact, the Hilbert space structure is required only when we are studying the Fourier transform. Moreover, the separability is only used in the proof of Lemma \ref{LEM: TwistedFuncUniquenessY}.} $\mathbb{F}$ and a number $\tau > 0$. Suppose $\mathbb{M}_{\gamma}$ is another Hilbert space over the same field and let $\gamma(\theta) \in \mathcal{L}(\mathbb{F};\mathbb{M}_{\gamma})$, where $\theta \in [-\tau,0]$, be an operator-valued function of bounded variation on $[-\tau,0]$. Then with such $\gamma$ we associate a bounded operator $C^{\gamma}$ from $C([-\tau,0];\mathbb{F})$ into $\mathbb{M}_{\gamma}$ given by 
\begin{equation}
	\label{EQ: OperatorCgammaDefinitionNeutral}
	C^{\gamma}\phi := \int_{-\tau}^{0}d\gamma(\theta)\phi(\theta) \text{ for any } \phi \in C([-\tau,0];\mathbb{F}),
\end{equation}
where the integral is understood as the Riemann–Stieltjes integral.

Below we will describe certain spaces of $L_{2}(-\tau,0;\mathbb{F})$-valued functions $\phi(\cdot)$ on $[0,T]$ on which the pointwise action of $C^{\gamma}$ can be understood in the integral sense that agrees with $(C^{\gamma}\phi)(t)=C^{\gamma}\phi(t)$ for continuous $C([-\tau,0];\mathbb{F})$-valued functions of $t \in [0,T]$. We refer to Appendix B from our adjacent work \cite{Anikushin2023Comp} where some parts of the theory are developed in a wider context. In our case, the proofs do not require as many preparations as in \cite{Anikushin2023Comp}, so we give them for the sake of completeness. Moreover, we also complement the results by some theorems on whether $(C^{\gamma}\phi)(\cdot)$ belongs to the Sobolev space $W^{1,2}(0,T;\mathbb{M}_{\gamma})$. Such results are essential to derive the structural Cauchy formula in Theorem \ref{TH: DelayRegCond} describing the structure of solutions to linear inhomogeneous systems associated with neutral delay equations.

\subsection{Embracing spaces and pointwise measurement operators}
\label{SUBSEC: EmbracingSpacesPointwiseMesOp}
We start by introducing, in a sense, the largest space where the pointwise measurement operators can be defined. Roughly speaking, the corresponding functions of time must have the values in $L_{2}(-\tau,0;\mathbb{F})$ for which the delta measurement $\delta_{\theta}$ at each $\theta \in [-\tau,0]$ is a well-defined element of $L_{p}$ over the time interval and some uniform in $\theta$ bound is satisfied. More precisely, for each $-\infty \leq a < b \leq +\infty$ and $p \geq 1$ we define the \textit{embracing space} $\mathcal{E}_{p}(a,b;L_{p}(-\tau,0;\mathbb{F}))$ as the completion of $L_{p}(a,b;C([-\tau,0];\mathbb{F}))$ by the norm
\begin{equation}
	\label{EQ: NormEmbracingSpaceNeutralDelay}
	\| \phi(\cdot) \|_{\mathcal{E}_{p}(a,b;L_{p}(-\tau,0;\mathbb{F}))}:= \sup\limits_{\theta \in [-\tau,0]}\| (\mathcal{I}_{\delta_{\theta}}\phi)(\cdot) \|_{L_{p}(a,b;\mathbb{F})},
\end{equation}
where $(\mathcal{I}_{\delta_{\theta}}\phi)(t) = \phi(t)(\theta)$ for almost all $t \in [a,b]$. For brevity, we will write $\mathcal{E}_{p}(a,b;L_{p})$ to denote the space just introduced.

Note that for each $\phi(\cdot) \in L_{p}(a,b;C([-\tau,0];\mathbb{F}))$ the mapping $[-\tau,0] \ni \theta \mapsto (\mathcal{I}_{\delta_{\theta}}\phi)(\cdot) \in L_{p}(a,b;\mathbb{F})$ is continuous by the Dominated Convergence Theorem or by an approximation argument. In fact, this property characterizes $\mathcal{E}_{p}(a,b;L_{p})$ as follows.
\begin{lemma}
	\label{LEM: EmbracingSpaceDescriptionCLp}
	We have the following
	\begin{enumerate}
		\item[1)] There is an isometric isomorphism
		\begin{equation}
			\label{EQ: EmbracingSpaceCorrespondenceContFunc}
			\mathcal{E}_{p}(a,b;L_{p}(-\tau,0;\mathbb{F})) \ni \phi(\cdot) \mapsto \mathcal{R}_{\phi}(\cdot) \in C([-\tau,0];L_{p}(a,b;\mathbb{F})), 
		\end{equation}
	    where $\mathcal{R}_{\phi}(\theta) := (\mathcal{I}_{\delta_{\theta}}\phi)(\cdot) \text{ for } \theta \in [-\tau,0]$.
	    \item[2)] There is an inclusion of $\mathcal{E}_{p}(a,b;L_{p}(-\tau,0;\mathbb{F}))$ into $L_{p}(a,b;L_{p}(-\tau,0;\mathbb{F}))$ such that
	    \begin{equation}
	    	\| \phi(\cdot) \|_{L_{p}(a,b;L_{p}(-\tau,0;\mathbb{F}))} \leq \tau^{1/p} \cdot \| \phi(\cdot) \|_{\mathcal{E}_{p}(a,b;L_{p}(-\tau,0;\mathbb{F}))}.
	    \end{equation}	    
	\end{enumerate} 
\end{lemma}
\begin{proof}
	Indeed, consider any fundamental in $\mathcal{E}_{p}(a,b;L_{p}(-\tau,0;\mathbb{F}))$ sequence $\phi_{k}(\cdot) \in L_{p}(a,b;C([-\tau,0];\mathbb{F}))$, where $k=1,2,\ldots$. Clearly, this is equivalent to that $\mathcal{R}_{\phi_{k}}(\cdot)$ is fundamental in $C([-\tau,0];L_{p}(a,b;\mathbb{F}))$. Since such elements as $\mathcal{R}_{\phi}(\cdot)$ are dense in the latter space, we obtain item 1) of the theorem.
	
	For item 2) we take $\phi(\cdot) \in L_{p}(a,b;C([-\tau,0];\mathbb{F}))$ and note that $\phi(t)(\theta) = \mathcal{R}_{\phi}(\theta)(t)$ must hold for almost all $(t,\theta) \in (a,b) \times [-\tau,0]$. Then the Fubini theorem gives
	\begin{equation}
		\int_{a}^{b}\|\phi(t)\|^{p}_{L_{p}(-\tau,0;\mathbb{F})}dt = \int_{a}^{b}dt\int_{-\tau}^{0}|\mathcal{R}_{\phi}(\theta)(t)|^{p}_{\mathbb{F}}d\theta = \int_{-\tau}^{0}d\theta\int_{a}^{b}|(\mathcal{I}_{\delta_{\theta}}\phi)(t)|^{p}_{\mathbb{F}}dt \leq \tau \| \phi(\cdot) \|^{p}_{\mathcal{E}_{p}(a,b;L_{p}(-\tau,0;\mathbb{F}))}.
	\end{equation}
    The proof is finished.
\end{proof}

Now we define the pointwise measurement operator associated with $C^{\gamma}$ from \eqref{EQ: OperatorCgammaDefinitionNeutral} on the embracing spaces. This is contained in the following theorem.
\begin{theorem}
	\label{TH: EmbracingSpaceFucntionalIGammaNeutral}
	Let $\gamma$ and $C^{\gamma}$ be as in \eqref{EQ: OperatorCgammaDefinitionNeutral} and $p \geq 1$. Then there exists a bounded linear operator
	\begin{equation}
		\mathcal{I}_{C^{\gamma}} \colon \mathcal{E}_{p}(a,b;L_{p}(-\tau,0;\mathbb{F})) \to L_{p}(a,b;\mathbb{M}_{\gamma})
	\end{equation}
    with the norm not exceeding the total variation $\operatorname{Var}_{[-\tau,0]}(\gamma)$ of $\gamma$ on $[-\tau,0]$ and such that for any $\phi(\cdot) \in L_{p}(a,b;C([-\tau,0];\mathbb{F}))$ we have
    \begin{equation}
    	\label{EQ: OperatorIGammaIdentityContEmbr}
    	(\mathcal{I}_{C^{\gamma}}\phi)(t) = C^{\gamma}\phi(t) \text{ for almost all } t \in (a,b).
    \end{equation}
\end{theorem}
\begin{proof}
	Firstly, note that for $C^{\gamma} = \delta_{\theta}$ being the $\mathbb{F}$-valued $\delta$-functional at some $\theta \in [-\tau,0]$ the operator $\mathcal{I}_{C^{\gamma}} = \mathcal{I}_{\delta_{\theta}}$ is well-defined by definition and its norm does not exceed $1$ (see \eqref{EQ: NormEmbracingSpaceNeutralDelay}).
	
	Now we use a particular pointwise approximation of general $C^{\gamma}$ by operator-linear combinations $\delta$-functionals. Namely, we take for any $k=1, 2, \ldots$, a partition $-\tau = \theta^{(k)}_{0} < \ldots < \theta^{(k)}_{N_{k}} = 0$ of $[-\tau,0]$ by $N_{k}+1$ points such that $\max_{1 \leq l \leq N_{k}}|\theta^{(k)}_{l}-\theta^{(k)}_{l-1}| \to 0$ as $k \to \infty$. Now for $l=1,\ldots,N_{k}$ consider the operators $\alpha^{(k)}_{l}:=\gamma(\theta^{(k)}_{l})-\gamma(\theta^{(k)}_{l-1})$ from $\mathcal{L}(\mathbb{F};\mathbb{M}_{\gamma})$ and the $\delta$-functionals $\delta^{(k)}_{l}:=\delta_{\theta^{(k)}_{l}}$ at $\theta^{(k)}_{l}$. We have
	\begin{equation}
		\label{EQ: ConvergenceDeltaFunc}
		C^{\gamma_{k}} \phi \to C^{\gamma}\phi \text{ for any } \phi \in C([-\tau,0];\mathbb{F}), \text{ where } C^{\gamma_{k}} = \sum_{l=1}^{N_{k}} \alpha^{(k)}_{l} \delta^{(k)}_{l}.
	\end{equation}
	Let $\operatorname{Var}_{[-\tau,0]}(\gamma)$ denote the total variation of $\gamma$ on $[-\tau,0]$. By the definition of $\alpha^{(k)}_{l}$, we have
	\begin{equation}
		\label{EQ: CgammaApproxAlphaVariationEstimate}
		\sum_{l=1}^{N_{k}} \|\alpha^{(k)}_{l}\|_{\mathcal{L}(\mathbb{F};\mathbb{M}_{\gamma})} \leq \operatorname{Var}_{[-\tau,0]}(\gamma).
	\end{equation}
	Combining this with the Fatou lemma, the Minkowski inequality and \eqref{EQ: CgammaApproxAlphaVariationEstimate}, for $\phi(\cdot) \in L_{p}(a,b;C([-\tau,0];\mathbb{F}))$ we obtain
	\begin{equation}
		\begin{split}
			\left(\int_{a}^{b} |C^{\gamma}\phi(t)|^{p}_{\mathbb{M}_{\gamma}} dt \right)^{1/p} \leq \lim_{k \to \infty}\left(\int_{a}^{b}\left|C^{\gamma_{k}}\phi(t)\right|^{p}_{\mathbb{M}_{\gamma}} dt \right)^{1/p} \leq \operatorname{Var}_{[-\tau,0]}(\gamma) \cdot \| \phi(\cdot) \|_{\mathcal{E}_{p}(a,b;L_{p}(-\tau,0;\mathbb{F}))}.
		\end{split}
	\end{equation}
	The proof is finished.
\end{proof}

Suppose that $[a,b] \subset [c,d]$ for $-\infty \leq c \leq a \leq b \leq d \leq +\infty$. Consider the operators  $R^{1}_{T} \colon \mathcal{E}_{p}(c,d;L_{p}) \to \mathcal{E}_{p}(a,b;L_{p})$ and $R^{2}_{T} \colon L_{p}(c,d;\mathbb{M}_{\gamma}) \to L_{p}(a,b;\mathbb{M}_{\gamma})$ that restrict functions from $[c,d]$ to $[a,b]$. From \eqref{EQ: OperatorIGammaIdentityContEmbr} we immediately have the following.
\begin{lemma}
	\label{LEM: CommutativeOperatorIGammaEmbr}
	Under the above notations, the following diagram
	\begin{equation}
		\label{EQ: CommDiagramICgammaAdorned}
		\begin{tikzcd}
			\mathcal{E}_{p}(c,d;L_{p})\ar[d, "R^{1}_{T}"] \ar[r,"\mathcal{I}_{C^{\gamma}}"] 
			& L_{p}(c,d;\mathbb{M}_{\gamma}) \arrow[d,"R^{2}_{T}"] 
			\\
			\mathcal{E}_{p}(a,b;L_{p})  \arrow[r,"\mathcal{I}_{C^{\gamma}}"] 
			& L_{p}(a,b;\mathbb{M}_{\gamma})
		\end{tikzcd}
	\end{equation}
	is commutative. Here the operators $\mathcal{I}_{C^{\gamma}}$ are given by Theorem \ref{TH: EmbracingSpaceFucntionalIGammaNeutral}.
\end{lemma}

From Lemma \ref{LEM: CommutativeOperatorIGammaEmbr} and since $\mathcal{E}_{p}(a,b;L_{p}) \subset \mathcal{E}_{1}(a,b;L_{1})$ for finite $a$ and $b$, we obtain the following.
\begin{corollary}
	\label{EQ: ComputationICgammaEmbracingL1Loc}
	Suppose that $\mathcal{I}_{C^{\gamma}}$ is given by Theorem \ref{TH: EmbracingSpaceFucntionalIGammaNeutral}. Then
	\begin{equation}
		\label{EQ: OperatorCGammaComputationEmbracingL1loc}
		(\mathcal{I}_{C^{\gamma}}\phi)(t) = C^{\gamma}\phi(t) \text{ for almost all } t \in (a,b)
	\end{equation}
    is satisfied for all $\phi(\cdot) \in \mathcal{E}_{p}(a,b;L_{p}) \cap L_{1,loc}(a,b;C([-\tau,0;\mathbb{F}]))$.
\end{corollary}

Now let us consider the space $\mathcal{E}_{p}(a,b;W^{1,p}(-\tau,0;\mathbb{F}))$ or, for brevity, $\mathcal{E}_{p}(a,b;W^{1,p})$ corresponding to such $\phi(\cdot) \in \mathcal{E}_{p}(a,b;L_{p})$ for which in terms of \eqref{EQ: EmbracingSpaceCorrespondenceContFunc} we have $\mathcal{R}_{\phi}(\cdot) \in C([-\tau,0];W^{1,p}(a,b;\mathbb{F}))$. For each such $\phi(\cdot)$ we put $\phi'(\cdot)$ to be the element of $\mathcal{E}_{p}(a,b;L_{p})$ satisfying $\mathcal{R}_{\phi'}(\theta) = \frac{d}{dt}\mathcal{R}_{\phi}(\theta)$ for each $\theta \in [-\tau,0]$, where $\frac{d}{dt}$ denotes the derivative in $W^{1,p}(a,b;\mathbb{F})$. Clearly, $\mathcal{E}_{p}(a,b;W^{1,p})$ can be endowed with the norm
\begin{equation}
	\label{EQ: EmbracingSpaceSobolevNorm}
	\| \phi(\cdot) \|^{p}_{\mathcal{E}_{p}(a,b;W^{1,p})} := \|\phi(\cdot)\|^{p}_{\mathcal{E}_{p}(a,b;L_{p})} + \| \phi'(\cdot) \|^{p}_{\mathcal{E}_{p}(a,b;L_{p})}
\end{equation}
that makes it a Banach space.

We have the following theorem on differentiability of $\mathcal{I}_{C^{\gamma}}$.
\begin{theorem}
	\label{TH: EmbracingSobolevDifferentiabilityTheorem}
	Let $\gamma$ and $C^{\gamma}$ be as in \eqref{EQ: OperatorCgammaDefinitionNeutral} and $p \geq 1$. Then for each $\phi(\cdot) \in \mathcal{E}_{p}(a,b;W^{1,p})$ we have $(\mathcal{I}_{C^{\gamma}}\phi)(\cdot) \in W^{1,p}(a,b;\mathbb{M}_{\gamma})$ and
	\begin{equation}
		\label{EQ: EmbracingSobolevDerFormula}
		\frac{d}{dt}(\mathcal{I}_{C^{\gamma}}\phi)(t) = (\mathcal{I}_{C^{\gamma}}\phi')(t) \text{ for almost all } t \in (a,b),
	\end{equation}
    where $\phi'$ as in \eqref{EQ: EmbracingSpaceSobolevNorm}. In particular, the linear operator
    \begin{equation}
    	\mathcal{I}_{C^{\gamma}} \colon \mathcal{E}_{p}(a,b;W^{1,p}) \to W^{1,p}(a,b;\mathbb{M}_{\gamma})
    \end{equation}
    is bounded and its norm does not exceed the total variation $\operatorname{Var}_{[-\tau,0]}(\gamma)$ of $\gamma$ on $[-\tau,0]$.
\end{theorem}
\begin{proof}
	Note that \eqref{EQ: EmbracingSobolevDerFormula} is satisfied by definition for $C^{\gamma} = \delta_{\theta}$ and any $\theta \in [-\tau,0]$. For general $C^{\gamma}$ we can use approximations by $C^{\gamma_{k}}$ as in \eqref{EQ: ConvergenceDeltaFunc} and the pointwise convergence argument. The proof is finished.
\end{proof}

Now we are going to explore some connections of $\mathcal{I}_{C^{\gamma}}$ with the Fourier transform. Below, $\mathbb{F}$ is a complex Hilbert space. We firstly illustrate the problem by means of the following remark.
\begin{remark}
	Let $\mathfrak{F}_{1}$ be the Fourier transform in $L_{2}(\mathbb{R};L_{2}(-\tau,0;\mathbb{F}))$ and let $\mathfrak{F}^{\gamma}_{2}$ be the Fourier transform in $L_{2}(\mathbb{R};\mathbb{M}_{\gamma})$. Suppose $C \colon L_{2}(-\tau,0;\mathbb{F}) \to \mathbb{M}_{\gamma}$ is a bounded linear operator. Then from the Parseval identity for any $\phi(\cdot) \in L_{2}(\mathbb{R};L_{2}(-\tau,0;\mathbb{F}))$ we have
	\begin{equation}
		(\mathfrak{F}^{\gamma}_{2}C\phi)(\omega) = C\mathfrak{F}_{1}(\phi)(\omega) \text{ for almost all } \omega \in \mathbb{R}.
	\end{equation}
	In this sense, the pointwise measurement operator $\mathcal{I}_{C}$ associated with $C$ commutes with the Fourier transform. In Theorem \ref{TH: EmbracingFourierCommutesIGamma} below, we state an analog of this property satisfied for more general class of operators $C=C^{\gamma}$ as in \eqref{EQ: OperatorCgammaDefinitionNeutral} if we are restricted to the embracing space $\mathcal{E}_{2}(\mathbb{R};L_{2}(-\tau,0;\mathbb{F}))$.
\end{remark}

\begin{theorem}
	\label{TH: EmbracingFourierCommutesIGamma}
	The Fourier transform $\mathfrak{F}_{1}$ in $L_{2}(\mathbb{R};L_{2}(-\tau,0;\mathbb{F}))$ provides an isometric automorphism of the embracing space $\mathcal{E}_{2}(\mathbb{R};L_{2}(-\tau,0;\mathbb{F}))$. Moreover, for any $\gamma$ and $C^{\gamma}$ as in \eqref{EQ: OperatorCgammaDefinitionNeutral} the following diagram
	\begin{equation}
		\label{EQ: EmbracingSpaceIGammaFourierIdentity}
		\begin{tikzcd}
			\mathcal{E}_{2}(\mathbb{R};L_{2})\ar[d, "\mathfrak{F}_{1}"] \ar[r,"\mathcal{I}_{C^{\gamma}}"] 
			& L_{2}(\mathbb{R};\mathbb{M}_{\gamma}) \arrow[d,"\mathfrak{F}^{\gamma}_{2}"] 
			\\
			\mathcal{E}_{2}(\mathbb{R};L_{2})  \arrow[r,"\mathcal{I}_{C^{\gamma}}"] 
			& L_{2}(\mathbb{R};\mathbb{M}_{\gamma})
		\end{tikzcd}
	\end{equation}
	is commutative. Here $\mathcal{I}_{C^{\gamma}}$ is given by Theorem \ref{TH: EmbracingSpaceFucntionalIGammaNeutral}, $\mathfrak{F}^{\gamma}_{2}$ is the Fourier transform in $L_{2}(\mathbb{R};\mathbb{M}_{\gamma})$ and $L_{2}$ in the range stands for $L_{2}(-\tau,0;\mathbb{F})$.
\end{theorem}
\begin{proof}
	Take any $\phi(\cdot) \in \mathcal{E}_{2}(\mathbb{R};L_{2})$. Firstly, we have to show that $(\mathfrak{F}_{1}\phi)(\cdot) \in \mathcal{E}_{2}(\mathbb{R};L_{2})$. By definition and since $L_{2}(\mathbb{R};W^{1,2}(-\tau,0;\mathbb{F}))$ is dense in $L_{2}(\mathbb{R};C([-\tau,0];\mathbb{F}))$, there exists a sequence $\phi_{k}(\cdot) \in L_{2}(\mathbb{R};W^{1,2}(-\tau,0;\mathbb{F}))$, where $k=1,2,\ldots$, approximating $\phi(\cdot)$ in $\mathcal{E}_{2}(\mathbb{R};L_{2})$, i.e. in terms of \eqref{EQ: EmbracingSpaceCorrespondenceContFunc} we have
	\begin{equation}
		\label{EQ: EmbracingFourierInvarianceApprox}
		\mathcal{R}_{\phi_{k}}(\cdot) \to \mathcal{R}_{\phi}(\cdot) \text{ in } C([-\tau,0];L_{2}(\mathbb{R};\mathbb{F})).
	\end{equation}
	Note that we have $(\mathfrak{F}_{1}\phi_{k})(\cdot) \in L_{2}(\mathbb{R};W^{1,2}(-\tau,0;\mathbb{F}))$. Let $\mathfrak{F}_{2}$ be the Fourier transform in $L_{2}(\mathbb{R};\mathbb{F})$. Then for each $\theta \in [-\tau,0]$ we have the identities in $L_{2}(\mathbb{R};\mathbb{F})$ w.r.t. $\omega \in \mathbb{R}$ as
	\begin{equation}
		(\mathfrak{F}_{2}\mathcal{R}_{\phi_{k}}(\theta))(\omega) = \lim_{T \to +\infty}\frac{1}{\sqrt{2\pi}}\int_{-T}^{T}e^{-\omega t}\delta_{\theta}\phi_{k}(t)dt = \lim_{T \to +\infty}\delta_{\theta}\frac{1}{\sqrt{2\pi}}\int_{-T}^{T}e^{-\omega t}\phi_{k}(t)dt = \delta_{\theta}(\mathfrak{F}_{1}\phi_{k})(\omega).
	\end{equation}
	From this and \eqref{EQ: EmbracingFourierInvarianceApprox} we have the limits in $L_{2}(\mathbb{R};\mathbb{F})$, which are uniform in $\theta \in [-\tau,0]$, as
	\begin{equation}
		\label{EQ: FourierEmbracingSpaceIdentityLast}
		\mathfrak{F}_{2}\mathcal{R}_{\phi}(\theta) = \lim_{k \to \infty} \mathfrak{F}_{2}\mathcal{R}_{\phi_{k}}(\theta) = \lim_{k \to \infty} \mathcal{I}_{\delta_{\theta}}(\mathfrak{F}_{1}\phi_{k}).
	\end{equation}
	In other words, $\mathfrak{F}_{1}\phi_{k}(\cdot)$ is a convergent sequence in $\mathcal{E}_{2}(\mathbb{R};L_{2})$. Since, by Lemma \ref{LEM: EmbracingSpaceDescriptionCLp}, the embracing space is naturally continuously embedded into $L_{2}(\mathbb{R};L_{2}(-\tau,0;\mathbb{F}))$ and $\mathfrak{F}_{1}\phi_{k}$ converges to  $\mathfrak{F}_{1}\phi$ in the latter space, we must have $(\mathfrak{F}_{1}\phi)(\cdot) \in \mathcal{E}_{2}(\mathbb{R};L_{2})$. 
	
	From \eqref{EQ: FourierEmbracingSpaceIdentityLast} we get the identity
	\begin{equation}
		\label{EQ: EmbracingFourierIdentityDelta}
		\mathcal{R}_{\mathfrak{F}_{1}\phi}(\theta) = \mathcal{I}_{\delta_{\theta}}\mathfrak{F}_{1}\phi = \mathfrak{F}_{2} \mathcal{I}_{\delta_{\theta}} \phi \text{ for any } \phi(\cdot) \in \mathcal{E}_{2}(\mathbb{R};L_{2}).
	\end{equation} 
    From this and since $\mathfrak{F}_{1}$ takes the dense subspace $L_{2}(\mathbb{R};W^{1,2}(-\tau,0;\mathbb{F}))$ into itself, we get that $\mathfrak{F}_{1}$ provides an isometric automorphism of $\mathcal{E}_{2}(\mathbb{R};L_{2})$. 
	
	Moreover, \eqref{EQ: EmbracingFourierIdentityDelta} is equivalent to the commutativity of the diagram from \eqref{EQ: EmbracingSpaceIGammaFourierIdentity} for $C^{\gamma} = \delta_{\theta}$ and any $\theta \in [-\tau,0]$. Clearly, this is sufficient to show the commutativity for general $C^{\gamma}$ by using its approximations by $C^{\gamma_{k}}$ as in \eqref{EQ: ConvergenceDeltaFunc} since for any $\phi(\cdot) \in \mathcal{E}_{2}(\mathbb{R};L_{2})$ we have the pointwise convergence of $\mathcal{I}_{C^{\gamma_{k}}}\phi$ to $\mathcal{I}_{C^{\gamma}}\phi$ as $k \to \infty$. The proof is finished.
\end{proof}

\subsection{Spaces of adorned functions}
\label{SUBSEC: SpacedAdornedFunctions}
Now we are going to explore some particular classes of functions that belong to the embracing space $\mathcal{E}_{p}(0,T;L_{p})$ for $T>0$ or $T=\infty$. We will sometimes refer to the interval $[0,T]$ with $T=\infty$ meaning $[0,\infty)$.

Let $T>0$ and $x \colon [-\tau,T] \to \mathbb{F}$ be given. Then for any $t \in [0,T]$ we use the notation $x_{t}(\theta) := x(t+\theta)$, where $\theta \in [-\tau,0]$, to denote the $\tau$-history segment of $x(\cdot)$ at $t$.

Let us fix a nonnegative continuous function $\rho \colon [0,+\infty) \to \mathbb{R}_{\geq 0}$. We assume that for some constant $\rho_{0}=\rho_{0}(\rho,\tau)$ we have 
\begin{equation}
	\label{EQ: WeightFunctionProperty}
	\rho(t+s) \leq \rho_{0} \cdot \rho(t) \text{ for all } t \geq 0 \text{ and } s \in [0,\tau].
\end{equation}
Under such conditions, $\rho(\cdot)$ is called a \textit{weight function}. If $\rho(t) > 0$ for all $t \geq 0$, we say that $\rho(\cdot)$ is \textit{positive}. In further sections we will take $\rho(t) = e^{\nu t}$ for some fixed $\nu \in \mathbb{R}$ and $t \geq 0$.

For a given $T>0$ and $x \colon [-\tau, T] \to \mathbb{F}$ we define a function $\phi_{x,\rho}(\cdot)$ on $[0,T]$ as
\begin{equation}
	\label{EQ: WeightedWindowDefinition}
	\phi_{x,\rho}(t) := \rho(t)x_{t} \text{ for all } t \in [0,T].
\end{equation}
We are interested in $\phi_{x,\rho}(\cdot)$ corresponding to $x(\cdot)$ from some particular spaces. Namely, it is not hard to see the following implications (here $p \geq 1$)
\begin{equation}
	\label{EQ: WindowFunctionsDifferentSpaces}
	\begin{split}
		&x(\cdot) \in C([-\tau,T];\mathbb{F}) \Rightarrow \phi_{x,\rho}(\cdot) \in C([0,T];C([-\tau,0];\mathbb{F})),\\
		&x(\cdot) \in L_{p}(-\tau,T;\mathbb{F}) \Rightarrow \phi_{x,\rho}(\cdot) \in C([0,T];L_{p}(-\tau,0;\mathbb{F})),\\
		&x(\cdot) \in W^{1,p}(-\tau,T;\mathbb{F}) \Rightarrow \phi_{x,\rho}(\cdot) \in  C([0,T];W^{1,p}(-\tau,0;\mathbb{F}))
	\end{split}
\end{equation}
are satisfied since the weight function $\rho(\cdot)$ is assumed to be continuous and the group of translations in $L_{p}(\mathbb{R}; \mathbb{F})$ is strongly continuous.

We use $|\cdot|_{\mathbb{F}}$ and $|\cdot|_{\mathbb{M}_{\gamma}}$ to denote the norms in $\mathbb{F}$ and $\mathbb{M}_{\gamma}$ respectively. Let us start with the following lemma.
\begin{lemma}
	\label{LEM: DelayMeasuEstimate}
	For all $T>0$, $p \geq 1$, any $x(\cdot) \in C([-\tau,T];\mathbb{F})$ and $\phi_{x,\rho}(\cdot)$ associated with $x(\cdot)$ via \eqref{EQ: WeightedWindowDefinition} we have
	\begin{equation}
		\label{EQ: FuncLemmaLPIneq}
		\left(\int_{0}^{T} |\delta_{\theta}\phi_{x,\rho}(t)|^{p}_{\mathbb{M}_{\gamma}} dt\right)^{1/p} \leq \kappa(\rho,\tau) \cdot \left( \int_{-\tau}^{0}|x(t)|^{p}_{\mathbb{F}}dt + \int_{0}^{T} |\rho(t) x(t)|^{p}_{\mathbb{F}} \right)^{1/p} \text{ for all } \theta \in [-\tau,0],
	\end{equation}
    where $\kappa(\rho,\tau)$ is given by \eqref{EQ: WeightKappaDefinition}.
\end{lemma}
\begin{proof}
	Indeed, for $x(\cdot)$ and $\phi_{x,\rho}(\cdot)$ as in the statement and any $\theta \in [-\tau,0]$ we have
	\begin{equation}
		\label{EQ: LemmaAdornedEstimateDeltaFunc}
		\begin{split}
			\int_{0}^{T}|\delta_{\theta}\phi_{x,\rho}(t)|^{p}_{\mathbb{M}_{\gamma}} dt &= \int_{0}^{T}|\rho(t) x(t+\theta)|^{p}_{\mathbb{F}}dt = \int_{-\tau_{0}}^{T-\tau_{0}}|\rho(t-\theta)x(t)|^{p}_{\mathbb{F}}dt \leq \\ &\leq  \kappa(\rho,\tau)^{p} \cdot \left( \int_{-\tau}^{0}|x(t)|^{p}_{\mathbb{F}}dt + \int_{0}^{T} |\rho(t) x(t)|^{p}_{\mathbb{F}} \right)dt,
		\end{split}
	\end{equation}
	where 
	\begin{equation}
		\label{EQ: WeightKappaDefinition}
		\kappa(\rho,\tau) = \max\left\{ \rho_{0}(\rho,\tau), \max\limits_{t \in [0,\tau]} \rho(t) \right\}.
	\end{equation}
	The proof is finished.
\end{proof}

For $p \geq 1$ and $0 < T < \infty$ we define the space $\mathcal{Y}^{p}_{\rho}(0,T;L_{p}(-\tau,0;\mathbb{F}))$ consisting of all functions $\phi_{x,\rho}(\cdot)$ on $[0,T]$ given by \eqref{EQ: WeightedWindowDefinition} with some $x(\cdot) \in L_{p}(-\tau,T;\mathbb{F})$. We define $\mathcal{Y}^{p}_{\rho}(0,\infty;L_{p}(-\tau,0;\mathbb{F}))$ as the space of all $\phi_{x,\rho}(\cdot)$ as in \eqref{EQ: WeightedWindowDefinition} with $T=\infty$, $x(\cdot) \in L_{p,loc}([-\tau,\infty);\mathbb{F})$ and $\rho(\cdot) x(\cdot) \in L_{p}(0,\infty;\mathbb{F})$.

Recall that $\phi(t)$ must depend continuously on $t \in [0,T]$ in $L_{p}(-\tau,0;\mathbb{F})$ (see \eqref{EQ: WindowFunctionsDifferentSpaces}). We call any such $\phi_{x,\rho}(\cdot)$ a $\rho$-\textit{adorned} $L_{p}(-\tau,0;\mathbb{F})$-valued function on $[0,T]$ or simply that $\phi_{x,\rho}(\cdot)$ is $\rho$-adorned. Sometimes we will say that $\phi_{x,\rho}(\cdot)$ is a $\rho$-\textit{adornment} of $x(\cdot)$.

We suppose that $\rho(\cdot)$ is positive and endow the space $\mathcal{Y}^{p}_{\rho}(0,T;L_{p}(-\tau,0;\mathbb{F}))$ of $\rho$-adorned functions with the weighted $L_{p}$-norm of $x(\cdot)$ motivated by \eqref{EQ: FuncLemmaLPIneq} as
\begin{equation}
	\label{EQ: NorminHTP}
	\| \phi_{x,\rho}(\cdot) \|_{\mathcal{Y}^{p}_{\rho}(0,T;L_{p}(-\tau,0;\mathbb{F}))} := \left( \int_{-\tau}^{0}|x(\theta)|^{p}_{\mathbb{F}}d\theta +  \int_{0}^{T}|\rho(t)x(t)|^{p}_{\mathbb{F}}dt\right)^{1/p}.
\end{equation}
Since we assumed that $\rho(\cdot)$ is positive, we have the uniqueness of $x(\cdot)$ representing $\phi_{x,\rho}(\cdot)$ via \eqref{EQ: WeightedWindowDefinition} and, consequently, \eqref{EQ: NorminHTP} is well-defined. This makes $\mathcal{Y}^{p}_{\rho}(0,T;L_{p}(-\tau,0;\mathbb{F}))$ a Banach space for $p \geq 1$ and a Hilbert space for $p=2$.

For brevity, we will often write, $\mathcal{Y}^{p}_{\rho}(0,T;L_{p})$ instead of $\mathcal{Y}^{p}_{\rho}(0,T;L_{p}(-\tau,0;\mathbb{F}))$.

Clearly, the subspace of $\phi_{x,\rho}(\cdot)$ with $x(\cdot) \in C([-\tau,T];\mathbb{F})$ is dense in $\mathcal{Y}^{p}_{\rho}(0,T;L_{p})$. Thus, Lemma \ref{LEM: DelayMeasuEstimate} immediately implies the following.
\begin{lemma}
	\label{LEM: EmbeddingAdornedIntoEmbracingNeutral}
	Suppose $\rho(\cdot)$ is positive. Then for $T>0$ or $T=\infty$ and $p \geq 1$ there is a natural embedding of the space $\mathcal{Y}^{p}_{\rho}(0,T;L_{p}(-\tau,0;\mathbb{F}))$ into $\mathcal{E}_{p}(0,T;L_{p}(-\tau,0;\mathbb{F}))$ such that
	\begin{equation}
		\| \phi_{x,\rho}(\cdot) \|_{\mathcal{E}_{p}(0,T;L_{p}(-\tau,0;\mathbb{F}))} \leq \kappa(\rho,\tau) \cdot \| \phi_{x,\rho}(\cdot) \|_{\mathcal{Y}^{p}_{\rho}(0,T;L_{p}(-\tau,0;\mathbb{F}))},
	\end{equation}
     where $\kappa(\rho,\tau)$ is given by \eqref{EQ: WeightKappaDefinition}.
\end{lemma}

From Lemma \ref{LEM: EmbeddingAdornedIntoEmbracingNeutral} and Theorem \ref{TH: EmbracingSpaceFucntionalIGammaNeutral} we obtain the following.
\begin{theorem}
	\label{TH: OperatorIcAdornedLp}
	Suppose $\rho(\cdot)$ is positive and let $\gamma$ and $C^{\gamma}$ be as in \eqref{EQ: OperatorCgammaDefinitionNeutral}. Then for any $T>0$ or $T=\infty$ and $p \geq 1$ there exists a bounded linear operator
	\begin{equation}
		\mathcal{I}_{C^{\gamma}} \colon \mathcal{Y}^{p}_{\rho}(0,T;L_{p}(-\tau,0;\mathbb{F})) \to L_{p}(0,T;\mathbb{M}_{\gamma})
	\end{equation}
	with the norm not exceeding $\kappa(\rho,\tau) \cdot \operatorname{Var}_{[-\tau,0]}(\gamma)$ and such that for any $\phi_{x,\rho}(\cdot) \in \mathcal{Y}^{p}_{\rho}(0,T;L_{p}(-\tau,0;\mathbb{F}))$ with $x(\cdot) \in C([-\tau,T];\mathbb{F})$ we have
	\begin{equation}
		\label{EQ: OperatorICgammaAdornedContinuousIdentity}
		(\mathcal{I}_{C^{\gamma}}\phi_{x,\rho})(t) = C^{\gamma}\phi_{x,\rho}(t) \text{ for all } t \in [0,T].
	\end{equation}
\end{theorem}

Now let us for $T>0$ consider the space $\mathcal{Y}^{p}_{\rho}(0,T;W^{1,p}(-\tau,0;\mathbb{F}))$ consisting of all $\phi_{x,\rho}$ as in \eqref{EQ: WeightedWindowDefinition} with $x(\cdot) \in W^{1,p}(-\tau,T;\mathbb{F})$. For brevity, we will often write $\mathcal{Y}^{p}_{\rho}(0,T;W^{1,p})$ to denote that space.

Let us assume that $\rho(\cdot)$ is positive, $\rho(\cdot) \in C^{1}([0,\infty); \mathbb{R})$ and the derivative $\dot{\rho}(\cdot)$ of $\rho(\cdot)$ either is also a positive weight function or identically zero. In this case, for brevity, we say that $\rho$ is a \textit{proper $C^{1}$-weight}. We endow the space $\mathcal{Y}^{p}_{\rho}(0,T;W^{1,p})$ with the norm
\begin{equation}
	\label{EQ: WindowSobolevSpaceNorm}
	\| \phi_{x,\rho} (\cdot) \|^{p}_{\mathcal{Y}^{p}_{\rho}(0,T;W^{1,p}(-\tau,0;\mathbb{F}))} := \| \phi_{x,\rho}(\cdot) \|^{p}_{\mathcal{Y}^{p}_{\rho}(0,T;L_{p})} + \| \phi_{\dot{x},\rho}(\cdot) \|^{p}_{\mathcal{Y}^{p}_{\rho}(0,T;L_{p})} + \| \phi_{x,\dot{\rho}}(\cdot) \|^{p}_{\mathcal{Y}^{p}_{\rho}(0,T;L_{p})}.
\end{equation}
Note that for $\dot{\rho} \equiv 0$ the last term in \eqref{EQ: WindowSobolevSpaceNorm} vanishes. For $T=\infty$ we define $\mathcal{Y}^{p}_{\rho}(0,\infty;W^{1,p})$ by naturally requiring the norm in \eqref{EQ: WindowSobolevSpaceNorm} to be finite.  Clearly, $\mathcal{Y}^{p}_{\rho}(0,T;W^{1,p})$ endowed with the norm from \eqref{EQ: WindowSobolevSpaceNorm} becomes a Banach space for $p \geq 1$ and a Hilbert space for $p=2$ by the same reasoning as for \eqref{EQ: NorminHTP}.

From the following theorem, we have, in particular, that $\mathcal{Y}^{p}_{\rho}(0,T;W^{1,p})$ is naturally continuously embedded into $\mathcal{E}_{p}(0,T;W^{1,p})$. This puts the theorem into the context of Theorem \ref{TH: EmbracingSobolevDifferentiabilityTheorem}.
\begin{theorem}
	\label{TH: ICgammaSobolevAdorned}
	Suppose $\rho(\cdot)$ is a proper $C^{1}$-weight and let $\gamma$ and $C^{\gamma}$ be as in \eqref{EQ: OperatorCgammaDefinitionNeutral}. Then for any $T>0$ or $T=\infty$ and $p \geq 1$ for each $x \in W^{1,p}(-\tau,T;\mathbb{F})$ we have $(\mathcal{I}_{C^{\gamma}} \phi_{x,\rho})(\cdot) \in W^{1,p}(0,T;\mathbb{M}_{\gamma})$ and
	\begin{equation}
		\label{EQ: ICgammaSobolevAdornedDerivativeIdentity}
		\frac{d}{dt}(\mathcal{I}_{C^{\gamma}} \phi_{x,\rho})(t) = (\mathcal{I}_{C^{\gamma}} \phi_{\dot{x},\rho})(t) + (\mathcal{I}_{C^{\gamma}}\phi_{x,\dot{\rho}})(t) \text{ for almost all } t \in (0,T).
	\end{equation} 
    In particular, the operator
	\begin{equation}
		\label{EQ: ICgammaSobolevAdorned}
		\mathcal{I}_{C^{\gamma}} \colon \mathcal{Y}^{p}_{\rho}(0,T;W^{1,p}(-\tau,0;\mathbb{F})) \to W^{1,p}(0,T;\mathbb{M}_{\gamma})
	\end{equation}
	is well-defined and bounded with the norm not exceeding $\operatorname{Var}_{[-\tau,0]}(\gamma)$ times a constant that depends only on $\rho$, $\dot{\rho}$ and $\tau$ (see Theorem \ref{TH: OperatorIcAdornedLp}).
\end{theorem}
\begin{proof}
	We show the statement by considering $C^{\gamma} = \delta_{-\tau_{0}}$ for some $-\tau_{0} \in [-\tau,0]$ first. Since $W^{1,p}(-\tau,T;\mathbb{F})$ is naturally continuously embedded into $C([-\tau,T];\mathbb{F})$, from \eqref{EQ: OperatorICgammaAdornedContinuousIdentity} and the Leibniz rule we have
	\begin{equation}
		\frac{d}{dt}(C^{\gamma} \phi_{x,\rho}(t)) = \frac{d}{dt}\left( \rho(t) x(t-\tau_{0}) \right) = (\mathcal{I}_{C^{\gamma}}\phi_{x,\dot{\rho}})(t) + (\mathcal{I}_{C^{\gamma}}\phi_{\dot{x},\rho})(t) \text{ for almost all } t \in (0,T)
	\end{equation}
    and, in particular, $(\mathcal{I}_{C^{\gamma}} \phi_{x,\rho})(\cdot) \in W^{1,p}(0,T;\mathbb{F})$ due to Theorem \ref{TH: OperatorIcAdornedLp}. This shows \eqref{EQ: ICgammaSobolevAdornedDerivativeIdentity}, gives the norm estimate and the embedding into $\mathcal{E}_{p}(0,T;W^{1,p})$.
    
    Now the proof can be finished by considering approximations of general $C^{\gamma}$ by operator-linear combinations $C^{\gamma_{k}}$ of $\delta$-functionals as in \eqref{EQ: ConvergenceDeltaFunc}. Namely, it is sufficient to note that, by the uniform boundedness and convergence on a dense subspace, we have for each $y \in L_{p}(-\tau,T;\mathbb{F})$ the pointwise convergence in $L_{p}(0,T;\mathbb{M}_{\gamma})$ as
    \begin{equation}
    	\mathcal{I}_{C^{\gamma_{k}}}\phi_{y,\rho} \to \mathcal{I}_{C^{\gamma}}\phi_{y,\rho} \text{ as } k \to \infty.
    \end{equation}
    Thus, \eqref{EQ: ICgammaSobolevAdornedDerivativeIdentity} is preserved for general $\gamma$ and the norm of $\mathcal{I}_{C^{\gamma}}$ from \eqref{EQ: ICgammaSobolevAdorned} can be estimated from that identity via Theorem \ref{TH: OperatorIcAdornedLp} and the Minkowski and H\"{o}lder inequalities. The proof is finished.
\end{proof}

\begin{remark}
	For $\rho \equiv 1$, Theorems \ref{TH: OperatorIcAdornedLp} and \ref{TH: ICgammaSobolevAdorned} are similar to the convolution theorems used in \cite{BurnsHerdmanStech1983} (see Lemma 2.4 therein). For general $\rho$, the results can be considered as theorems on weighted convolutions. Here we presented a simplified proof which is independent from abstract harmonic analysis (referred to in \cite{BurnsHerdmanStech1983}) and it embeds the result into a wider context which is more appropriate for further studying and generalization (see \cite{Anikushin2023Comp}).
\end{remark}

\subsection{Spaces of twisted functions}
\label{SUBSEC: SpacesTwistedFunctions}
Now we are going to introduce another class of functions with similar properties. For this, let $T_{1}(t)$, where $t \geq 0$, be the semigroup of left translates in $L_{p}(-\tau,0;\mathbb{F})$, i.e. for any $\psi \in L_{p}(-\tau,0;\mathbb{F})$ we have
\begin{equation}
	\label{EQ: DefinitionLeftTranslates}
	(T_{1}(t)\psi)(\theta) := \begin{cases}
		\psi(\theta+t), \text{ if } \theta+t \in [-\tau,0],\\
		0, \text{ otherwise}.
	\end{cases}\text{ for } \theta \in [-\tau,0] \text{ and } t \geq 0.
\end{equation}
It is well-known that $T_{1}(t)$, $t \geq 0$, is a $C_{0}$-semigroup in $L_{p}(-\tau,0;\mathbb{F})$ and its generator $A_{T_{1}}$ is the operator $\frac{d}{d\theta}$ with the domain 
\begin{equation}
	\label{EQ: OperatorAT1Domain}
	\mathcal{D}(A_{T_{1}}) = W^{1,2}_{0+}(-\tau,0;\mathbb{F}) := \{ \psi \in W^{1,p}(-\tau,0;\mathbb{F}) \ | \ \psi(0) = 0 \}.
\end{equation}

For $T>0$ and $y \in L_{p}(0,T;L_{p}(-\tau,0;\mathbb{F}))$ we consider the $L_{p}(-\tau,0;\mathbb{F})$-valued function $\psi_{y,\rho}(\cdot)$ on $[0,T]$ given by
\begin{equation}
	\label{EQ: TwistedFunctionNeutralDefinition}
	\psi_{y,\rho}(t) = \rho(t) \int_{0}^{t}T_{1}(t-s) y(s) ds \text{ for all } t \in [0,T].
\end{equation}
We call such $\psi_{y,\rho}$ a $\rho$-\textit{twisted} $L_{p}(-\tau,0;\mathbb{F})$-valued function on $[0,T]$ or simply say that $\psi_{y,\rho}$ is $\rho$-twisted. Sometimes we will say that $\psi_{y,\rho}$ is a $\rho$-\textit{twisting} of $y$. It is not hard to see that we have $\psi_{y,\rho}(\cdot) \in C([0,T];L_{p}(-\tau,0;\mathbb{F}))$.

It turns out that $\psi_{y,\rho}(\cdot)$ determines $y(\cdot)$ via \eqref{EQ: TwistedFunctionNeutralDefinition} uniquely for positive $\rho(\cdot)$. This is contained in the following lemma.
\begin{lemma}
	\label{LEM: TwistedFuncUniquenessY}
	Suppose for $T>0$, $p \geq 1$ and some $y(\cdot) \in L_{p}(0,T;L_{p}(-\tau,0;\mathbb{F}))$ we have
	\begin{equation}
		\label{EQ: TwistedFunctionUniquenessZeroIndentity}
		\int_{0}^{t}T_{1}(t-s)y(s)ds = 0 \text{ for all } t \in [0,T].
	\end{equation}
    Then $y(t) = 0$ for all $t \in [0,T]$.
\end{lemma}
\begin{proof}
	Indeed, let us define $\widetilde{y}(s,\theta) := y(s)(\theta)$ for $s \in [0,T]$ and almost all $\theta \in [-\tau,0]$. Then, according to \eqref{EQ: DefinitionLeftTranslates}, we have that \eqref{EQ: TwistedFunctionUniquenessZeroIndentity} is equivalent to
	\begin{equation}
		\label{EQ: TwistedFunctionUniquenessIntegralIdentity}
		\int\limits_{\max\{ 0, t + \theta \}}^{t} \widetilde{y}(s,\theta+t-s)ds = 0 \text{ for all } t \in [0,T] \text{ and almost all } \theta \in [-\tau,0].
	\end{equation}
    We have to show that $\widetilde{y}(\cdot,\cdot) = 0$ almost everywhere on $[0,T] \times [-\tau,0]$. Let $l(t,\theta)$ be the line segment in $[0,T] \times [-\tau,0]$ over which we integrate in \eqref{EQ: TwistedFunctionUniquenessIntegralIdentity}. Then we have
    \begin{equation}
    	\label{EQ: TwistedFunctionUniquenessUnionParallelLines}
    	[0,T] \times [-\tau,0] = \bigcup_{t \in [0,T-\tau] } l(t, -\tau) \cup \bigcup_{\theta \in [-\tau,0]} l(T,\theta),
    \end{equation}
    where the union is taken over distinct parallel segments. Moreover, since $l(t-h,\theta+h) \subset l(t,\theta)$ for all $ 0 \leq h \leq \min\{ -\theta, t \}$, from \eqref{EQ: TwistedFunctionUniquenessIntegralIdentity} we have that the integral of $\widetilde{y}$ over each segment $l \subset l(t,\theta)$ vanishes for all $t \in [0,T]$ and almost all $\theta \in [-\tau,0]$. Now it is a standard exercise\footnote{Here we mean the following. Suppose for some $a < b$ and a Lebesgue integrable real-valued function $f(\cdot)$ on $[a,b]$ we have $\int_{a}^{c}f(x)dx = 0$ for any $c \in [a,b]$. Then $f(x) = 0$ for almost all $x \in [a,b]$. This generalizes to functions taking values in a separable Hilbert space $\mathbb{F}$ by applying the scalar result to the Fourier coefficients over some orthonormal basis in $\mathbb{F}$.} from the measure theory to show that $\widetilde{y}$ must vanish on such $l(t,\theta)$. From \eqref{EQ: TwistedFunctionUniquenessUnionParallelLines} we have that $\widetilde{y}$ vanishes almost everywhere in $[0,T] \times [-\tau,0]$ and, consequently, $y(t) = 0$ for all $t \in [0,T]$. The proof is finished.
\end{proof}

Assuming that the weight function $\rho(\cdot)$ is positive, for $T>0$ we define the space $\mathcal{T}^{p}_{\rho}(0,T;L_{p}(-\tau,0;\mathbb{F}))$ of all $\rho$-twisted functions $\psi_{y,\rho}(\cdot)$ and endow it with the norm 
\begin{equation}
	\label{EQ: TwistedLpNorm}	
	\| \psi_{y,\rho}(\cdot) \|_{ \mathcal{T}^{p}_{\rho}(0,T;L_{p}(-\tau,0;\mathbb{F})) } := \left( \int_{0}^{T}\| \rho(t) y(t) \|^{p}_{L_{p}(-\tau,0;\mathbb{F})}dt\right)^{1/p},
\end{equation}
which is well-defined due to Lemma \ref{LEM: TwistedFuncUniquenessY}. Moreover, for $T=\infty$ we define the space $\mathcal{T}^{p}_{\rho}(0,\infty;L_{p}(-\tau,0;\mathbb{F}))$ as the space of all function $\psi_{y,\rho}(\cdot)$ on $[0,\infty)$ as in \eqref{EQ: TwistedFunctionNeutralDefinition} such that $\rho(\cdot)y(\cdot) \in L_{2}(0,\infty;L_{p}(-\tau,0;\mathbb{F}))$. It is clear that the space  $\mathcal{T}^{p}_{\rho}(0,T;L_{p}(-\tau,0;\mathbb{F}))$ endowed with the norm from \eqref{EQ: TwistedLpNorm} becomes a Banach space for $p \geq 1$ and a Hilbert space for $p=2$.

For brevity, we will sometimes use $\mathcal{T}^{p}_{\rho}(0,T;L_{p})$ to denote the space just introduced.

It turns out that the spaces $\mathcal{Y}^{p}_{\rho}(0,T;L_{p})$ and $\mathcal{T}^{p}_{\rho}(0,T;L_{p})$ for $p>1$ are linearly independent. This is caused by the circumstance that each $\psi_{y,\rho}(t)$ according to \eqref{EQ: TwistedFunctionNeutralDefinition} must have small $L_{p}$-norm near $0$ and the smallness is uniform in $t$. A proper development of the argument gives the following lemma.
\begin{lemma}
	\label{LEM: SumAdornedTwistedUniq}
	Suppose for $T>0$, $p>1$ and some $\phi_{x,\rho}(\cdot) \in \mathcal{Y}^{p}_{\rho}(0,T;L_{p})$ and $\psi_{y,\rho}(\cdot) \in \mathcal{T}^{p}_{\rho}(0,T;L_{p})$ we have
	\begin{equation}
		\label{EQ: AdornedTwistedNeutralLinearInd}
		\phi_{x,\rho}(t) + \psi_{y,\rho}(t) = 0 \text{ for all } t \in [0,T].
	\end{equation}
    Then $\phi_{x,\rho}(t) = \psi_{y,\rho}(t) = 0$ for all $t \in [0,T]$. 
\end{lemma}
\begin{proof}
	Since $\rho(\cdot)$ is positive, it is sufficient to consider the case $\rho(\cdot) \equiv 1$. Note that we always have $\psi_{y,\rho}(0) = 0$ and, consequently, from \eqref{EQ: AdornedTwistedNeutralLinearInd} we get that $x(s) = 0$ for almost all $s \in (-\tau,0)$. For $h \in (0,\tau)$ let us consider the set $\mathcal{D}^{T}_{h} = \bigcup_{k = 1}^{\lfloor \frac{T}{h} \rfloor} (k-1,k)h$. Clearly, we have
    \begin{equation}
    	\label{EQ: AdornedTwistedIndependenceXIntApp}
    	\int_{-\tau}^{T}|x(t)|^{p}_{\mathbb{F}}dt = \lim\limits_{h \to 0+} \int_{\mathcal{D}^{T}_{h}}|x(t)|^{p}_{\mathbb{F}}dt
    \end{equation}
    satisfied since $(0,T) \setminus \mathcal{D}^{T}_{h}$ has Lebesgue measure $<h$. 
    
    From \eqref{EQ: DefinitionLeftTranslates} and the decomposition for $t \geq h$
    \begin{equation}
    	\label{EQ: AdornedTwistedIndependenceTwistedDecomp}
    	\psi_{y,\rho}(t) = \int_{0}^{t-h} T_{1}(t-s)y(s)ds + \int_{t-h}^{t}T_{1}(t-s)y(s)ds
    \end{equation}
    it is clear that the values of $\psi_{y,\rho}(t)$ on $(-h,0)$ are determined by the second summand in \eqref{EQ: AdornedTwistedIndependenceTwistedDecomp}. Using this, \eqref{EQ: AdornedTwistedNeutralLinearInd} and the H\"{o}lder inequality, we obtain
    \begin{equation}
    	\label{EQ: AdornedTwistedIndependenceLastEstimate}
    	\int_{\mathcal{D}^{T}_{h}}|x(t)|^{p}_{\mathbb{F}}dt \leq \sum_{k=1}^{\lfloor \frac{T}{h} \rfloor} \left\|\int_{(k-1)h}^{kh}T_{1}(t-s)y(s)ds\right\|^{p}_{L_{p}} \leq h^{p-1} \sum_{k=1}^{\lfloor \frac{T}{h} \rfloor} \int_{(k-1)h}^{kh} \|y(s)\|^{p}_{L_{p}}ds \leq h^{p-1} \int_{0}^{T} \|y(t)\|^{p}_{L_{p}}dt,
    \end{equation}
    where $L_{p}$ stands for $L_{p}(-\tau,0;\mathbb{F})$. Note that we assumed $p>1$.
    
    Combining \eqref{EQ: AdornedTwistedIndependenceLastEstimate} with \eqref{EQ: AdornedTwistedIndependenceXIntApp}, we get that $x(\cdot)$ vanishes almost everywhere in $(-\tau,T)$ and, consequently, $\psi_{y,\rho}(t) = \phi_{x,\rho}(t) = 0$ for all $t \in [0,T]$. The proof is finished.
\end{proof}

Now consider the space $C_{0+}([-\tau,0];\mathbb{F})$ of continuous $\mathbb{F}$-valued functions $\psi$ on $[-\tau,0]$ such that $\psi(0) = 0$. Note that for $y(\cdot)$ taken from $C([0,T];C_{0+}([-\tau,0];\mathbb{F}))$ we have that $\psi_{y,\rho}(\cdot)$ also belongs to the same space since the integral \eqref{EQ: TwistedFunctionNeutralDefinition} can be considered as the integral of a $C_{0+}([-\tau,0];\mathbb{F})$-valued continuous function. We have the following lemma.
\begin{lemma}
	\label{LEM: TwistedICgammaEstimateOnCont}
	For all $T>0$, $p \geq 1$, any $y(\cdot) \in C([0,T];C_{0+}([-\tau,0];\mathbb{F}))$ and $\psi_{y,\rho}(\cdot)$ associated with $y(\cdot)$ via \eqref{EQ: TwistedFunctionNeutralDefinition} we have
	\begin{equation}
		\left(\int_{0}^{T}|\delta_{\theta}\psi_{y,\rho}(t)|^{p}_{\mathbb{M}_{\gamma}}dt\right)^{1/p} \leq \rho_{0} \tau^{1-1/p} \cdot \left(\int_{0}^{T}\| \rho(t) y(t) \|^{p}_{L_{p}(-\tau,0;\mathbb{F})}dt\right)^{1/p} \text{ for all } \theta \in [-\tau,0],
	\end{equation}
    where $\rho_{0}$ is given by \eqref{EQ: WeightFunctionProperty}.
\end{lemma}
\begin{proof}
	Let us take $-\tau_{0} \in [-\tau,0]$ and put $\widetilde{y}(s,\theta) := y(s)(\theta)$ for all $s \in [0,T]$ and $\theta \in [-\tau,0]$. Then from \eqref{EQ: DefinitionLeftTranslates} and \eqref{EQ: TwistedFunctionNeutralDefinition} we get
	\begin{equation}
		\label{EQ: CdeltaPsiYIntFormula}
		\delta_{-\tau_{0}}\psi_{y,\rho}(t) = \psi_{y,\rho}(t)(-\tau_{0}) = \rho(t) \int_{\max\{0,t-\tau_{0}\}}^{t}\widetilde{y}(s,-\tau_{0}+t-s)ds \text{ for all } t \in [0,T].
	\end{equation}
    From \eqref{EQ: WeightFunctionProperty} we get that $\rho(t) \leq \rho_{0} \cdot \rho(s)$ for all $t \in [0,T]$ and $s \in [ \max\{0,t-\tau_{0}\}, t]$. Using this, the H\"{o}lder inequality and monotonicity of the integral, we obtain
    \begin{equation}
    	\begin{split}
    		 \int_{0}^{T}|\delta_{-\tau_{0}}\psi_{y,\rho}(t)|^{p}_{\mathbb{M}_{\gamma}}dt = \int_{0}^{T}|\rho(t)|^{p} \left|\int_{\max\{0,t-\tau_{0}\}}^{t}\widetilde{y}(s,-\tau_{0}+t-s)ds\right|^{p}_{\mathbb{F}} dt \leq\\ \leq \rho^{p}_{0} \tau^{p-1} \int_{0}^{T}\int_{\max\{0,t-\tau_{0}\}}^{t}|\rho(s)\widetilde{y}(s,-\tau_{0}+t-s)|^{p}_{\mathbb{F}}ds dt \leq \rho^{p}_{0} \tau^{p-1} \int_{[0,T] \times [-\tau,0]} |\rho(s) \widetilde{y}(s,\theta)|^{p}_{\mathbb{F}} d\theta dt = \\ = \rho^{p}_{0} \tau^{p-1} \int_{0}^{T} \| \rho(t) y(t) \|^{p}_{L_{p}(-\tau,0;\mathbb{F})}dt,
    	\end{split}
    \end{equation}
    where in the last inequality we used the linear change of variables $(t,s) \mapsto (t,-\tau_{0}+t-s)$ which has the determinant equal to $-1$ and then we applied the monotonicity. The proof is finished.
\end{proof}

Clearly, the subspace of $\psi_{y,\rho}$ with $y(\cdot) \in C([0,T];C_{0+}([-\tau,0];\mathbb{F}))$ is dense in $\mathcal{T}^{p}_{\rho}(0,T;L_{p})$. Thus, from Lemma \ref{LEM: TwistedICgammaEstimateOnCont} we immediately obtain the following.
\begin{lemma}
	\label{LEM: EmbeddingTwistedIntoEmbracingNeutral}
	Suppose $\rho(\cdot)$ is positive. Then for $T>0$ or $T=\infty$ and $p \geq 1$ there is a natural embedding of the space $\mathcal{T}^{p}_{\rho}(0,T;L_{p}(-\tau,0;\mathbb{F}))$ into $\mathcal{E}_{p}(0,T;L_{p}(-\tau,0;\mathbb{F}))$ such that
	\begin{equation}
		\| \phi_{x,\rho}(\cdot) \|_{\mathcal{E}_{p}(0,T;L_{p}(-\tau,0;\mathbb{F}))} \leq \rho_{0} \tau^{1-1/p} \cdot \| \phi_{x,\rho}(\cdot) \|_{\mathcal{T}^{p}_{\rho}(0,T;L_{p}(-\tau,0;\mathbb{F}))},
	\end{equation}
	where $\rho_{0}$ is given by \eqref{EQ: WeightFunctionProperty}.
\end{lemma}

Using Lemma \ref{LEM: EmbeddingTwistedIntoEmbracingNeutral} and Theorem \ref{TH: EmbracingSpaceFucntionalIGammaNeutral}, we get the following.
\begin{theorem}
	\label{TH: OperatorIcTwistedLp}
	Suppose $\rho(\cdot)$ is positive and let $\gamma$ and $C^{\gamma}$ be as in \eqref{EQ: OperatorCgammaDefinitionNeutral}. Then for $T>0$ or $T = \infty$ and $p \geq 1$ there exists a bounded linear operator
	\begin{equation}
		\mathcal{I}_{C^{\gamma}} \colon \mathcal{T}^{p}_{\rho}(0,T;L_{p}(-\tau,0;\mathbb{F})) \to L_{p}(0,T;\mathbb{M}_{\gamma})
	\end{equation}
	with the norm not exceeding $\rho_{0} \cdot \tau^{1-1/p} \cdot \operatorname{Var}_{[-\tau,0]}(\gamma)$ and such that for any $\psi_{y,\rho}(\cdot) \in \mathcal{T}^{p}_{\rho}(0,T;L_{p}(-\tau,0;\mathbb{F}))$ with $y(\cdot)$ from $C([0,T];C_{0+}([-\tau,0];\mathbb{F}))$ we have
	\begin{equation}
		\label{EQ: OperatorICgammaTwistedContinuousIdentity}
		(\mathcal{I}_{C^{\gamma}}\psi_{y,\rho})(t) = C^{\gamma}\psi_{y,\rho}(t) \text{ for all } t \in [0,T].
	\end{equation}
\end{theorem}

Let us establish the following technical lemma.
\begin{lemma}
\label{LEM: TwistedCalculationDeltaGeneralFunc}
Suppose $y(\cdot) \in L_{p}(0,T;L_{p}(-\tau,0;\mathbb{F}))$ and $-\tau_{0} \in [-\tau,0]$ is fixed. Then we have
\begin{equation}
	\label{EQ: TwistedFunctionDeltaCalculationLemma}
	(\mathcal{I}_{\delta_{-\tau_{0}}}\psi_{y,\rho})(t) = \rho(t) \int_{\max\{ 0,t-\tau_{0} \}}^{t}y(s)(-\tau_{0}+t-s)ds \text{ for almost all } t \in [0,T].
\end{equation}	
\end{lemma}
\begin{proof}
	We approximate $y(\cdot)$ in $L_{p}(0,T;L_{p}(-\tau,0;\mathbb{F}))$ by a sequence of $y_{k}(\cdot)$, where $k=1,2,\ldots$, from $C([0,T];C_{0+}([-\tau,0];\mathbb{F}))$. Denote $\hat{y}_{k}(t,s) := y_{k}(s)(-\tau_{0}+t-s)$ and $\hat{y}(t,s) := y(s)(-\tau_{0}+t-s)$ for all (resp. almost all) $t \in [0,T]$ and $s \in [\max\{ 0,t-\tau_{0} \},t]$. Note that $\hat{y}(t,\cdot)$ is a well-defined element of $L_{p}$ on the corresponding line segment for almost all $t \in [0,T]$ and thus for such $t$ the integral in \eqref{EQ: TwistedFunctionDeltaCalculationLemma} is well-defined. From the convergence we get that $\hat{y}_{k}(t,\cdot)$ converges to $\hat{y}(t,\cdot)$ in that $L_{p}$ for almost all $t \in [0,T]$. Since \eqref{EQ: TwistedFunctionDeltaCalculationLemma} for $y_{k}$ is satisfied as in \eqref{EQ: CdeltaPsiYIntFormula}, we may pass to the limit as $k \to \infty$ to get the desired. The proof is finished.
\end{proof}

Let $W^{1,p}_{0+}(-\tau,0;\mathbb{F})$ be the subspace from \eqref{EQ: OperatorAT1Domain}. For $T>0$ we define the subspace $\mathcal{T}^{p}_{\rho}(0,T;W^{1,p}_{0+}(-\tau,0;\mathbb{F}))$ of $\mathcal{T}^{p}_{\rho}(0,T;L_{p})$ consisting of all $\psi_{y,\rho}(\cdot)$ such that $y(\cdot) \in L_{p}(0,T;W^{1,p}_{0+}(-\tau,0;\mathbb{F}))$. Assuming that $\rho(\cdot)$ is a proper $C^{1}$-weight (see above \eqref{EQ: WindowSobolevSpaceNorm}), we endow the space just introduced with the norm
\begin{equation}
	\label{EQ: TwistedSobolevSpaceNorm}
	\| \psi_{y,\rho} (\cdot) \|^{p}_{\mathcal{T}^{p}_{\rho}(0,T;W^{1,p}_{0+}(-\tau,0;\mathbb{F}))} := \| \psi_{y,\rho}(\cdot) \|^{p}_{\mathcal{T}^{p}_{\rho}(0,T;L_{p})} + \| \psi_{y',\rho}(\cdot) \|^{p}_{\mathcal{T}^{p}_{\rho}(0,T;L_{p})} + \| \psi_{y,\dot{\rho}}(\cdot) \|^{p}_{\mathcal{T}^{p}_{\rho}(0,T;L_{p})},
\end{equation}
where $y'(s) := \frac{d}{d\theta}y(s)$ for almost all $s \in [0,T]$ and $\frac{d}{d\theta}$ is the derivative in $W^{1,p}(-\tau,0;\mathbb{F})$. For $T=\infty$ we naturally require that all the norms in \eqref{EQ: TwistedSobolevSpaceNorm} are finite. Clearly, this makes $\mathcal{T}^{p}_{\rho}(0,T;W^{1,p}_{0+}(-\tau,0;\mathbb{F}))$ a Banach space for $p \geq 1$ and a Hilbert space for $p=2$.

We have an analog of Theorem \ref{TH: ICgammaSobolevAdorned} that also establishes the embedding of $\mathcal{T}^{p}_{\rho}(0,T;W^{1,p}_{0+})$ into the embracing space $\mathcal{E}_{p}(0,T;W^{1,p})$ and puts the result into the context of Theorem \ref{TH: EmbracingSobolevDifferentiabilityTheorem}.
\begin{theorem}
	\label{TH: ICgammaSobolevTwisted}
	Suppose $\rho(\cdot)$ is a proper $C^{1}$-weight and let $\gamma$ and $C^{\gamma}$ be as in \eqref{EQ: OperatorCgammaDefinitionNeutral}. Then for $T>0$ or $T=\infty$, $p \geq 1$ and any $y(\cdot) \in L_{p}(0,T;W^{1,p}_{0+}(-\tau,0;\mathbb{F}))$ we have $(\mathcal{I}_{C^{\gamma}}\psi_{y,\rho})(\cdot) \in W^{1,p}(0,T;\mathbb{F})$. Moreover,
	\begin{equation}
		\label{EQ: TwistedICgammaSobolevIdentity}
		\frac{d}{dt}(\mathcal{I}_{C^{\gamma}}\psi_{y,\rho})(t) = \mathcal{I}_{C^{\gamma}}\psi_{y,\dot{\rho}}(t) + \mathcal{I}_{C^{\gamma}}\psi_{y',\rho}(t) + \rho(t)C^{\gamma}y(t) \text{ for almost all } t \in (0,T),
	\end{equation}
    where $y'$ as in \eqref{EQ: TwistedSobolevSpaceNorm}. In particular,
    \begin{equation}
    	\mathcal{I}_{C^{\gamma}} \colon \mathcal{T}^{p}_{\rho}(0,T;W^{1,p}_{0+}(-\tau,0;\mathbb{F})) \to W^{1,p}(0,T;\mathbb{M}_{\gamma})
    \end{equation}
    is a bounded linear operator with the norm not exceeding $\operatorname{Var}_{[-\tau,0]}(\gamma)$ times a constant that depends only on $\rho$, $\dot{\rho}$ and $\tau$ (see Theorem \ref{TH: OperatorIcTwistedLp}).
\end{theorem}
\begin{proof}
	We firstly show the statement for $C^{\gamma} = \delta_{-\tau_{0}}$ for some $-\tau_{0} \in [-\tau,0]$. Moreover, it is sufficient to show \eqref{EQ: TwistedICgammaSobolevIdentity} for a dense subspace and finite $T>0$. For this, we consider $y(\cdot) \in C([0,T];W^{1,p}_{0+}(-\tau,0;\mathbb{F}))$. Then in terms of \eqref{EQ: CdeltaPsiYIntFormula} we have
	\begin{equation}
		\label{EQ: TwistedSobolevSpaceLemmaIdentity}
		\frac{d}{dt}\left( \delta_{-\tau_{0}} \psi_{y,\rho}(t) \right) = \delta_{-\tau_{0}}\psi_{y,\dot{\rho}}(t) + \rho(t)\frac{d}{dt}\int_{\max\{0,t-\tau_{0}\}}^{t}\widetilde{y}(s,-\tau_{0}+t-s)ds. 
	\end{equation}
    Since $y(t)(0) = 0$ for all $t \in [0,T]$, for almost all $t \in [0,T]$ from Lemma \ref{LEM: TwistedCalculationDeltaGeneralFunc} we have
    \begin{equation}
    	\begin{split}
    		\rho(t)\frac{d}{dt}\int_{\max\{0,t-\tau_{0}\}}^{t}\widetilde{y}(s,-\tau_{0}+t-s)ds &= \rho(t) y(t)(-\tau_{0}) + \\ + \rho(t)\int_{\max\{0,t-\tau_{0}\}}^{t} \left(\frac{d}{d\theta}y(s)\right)(-\tau_{0}+t-s)ds
    		&= (\mathcal{I}_{\delta_{-\tau_{0}}}\psi_{y',\rho})(t) + \rho(t) \delta_{-\tau_{0}}y(t).
    	\end{split}
    \end{equation}
    Thus we showed \eqref{EQ: TwistedICgammaSobolevIdentity} for $C^{\gamma} = \delta_{-\tau_{0}}$. Now one can use approximations of general $C^{\gamma}$ by operator-linear combinations $C^{\gamma_{k}}$ of $\delta$-functionals as in \eqref{EQ: ConvergenceDeltaFunc} and the pointwise convergence of $\mathcal{I}_{C^{\gamma_{k}}}$ to $\mathcal{I}_{C^{\gamma}}$ in $\mathcal{T}^{p}_{\rho}(0,T;L_{p})$. The proof is finished.
\end{proof}

\subsection{Spaces of agalmanated functions}
\label{SUBSEC: AgalmanatedFunctionsNeutral}

Now for $T>0$ or $T = \infty$ and $p \geq 1$ we introduce the space
\begin{equation}
	\label{EQ: SpaceAgalmanatedDefinition}
	\mathcal{A}^{p}_{\rho}(0,T;L_{p}(-\tau,0;\mathbb{F})) = \mathcal{Y}^{p}_{\rho}(0,T;L_{p}(-\tau,0;\mathbb{F})) \oplus \mathcal{T}^{p}_{\rho}(0,T;L_{p}(-\tau,0;\mathbb{F})),
\end{equation}
where $\oplus$ is the outer direct sum. Thus $\mathcal{A}^{p}_{\rho}(0,T;L_{p}(-\tau,0;\mathbb{F}))$ is the space of pairs $(\phi_{x,\rho},\psi_{y,\rho})$ with $\phi_{x,\rho}(\cdot) \in \mathcal{Y}^{p}_{\rho}(0,T;L_{p})$ (see \eqref{EQ: NorminHTP}) and $\psi_{y,\rho}(\cdot) \in \mathcal{T}^{p}_{\rho}(0,T;L_{p})$ (see \eqref{EQ: TwistedLpNorm}). For brevity, we will often write $\mathcal{A}^{p}_{\rho}(0,T;L_{p})$ to denote the space just introduced. 

In the above terms, we endow $\mathcal{A}^{p}_{\rho}(0,T;L_{p})$ with the norm given by
\begin{equation}
	\| (\phi_{x,\rho}(\cdot),\psi_{y,\rho}(\cdot)) \|^{p}_{\mathcal{A}^{p}_{\rho}(0,T;L_{p})} := \|\phi_{x,\rho}(\cdot)\|^{p}_{\mathcal{Y}^{p}_{\rho}(0,T;L_{p})} + \|\psi_{y,\rho}(\cdot)\|^{p}_{\mathcal{T}^{p}_{\rho}(0,T;L_{p})}.
\end{equation}

By combining Lemma \ref{LEM: EmbeddingAdornedIntoEmbracingNeutral}, Lemma \ref{LEM: EmbeddingTwistedIntoEmbracingNeutral} and Lemma \ref{LEM: SumAdornedTwistedUniq}, we immediately obtain the following.
\begin{theorem}
	\label{TH: AgalmanatedEmbeddingLp}
	For $p > 1$ and $T > 0 $ or $T = \infty$ the mapping
	\begin{equation}
		\mathcal{A}^{p}_{\rho}(0,T;L_{p}(-\tau,0;\mathbb{F})) \ni (\phi_{x,\rho},\psi_{y,\rho}) \mapsto \phi_{x,\rho}(\cdot) + \psi_{y,\rho}(\cdot) \in \mathcal{E}_{p}(0,T;L_{p}(-\tau,0;\mathbb{F}))
	\end{equation}
    is a continuous embedding and its norm admits an estimate depending only on $\rho$ and $\tau$. 
\end{theorem}

For $p > 1$ we will usually refer to $\mathcal{A}^{p}_{\rho}(0,T;L_{p})$ in the sense of its image in the embracing space $\mathcal{E}_{p}(0,T;L_{p}(-\tau,0;\mathbb{F}))$ as in Theorem \ref{TH: AgalmanatedEmbeddingLp} or, by Lemma \ref{LEM: EmbracingSpaceDescriptionCLp}, in $L_{p}(0,T;L_{p}(-\tau,0;\mathbb{F}))$. 

We call any $\phi \in \mathcal{A}^{p}_{\rho}(0,T;L_{p})$ a $\rho$-\textit{agalmanated}\footnote{This name comes from the Ancient Greek word $\alpha\gamma\alpha\lambda\mu\alpha$ (agalma) that means an offering to a deity that, by its worth or artistic value, gives him/her special significance. So, ``agalmanated'' semantically can be understood as ``glorified'' or ``adorned with glory''.} $L_{p}(-\tau,0;\mathbb{F})$-valued function on $[0,T]$ or simply say that $\phi(\cdot)$ is $\rho$-agalmanated.

Combining Theorem \ref{TH: AgalmanatedEmbeddingLp} with Theorem \ref{TH: EmbracingSpaceFucntionalIGammaNeutral} and Corollary \ref{EQ: ComputationICgammaEmbracingL1Loc}, we obtain the following.
\begin{theorem}
	\label{TH: AgalmanatedICgammaExistence}
	Suppose $\rho(\cdot)$ is positive and let $\gamma$ and $C^{\gamma}$ be as in \eqref{EQ: OperatorCgammaDefinitionNeutral}. Then for $T>0$ or $T=\infty$ and $p>1$ there exists a bounded linear operator
	\begin{equation}
    	\label{EQ: OperatorICgammaAgalmanatedAction}
		\mathcal{I}_{C^{\gamma}} \colon \mathcal{A}^{p}_{\rho}(0,T;L_{p}(-\tau,0;\mathbb{F})) \to L_{p}(0,T;\mathbb{M}_{\gamma})
	\end{equation}
    with the norm not exceeding $\operatorname{Var}_{[-\tau,0]}(\gamma)$ times a constant which depends only on $\rho$ and $\tau$ and such that for $\phi(\cdot) \in \mathcal{A}^{p}_{\rho}(0,T;L_{p}(-\tau,0;\mathbb{F})) \cap L_{1,loc}(0,T;C([-\tau,0];\mathbb{F}))$ we have
    \begin{equation}
    	\label{EQ: OperatorICgammaAgalmanatedComputationL1}
    	(\mathcal{I}_{C^{\gamma}}(\phi)(t) = C^{\gamma}\phi(t) \text{ for almost all } t \in (0,T),
    \end{equation}
    Moreover, for
    $\phi(\cdot) = \phi_{x,\rho}(\cdot)+\psi_{y,\rho}(\cdot)$, where $\phi_{x,\rho}(\cdot) \in \mathcal{Y}^{p}_{\rho}(0,T;L_{p})$ and $\psi_{y,\rho}(\cdot) \in \mathcal{T}^{p}_{\rho}(0,T;L_{p})$, we have
    \begin{equation}
    	\label{EQ: OperatorICgammaAgalmanatedSumDef}
    	\mathcal{I}_{C^{\gamma}}\phi = \mathcal{I}_{C^{\gamma}}\phi_{x,\rho} + \mathcal{I}_{C^{\gamma}}\psi_{y,\rho}.
    \end{equation}
    where the action of $\mathcal{I}_{C^{\gamma}}$ on $\psi_{x,\rho}$ and $\psi_{y,\rho}$ can be understood in the sense of Theorem \ref{TH: OperatorIcAdornedLp} and Theorem \ref{TH: OperatorIcTwistedLp} respectively.
\end{theorem}
\section{Applications to delay equations of neutral type in $\mathbb{R}^{n}$}
\label{SEC: NeutralDelayEquationsApplications}
\subsection{Properties of linear problems: structural Cauchy formula and resolvent estimates}
\label{SUBSEC: NDELinearSystems}
Let us consider the following class of linear delay equations of neutral type in $\mathbb{R}^{n}$ given by
\begin{equation}
	\label{EQ: LinearDelayEqs}
	\frac{d}{dt} \left[ x(t) + D_{0}x_{t} \right] = \widetilde{A}x_{t} + \widetilde{B}\xi(t),
\end{equation}
where $\tau>0$ is fixed and $x_{t}(\cdot)=x(t+\cdot)$ denotes the history function $[-\tau,0]$; $\widetilde{A}, D_{0} \colon C([-\tau,0]; \mathbb{R}^{n}) \to \mathbb{R}^{n}$ are bounded linear operators; $\widetilde{B}$ is an $(n \times m)$-matrix and $\xi(\cdot) \in L_{2}(0,T;\Xi)$, where $\Xi$ is the control space given by $\mathbb{R}^{m}$ endowed with some (not necessarily Euclidean) inner product. We assume that $D_{0}$ is non-atomic at zero (see \eqref{EQ: OperatorD0Representation} below).

By the Riesz representation theorem, $\widetilde{A}$ is given by an $(n \times n)$-matrix-valued function of bounded variation $a(\theta)$, where $\theta \in [-\tau,0]$, as
\begin{equation}
	\label{EQ: LinearOperatorRepresentation}
	\widetilde{A}\phi = \int_{-\tau}^{0} da(\theta)\phi(\theta) \text{ for } \phi \in C([-\tau,0];\mathbb{R}^{n}).
\end{equation}
Analogously, there exists an $(n \times n)$-matrix-valued function of bounded variation $\delta_{D_{0}}(\theta)$, where $\theta \in [-\tau,0]$, such that
\begin{equation}
	\label{EQ: OperatorD0Representation}
	D_{0} \phi = \int_{-\tau}^{0}d\delta_{D_{0}}(\theta) \phi(\theta) \text{ for } \phi \in C([-\tau,0];\mathbb{R}^{n}).
\end{equation}
We assume that the total variation $\operatorname{Var}_{[-\varepsilon,0]}(\delta_{D_{0}})$ of $\delta_{D_{0}}$ on $[-\varepsilon,0]$ tends to $0$ as $\varepsilon \to 0$.

Let us consider the Hilbert space $\mathbb{H} := \mathbb{R}^{n} \times L_{2}(-\tau,0;\mathbb{R}^{n})$ and the operator $A \colon \mathcal{D}(A) \subset \mathbb{H} \to \mathbb{H}$ given by
\begin{equation}
	\label{EQ: OperatorAdelayDefinition}
	(x,\phi) \mapsto \left(\widetilde{A}\phi, \frac{d}{d \theta}\phi \right),
\end{equation}
where
\begin{equation}
	\label{EQ: DomainNeutralDelayOperator}
	(x,\phi) \in \mathcal{D}(A) := \{ (y,\psi) \in \mathbb{H} \ | \ \psi \in W^{1,2}(-\tau,0;\mathbb{R}^{n}) \text{ and } \psi(0)+D_{0}\psi=y \}.
\end{equation}
Note that $W^{1,2}(-\tau,0;\mathbb{R}^{n})$ is naturally continuously embedded into $C([-\tau,0];\mathbb{R}^{n})$, so $D_{0}\psi$ and $\widetilde{A}\psi$ are well-defined for any $\psi \in W^{1,2}(-\tau,0;\mathbb{R}^{n})$.

It is clear that $A$ is a closed operator. From Lemma 2.2 in \cite{BurnsHerdmanStech1983} it follows that the domain $\mathcal{D}(A)$ is dense in $\mathbb{H}$. Moreover, by Theorem 2.3 in \cite{BurnsHerdmanStech1983}, we have that $A$ is the generator of a $C_{0}$-semigroup in $\mathbb{H}$, which, as usual, will be denoted by $G(t)$ for $t \geq 0$. 

Define the operator $B \colon \Xi \to \mathbb{H}$ as $B\xi:=(\widetilde{B}\xi,0)$. Now we can associate with \eqref{EQ: LinearDelayEqs} a control system given by the pair $(A,B)$ and the spaces $\mathbb{H}$ and $\Xi$. In fact, we are interested in applications of the Frequency Theorem to the pair $(A+\nu I,B)$ for some $\nu \in \mathbb{R}$. 

Since we are aimed to construct Lyapunov functionals without assumptions on the controllability, we have to study the extended control system associated with the pair $(A+\nu I,B)$. In our context, in terms of \eqref{EQ: AuxiliarySpacesEmbeddings} we have $\mathbb{E}_{0} = \mathbb{W} = \mathbb{H}$ and it will be sufficient to consider the linear inhomogeneous system
\begin{equation}
	\label{EQ: LinearInhomogeneousSystemDelay}
	\dot{v}(t) = (A+\nu I)v(t) + \eta(t)
\end{equation}
with $\eta(\cdot) \in L_{2}(0,T;\mathbb{H})$.

The following theorem establishes the important property of solutions to \eqref{EQ: LinearInhomogeneousSystemDelay} related to the spaces of agalmanated functions considered in Section \ref{SUBSEC: AgalmanatedFunctionsNeutral}. Note that \eqref{EQ: StructuralCauchyFormula} gives a decomposition of the solution part into a sum of adorned and twisted functions which we call a \textit{structural Cauchy formula}. Recall that, according to Lemma \ref{LEM: SumAdornedTwistedUniq}, such a decomposition is unique. For the proof, we adapt the method of J.A.~Burns, T.L.~Herdman and H.W.~Stech described in Lemma 2.6 from their paper \cite{BurnsHerdmanStech1983}, where the case of $\eta_{2}(\cdot) \equiv 0$ is considered and, consequently, the second term in \eqref{EQ: StructuralCauchyFormula} do not appear. In further sections, the structural Cauchy formula will be used to interpret integral quadratic functionals and verify the associated assumption on the Fourier transform \nameref{DESC:FT} for the extended control system (see Lemma \ref{LEM: QuadraticFormDelayQf}).
\begin{theorem}
	\label{TH: DelayRegCond}
	Suppose $T>0$ and $\eta_{\nu}(\cdot) \in L_{2}(0,T;\mathbb{H})$. Let $v_{\nu}(\cdot)$ be a mild solution on $[0,T]$ to \eqref{EQ: LinearInhomogeneousSystemDelay} with $\eta = \eta_{\nu}$ such that $v(0)=v_{0} \in \mathbb{H}$ for some $v_{0} \in \mathbb{H}$. Moreover, let $v_{\nu}(\cdot) = (y_{\nu}(\cdot),\phi_{\nu}(\cdot))$ for $y_{\nu}(\cdot) \in C([0,T];\mathbb{R}^{n})$ and $\phi_{\nu}(\cdot) \in C([0,T];L_{2}(-\tau,0;\mathbb{R}^{n}))$ and let $\eta_{\nu}(\cdot) = (\eta^{1}_{\nu}(\cdot),\eta^{2}_{\nu}(\cdot))$ for $\eta^{1}_{\nu}(\cdot) \in L_{2}(0,T;\mathbb{R}^{n})$ and $\eta^{2}_{\nu}(\cdot) \in L_{2}(0,T;L_{2}(-\tau,0;\mathbb{R}^{n}))$. Then there exists $x(\cdot) \in L_{2}(-\tau,T;\mathbb{R}^{n})$ such that
	\begin{equation}
		\label{EQ: StructuralCauchyFormula}
		\phi_{\nu}(t) = \rho_{\nu}(t)x_{t} + \rho_{\nu}(t)\int_{0}^{t}T_{1}(t-s)e^{-\nu s}\eta^{2}_{\nu}(s)ds \text{ for all } t \in [0,T],
	\end{equation}
    where $\rho_{\nu}(t) = e^{\nu t}$ and $T_{1}(t)$, $t \geq 0$, is the semigroup of left translates in $L_{2}(-\tau,0;\mathbb{R}^{n})$ given by \eqref{EQ: DefinitionLeftTranslates}. In particular, $\phi_{\nu}(\cdot)$ belongs to the space $\mathcal{A}^{2}_{\rho}(0,T;L_{2}(-\tau,0;\mathbb{R}^{n}))$ from \eqref{EQ: SpaceAgalmanatedDefinition}. If $v_{0}=(y_{0},\phi_{0}) \in \mathcal{D}(A)$ and $\eta^{2}_{\nu}(\cdot) \in L_{2}(0,T;W^{1,2}_{0+}(-\tau,0;\mathbb{R}^{n}))$, where $W^{1,2}_{0+}(-\tau,0;\mathbb{R}^{n})$ is given by \eqref{EQ: OperatorAT1Domain}, we have
    \begin{equation}
    	\label{EQ: NeutralStructuralCauchyRegularityOfSolutions}
    	x(\cdot) \in W^{1,2}(-\tau,T;\mathbb{R}^{n}) \text{ and } \int_{0}^{\cdot}T_{1}(\cdot-s)e^{-\nu s} \eta^{2}_{\nu}(s)ds \in L_{2}(0,T;W^{1,2}_{0+}(-\tau,0;\mathbb{R}^{n})).
    \end{equation}
    
    In addition, we have
    \begin{equation}
    	y_{\nu}(t) = (\mathcal{I}_{D}\phi_{\nu})(t) \text{ for almost all } t \in \mathbb{R},
    \end{equation}
    where $D\phi = \phi(0) + D_{0}\phi$ for all $\phi \in C([-\tau,0;\mathbb{R}^{n}])$ and the operator $\mathcal{I}_{D}$ is given by Theorem \ref{TH: AgalmanatedICgammaExistence} for $p=2$, $\rho = \rho_{\nu}$ and $\mathbb{F}=\mathbb{M}=\mathbb{R}^{n}$.
	
	Moreover, for $\phi_{x,\rho_{\nu}}(t) = \rho_{\nu}(t)x_{t}$ and $\psi_{\eta^{2}_{\nu},\rho_{\nu}}(t) = \rho_{\nu}(t) \int_{0}^{t}T_{1}(t-s)e^{-\nu s}\eta^{2}_{\nu}(s)ds$ we have the estimates
	\begin{equation}
		\label{EQ: NeutralDelayStructuralFormulaEstimates}
		\begin{split}
			&\| \psi_{\eta^{2}_{\nu},\rho_{\nu}}(\cdot) \|^{2}_{\mathcal{T}^{2}_{\rho_{\nu}}(0,T;L_{2})} \leq \|\eta_{\nu}(\cdot)\|^{2}_{L_{2}(0,T;\mathbb{H})},\\
			&\| \phi_{x,\rho_{\nu}}(\cdot) \|^{2}_{\mathcal{Y}^{2}_{\rho_{\nu}}(0,T;L_{2})} \leq C(\nu,\tau) \cdot \left( \|v_{\nu}(\cdot)\|^{2}_{L_{2}(0,T;\mathbb{H})} + |v_{\nu}(0)|^{2}_{\mathbb{H}} + |v_{\nu}(T)|^{2}_{\mathbb{H}} + \|\eta_{\nu}(\cdot)\|^{2}_{L_{2}(0,T;\mathbb{H})} \right),
		\end{split}
	\end{equation} 
    where $L_{2}$ in the range of functional spaces stands for $L_{2}(-\tau,0;\mathbb{R}^{n})$; the constant $C(\nu,\tau)>0$ depends on $\operatorname{max}\{ 1, (\int_{-\tau}^{0}e^{-2\nu \theta}d\theta)^{-1} \}$, $\max\{1,e^{-\nu \tau} \}$ and $\tau$ in a monotonically increasing way and does not depend on $T$; and the norms on the left-hand side are defined by \eqref{EQ: NorminHTP} and \eqref{EQ: TwistedLpNorm}.
\end{theorem}
\begin{proof}
	Let us firstly show that the estimates from \eqref{EQ: NeutralDelayStructuralFormulaEstimates} are valid under the decomposition \eqref{EQ: StructuralCauchyFormula}. The first inequality is obvious, since by definition (see \eqref{EQ: TwistedLpNorm}) we have
	\begin{equation}
		\| \psi_{\eta^{2}_{\nu},\rho_{\nu}}(\cdot) \|^{2}_{\mathcal{T}^{2}_{\rho_{\nu}}(0,T;L_{2})} = \int_{0}^{T}\|\rho_{\nu}(t) e^{-\nu t} \eta^{2}_{\nu}(t)\|^{2}_{L_{2}}dt = \int_{0}^{T}\|\eta^{2}_{\nu}(t)\|^{2}_{L_{2}}dt.
	\end{equation}
    For the second inequality we have several steps. Firstly, from the Fubini theorem we get for $T \geq \tau$
    \begin{equation}
    	\begin{split}
    		\| \phi_{x,\rho_{\nu}}(\cdot) \|^{2}_{L_{2}(0,T;L_{2})} = \int_{0}^{T}dt \int_{-\tau}^{0}|e^{\nu t}x(t+\theta)|^{2} d\theta = \int_{-\tau}^{0}d\theta \int_{0}^{T}e^{2\nu t}|x(t+\theta)|^{2} dt =\\= \int_{-\tau}^{0} e^{-2 \nu \theta}d\theta \cdot \int_{\theta}^{T+\theta}|e^{\nu t} x(t)|^{2}dt \geq
    	\int_{-\tau}^{0}e^{-2\nu \theta}d\theta \cdot \int_{0}^{T-\tau}|e^{\nu t}x(t)|^{2}dt
     	\end{split}
    \end{equation}
    Using Lemma \ref{LEM: EmbeddingTwistedIntoEmbracingNeutral} and item 2) of Lemma \ref{LEM: EmbracingSpaceDescriptionCLp} with $p=2$ and $\rho = \rho_{\nu}$, we get that the embedding constant of $\mathcal{T}^{2}_{\rho_{\nu}}(0,T;L_{2})$ into $L_{2}(0,T;L_{2})$ can be estimated as $\tau\max\{ 1, e^{-\nu \tau} \}$. From this and \eqref{EQ: StructuralCauchyFormula} we obtain
    \begin{equation}
    	\begin{split}
    		\| \phi_{x,\rho_{\nu}}(\cdot) \|_{L_{2}(0,T;L_{2})} \leq \| \phi_{\nu}(\cdot) \|_{L_{2}(0,T;L_{2})} + \| \psi_{\eta^{2}_{\nu},\rho_{\nu}}(\cdot)  \|_{L_{2}(0,T;L_{2})} \leq \\ \leq \| v_{\nu}(\cdot) \|_{L_{2}(0,T;\mathbb{H})} + \tau\max\{ 1, e^{-\nu \tau} \} \cdot \| \psi_{\eta^{2}_{\nu},\rho_{\nu}}(\cdot) \|_{\mathcal{T}^{2}_{\rho_{\nu}}(0,T;L_{2})}.
    	\end{split}
    \end{equation}
    Now from \eqref{EQ: StructuralCauchyFormula} with $t=0$ we have
    \begin{equation}
    	\int_{-\tau}^{0}|x(\theta)|^{2}d\theta = \| \phi_{x,\rho_{\nu}}(0) \|^{2}_{L_{2}} = \| \phi_{\nu}(0) \|^{2}_{L_{2}} \leq |v_{\nu}(0)|^{2}_{\mathbb{H}}.
    \end{equation}
    Again using \eqref{EQ: StructuralCauchyFormula} with $t=T$, we obtain
    \begin{equation}
    	\begin{split}
    		\left(\int_{\max\{0, T-\tau\}}^{T}|\rho_{\nu}(t)x(t)|^{2}dt\right)^{1/2} \leq \max\{ 1, e^{-\nu \tau} \} \cdot \| \phi_{x,\rho_{\nu}}(T)\|_{L_{2}} \leq \\ \leq \max\{ 1, e^{-\nu \tau} \} \cdot |v_{\nu}(T)|_{\mathbb{H}} + \max\{ 1, e^{-\nu \tau} \} \cdot \| \psi_{\eta^{2}_{\nu},\rho_{\nu}}(T) \|_{L_{2}}
    	\end{split}
    \end{equation}
    and for the last term we have the estimate
    \begin{equation}
    	\begin{split}
    		 \| \psi_{\eta^{2}_{\nu},\rho_{\nu}}(T) \|^{2}_{L_{2}} = \left\| \rho_{\nu}(T) \int_{0}^{T}T_{1}(t-s)e^{-\nu s}\eta^{2}_{\nu}(s)ds \right\|^{2}_{L_{2}} \leq \\ \leq \tau \cdot \max\{ 1, e^{-\nu \tau} \} \cdot \int_{\max\{0, T-\tau\}}^{T}\|\eta^{2}_{\nu}(t)\|^{2}_{L_{2}}dt.
    	\end{split}
    \end{equation}
    By combining the above estimates, we obtain the second inequality from \eqref{EQ: NeutralDelayStructuralFormulaEstimates}.
	
	Now we are going to show the decomposition \eqref{EQ: StructuralCauchyFormula}. For this, we put $v(t)=(y(t),\phi(t)) := e^{-\nu t} v_{\nu}(t)$ and $\eta(t) = (\eta^{1}(t),\eta^{2}(t)) := e^{-\nu t} \eta_{\nu}(t)$. Then $v(\cdot)$ and $\eta(\cdot)$ solves \eqref{EQ: LinearInhomogeneousSystemDelay} with $\nu = 0$. Let us consider the part of the equation for $y(\cdot)$, i.e.
	\begin{equation}
		\label{EQ: StructuralCauchyFormulaBoundaryEquation}
		\dot{y}(t) = \widetilde{A}\phi(t) + \eta^{1}(t) \text{ for } t \in [0,T].
	\end{equation}
    Let $v(0) = (y_{0},\phi_{0})$ for $y_{0} \in \mathbb{R}$ and $\phi_{0} \in L_{2}(-\tau,0;\mathbb{R}^{n})$. Formally substituting the desired identities $y(t) = D\phi(t)$ and $\phi(t) = x_{t} + \int_{0}^{t}T_{1}(t-s)\eta^{2}(s)ds$ into \eqref{EQ: StructuralCauchyFormulaBoundaryEquation} and integrating it over $[0,t]$, we obtain the identity for $x(\cdot) \in L_{2}(-\tau,T;\mathbb{R}^{n})$ as
    \begin{equation}
    	\label{EQ: NeutralStructuralCauchyOperatorS}
    	x(t) = (Sx)(t) = \begin{cases}
    		y_{0} - D_{0}x_{t}  + \int_{0}^{t}\widetilde{A}x_{s}ds + P(\eta)(t), &\text{ if } t > 0,\\
    		\phi_{0}(t), &\text{ if } t \in [-\tau,0],
    	\end{cases}
    \end{equation}
    where
    \begin{equation}
    	P(\eta)(t) = -D \int_{0}^{t}T_{1}(t-s)\eta^{2}(s)ds + \int_{0}^{t}\widetilde{A}\int_{0}^{s}T_{1}(s-h)\eta^{2}(h)dhds + \int_{0}^{t} \eta^{1}(s)ds.
    \end{equation}

    We want to consider $S$ as an operator in $W^{1,2}(-\tau,\varepsilon;\mathbb{R}^{n})$ for some $\varepsilon>0$. For this, we assume that $\phi_{0} \in W^{1,2}(-\tau,0;\mathbb{R}^{n})$ and $D\phi_{0} = y_{0}$. Moreover, we assume that $\eta_{2}(\cdot) \in L_{2}(0,T;W^{1,2}_{0+}(-\tau,0;\mathbb{R}^{n}))$. Then Theorem \ref{TH: ICgammaSobolevTwisted} guarantees that $P(\eta)(\cdot) \in W^{1,2}(0,T;\mathbb{R}^{n})$ if the actions of $D$ and $\widetilde{A}$ on $\int_{0}^{t}T_{1}(t-s)\eta^{2}(s)ds$ are interpreted according to the operators $\mathcal{I}_{D}$ and $\mathcal{I}_{\widetilde{A}}$ given by the theorem. Analogously, Theorem \ref{TH: ICgammaSobolevAdorned} implies that the action of $D_{0}$ on $x_{t}$ can be interpreted in such a way that $t \mapsto D_{0}x_{t}$ will belong to $W^{1,2}(0,T;\mathbb{R}^{n})$. For the remained term $\int_{0}^{t}\widetilde{A}x_{s}ds$ there are no problems since $s \mapsto \widetilde{A}x_{s}$ is continuous and, consequently, the integral is a $C^{1}$-differentiable function of $t$.
    
    Thus, under the above imposed conditions, the operator $S$ becomes a self-map of $W^{1,2}(-\tau,\varepsilon;\mathbb{R}^{n})$. Note that for any $\varepsilon \in [0,T]$ the embedding constant of $W^{1,2}(-\tau,\varepsilon;\mathbb{R}^{n})$ into $C([-\tau,\varepsilon];\mathbb{R}^{n})$ can be estimated by $K>0$ that is independent of $\varepsilon$ (see, for example, Lemma 5.8, p. 100 \cite{Adams1976SobolevSpaces}). Then for any $x^{1}(\cdot),x^{2}(\cdot) \in W^{1,2}(-\tau,\varepsilon;\mathbb{R}^{n})$ and $t > 0$ we have
    \begin{equation}
    	\begin{split}
    		 \frac{d}{dt}\int_{0}^{t}\widetilde{A}(x^{1}_{s} - x^{2}_{s})ds = \widetilde{A}(x^{1}_{t} - x^{2}_{t}), \\
    		 \int_{0}^{\varepsilon}| \widetilde{A}(x^{1}_{t}-x^{2}_{t}) |^{2}dt \leq \varepsilon \cdot \|\widetilde{A}\|^{2} \cdot K^{2} \cdot \| x^{1}(\cdot) - x^{2}(\cdot) \|^{2}_{W^{1,2}(-\tau,\varepsilon;\mathbb{R}^{n})}.
    	\end{split}
    \end{equation}
    Let $\rho(\cdot) \equiv 1$. Then Theorem \ref{TH: ICgammaSobolevAdorned} gives a constant $K_{1}>0$ independent of $\varepsilon$ such that
    \begin{equation}
    	\label{EQ: StructureCauchyFormulaID0SobolevEstimate}
    	\| \mathcal{I}_{D_{0}}( \phi_{x^{1},\rho} - \phi_{x^{2},\rho}) \|_{W^{1,2}(0,\varepsilon;\mathbb{R}^{n})} \leq K_{1} \cdot \operatorname{Var}_{[-\tau,0]}(\delta_{D_{0}}) \cdot \| x^{1}(\cdot) - x^{2}(\cdot) \|_{W^{1,2}(-\tau,\varepsilon;\mathbb{R}^{n})}.
    \end{equation}
    Combining the above estimates, for some independent of $\varepsilon$ constant $K_{2}>0$ we obtain that
    \begin{equation}
    	\|(Sx^{1})(\cdot) - (Sx^{2})(\cdot)\|_{W^{1,2}(-\tau,\varepsilon;\mathbb{R}^{n})} \leq (K_{1} \operatorname{Var}_{[-\tau,0]}(\delta_{D_{0}}) + \varepsilon K_{2}) \cdot \| x^{1}(\cdot) - x^{2}(\cdot) \|_{W^{1,2}(-\tau,\varepsilon;\mathbb{R}^{n})}.
    \end{equation}
    This shows that $S$ is a Lipschitz mapping. Let us show that the second iterate of $S$, say $U = S^{2}$, is a contraction for all sufficiently small $\varepsilon>0$. Indeed, since $(Sx_{1})(t)-(Sx_{2})(t) = 0$ for $t \in [-\tau,0]$ and any $x^{1}(\cdot)$, $x^{2}(\cdot) \in W^{1,2}(-\tau,\varepsilon;\mathbb{R}^{n})$, it is clear that for the computation of the left-hand side from \eqref{EQ: StructureCauchyFormulaID0SobolevEstimate} where $x^{1}(\cdot)$ and $x^{2}(\cdot)$ are exchanged with $(Sx^{1})(\cdot)$ and $(Sx^{2})(\cdot)$, only the values of $\delta_{D_{0}}(\theta)$ for $\theta \in [-\varepsilon,0]$ matter. Thus, repeating the estimate \eqref{EQ: StructureCauchyFormulaID0SobolevEstimate} in such a context, we obtain
    \begin{equation}
    	\label{EQ: StructuralCauchyFormulaSecondIterateEstimate}
    	\begin{split}
       	\|(Ux^{1})(\cdot)-(Ux^{2})(\cdot)\|_{W^{1,2}(-\tau,\varepsilon;\mathbb{R}^{n})} \leq \\ \leq (K_{1} \operatorname{Var}_{[-\varepsilon,0]}(\delta_{D_{0}}) + \varepsilon K_{2}) (K_{1} \operatorname{Var}_{[-\tau,0]}(\delta_{D_{0}}) + \varepsilon K_{2}) \cdot \| x^{1}(\cdot) - x^{2}(\cdot) \|_{W^{1,2}(-\tau,\varepsilon;\mathbb{R}^{n})}.
    	\end{split}
    \end{equation}
    By definition, $\operatorname{Var}_{[-\varepsilon,0]}(\delta_{D_{0}})$ tends to $0$ as $\varepsilon \to 0+$ and, consequently, we get that $U$ is a contraction for all sufficiently small $\varepsilon>0$. Since $U=S^{2}$ has a unique fixed point, it is also a unique fixed point for $S$. Repeating the procedure, we may obtain $x(\cdot) \in W^{1,2}(-\tau,T;\mathbb{R}^{n})$ such that $x(t) = (Sx)(t)$ for $t \in [-\tau,T]$.
    
    Now define $\phi(t) := x_{t} + \int_{0}^{t}T_{1}(t-s)\eta^{2}(s)ds$ for $t \in [0,T]$. From this, we immediately get that $y(t) := D\phi(t)$ satisfies \eqref{EQ: StructuralCauchyFormulaBoundaryEquation} on $[0,T]$ as a $C^{1}$-differentiable function if $\eta^{1}(\cdot) \in C([0,T];\mathbb{R}^{n})$. Since $(y(t),\phi(t)) \in \mathcal{D}(A)$ (see \eqref{EQ: DomainNeutralDelayOperator}) for all $t \in [0,T]$, it remains to show that $\phi(t)$ satisfies
    \begin{equation}
    	\label{EQ: StructuralCauchyFormulaPhiEquation}
    	\frac{d}{dt}\phi(t) = \frac{d}{d\theta}\phi(t) + \eta_{2}(t) \text{ for } t \in [0,T],
    \end{equation}
    where $\frac{d}{d\theta}$ is the derivative in $W^{1,2}(-\tau,0;\mathbb{R}^{n})$. Since $x(\cdot) \in W^{1,2}(-\tau,T;\mathbb{R}^{n})$, by Lemma 3.4 from \cite{BatkaiPiazzera2005}, we have that $t \mapsto x_{t}$ is a $C^{1}$-differentiable $L_{2}(-\tau,0;\mathbb{R}^{n})$-valued function of $t \in [0,T]$ and it satisfies
    \begin{equation}
       	\label{EQ: StructuralCauchyFormulaXtEquation}
    	\frac{d}{dt} x_{t} = \frac{d}{d\theta} x_{t} \text{ for } t \in [0,T].
    \end{equation}
    Moreover, assuming that $\eta_{2}(\cdot) \in C^{1}([0,T];L_{2}(-\tau,0;\mathbb{R}^{n}))$, we have (see Theorem 6.5, Chapter I in \cite{Krein1971}) that $t \mapsto \int_{0}^{t}T_{1}(t-s)\eta_{2}(s)ds$ is a classical solution on $[0,T]$ to the inhomogeneous Cauchy problem associated with $A_{T_{1}}$ (see \eqref{EQ: DefinitionLeftTranslates}). In particular, it is a $C^{1}$-differentiable function of $t \in [0,T]$ that belongs to $\mathcal{D}(A_{T_{1}}) = W^{1,2}_{0+}(-\tau,0;\mathbb{R}^{n})$ and satisfies
    \begin{equation}
    	\label{EQ: StructuralCauchyFormulaTwistedEquation}
    	\frac{d}{dt}\int_{0}^{t}T_{1}(t-s)\eta^{2}(s)ds = \frac{d}{d\theta} \int_{0}^{t}T_{1}(t-s)\eta^{2}(s) + \eta_{2}(t) \text{ for all } t \in [0,T].
    \end{equation}
    By combining \eqref{EQ: StructuralCauchyFormulaXtEquation} and \eqref{EQ: StructuralCauchyFormulaTwistedEquation}, we obtain \eqref{EQ: StructuralCauchyFormulaPhiEquation}.
    
    Note that the imposed conditions on $(y_{0},\phi_{0})$ and $\eta(\cdot)$ are represented by dense subspaces in $\mathbb{H}$ and $L_{2}(0,T;\mathbb{H})$ respectively. Since we know\footnote{In fact, the consideration of the operator $S$ from \eqref{EQ: NeutralStructuralCauchyOperatorS} with $\eta(\cdot) = 0$ in $L_{2}(-\tau,T;\mathbb{R}^{n})$ and applying similar arguments to investigate its fixed point along with the fact that $S^{2}$ is a uniform (in initial conditions) contraction show that $A$ generates a $C_{0}$-semigroup.} that $A$ is the generator of a $C_{0}$-semigroup, we may obtain the decomposition \eqref{EQ: StructuralCauchyFormula} by continuity from the already obtained estimates \eqref{EQ: NeutralDelayStructuralFormulaEstimates}. The proof is finished.
\end{proof}

In the following, there is a slight abuse of notation since we have been considering real vector spaces from the beginning of this section and, consequently, the resolvent is defined on the complexification $\mathbb{H}^{\mathbb{C}} = \mathbb{C}^{n} \times L_{2}(-\tau,0;\mathbb{C}^{n})$ of $\mathbb{H}$. In order not to get confused, the reader should think that all the spaces are complex.

Let us consider the matrix-valued functions of $p \in \mathbb{C}$ given by
\begin{equation}
	\label{EQ: AandBoperatorsDelayRepresentation}
	\alpha(p) = \int_{-\tau}^{0}e^{p\theta} da(\theta) \text{ and } \delta(p) = \int_{-\tau}^{0} e^{p \theta} d\delta_{D_{0}}(\theta). 
\end{equation}
These matrices are convenient for computing the resolvent and the spectrum of $A$. Namely, let $I_{n}$ denote the identity $(n\times n)$-matrix and $I$ denote the identity operator in $\mathbb{H}$. Put $p \in \mathbb{C}$ and suppose $(y,\psi) \in \mathbb{H}$ is given. We have to find $(x,\phi) \in \mathcal{D}(A)$ such that $(A-pI)(x,\phi) = (y,\psi)$. According to \eqref{EQ: OperatorAdelayDefinition} and \eqref{EQ: DomainNeutralDelayOperator}, this means that
\begin{equation}
	\begin{split}
		&\frac{d}{d\theta}\phi(\theta) - p\phi(\theta) = \psi(\theta) \text{ in } L_{2}(-\tau,0;\mathbb{C}^{n}),\\
		&\phi(0) + D_{0}\phi = x,\\
		&\widetilde{A}\phi - p x = y.
	\end{split}
\end{equation}
Substituting the second identity to the third, expanding the latter according to \eqref{EQ: LinearOperatorRepresentation} and \eqref{EQ: OperatorD0Representation} and substituting $\phi(\theta)$ obtained from the first identity via the Cauchy formula, gives 
\begin{equation}
	\label{EQ: DelayResolventPropetyLemma}
	\begin{split}
		\phi(\theta) &= e^{p \theta} \phi(0) + \int_{0}^{\theta} e^{p(\theta-s)} \psi(s) ds \text{ for } \theta \in [-\tau,0],\\
		\left( \alpha(p) - p I_{n} - p \delta(p) \right) \phi(0) &= y - \int_{-\tau}^{0}da(\theta) \int_{0}^{\theta} e^{p(\theta-s)}\psi(s)ds + p\int_{-\tau}^{0}d\delta_{D_{0}}(\theta) \int_{0}^{\theta} e^{p(\theta-s)}\psi(s)ds.
	\end{split}
\end{equation}
From \eqref{EQ: DelayResolventPropetyLemma} it is clear that the resolvent $(A-pI)^{-1}$ exists if and only if the matrix $\alpha(p) - p I_{n} - p \delta(p)$ is invertible. Thus, the spectrum of $A$ consists only of $p \in \mathbb{C}$ satisfying
\begin{equation}
	\label{EQ: SpectrumADelay}
	\operatorname{det} \left[\alpha(p) - p I_{n} - p\delta(p)\right] = 0,
\end{equation}
Clearly, $A$ has compact resolvent and thus its spectrum consists of eigenvalues with finite algebraic multiplicity. More detailed spectral properties will be established in Theorem \ref{TH: SpectralDecompositionsDelayNeutral} below.

Let us put $\mathbb{E}:=C([-\tau,0];\mathbb{R}^{n})$ and consider the continuous and dense embedding $\mathbb{E} \subset \mathbb{H}$ given by the map $\phi \mapsto (\phi(0)+D_{0}\phi,\phi)$. As before, we identify $\mathbb{E}$ with its image in $\mathbb{H}$ under the embedding. Clearly, the inclusion embedding $\mathcal{D}(A) \subset \mathbb{E}$ is continuous. 

We have the following lemma.
\begin{lemma}
	\label{LEM: DelayResCond}
	Consider the operator $A$ given by \eqref{EQ: OperatorAdelayDefinition} and let $p \in \mathbb{C}$ be its regular (i.e. non-spectral) point. Then
	\begin{equation}
		\label{EQ: DelayNeutralResolventEstimateEfromH}
		\| (A-pI)^{-1} \|_{\mathcal{L}(\mathbb{H};\mathbb{E}) } \leq C_{1}(p) \cdot \| (A-pI)^{-1} \|_{\mathcal{L}(\mathbb{H})} + C_{2}(p),
	\end{equation}
    where the constants $C_{1}(p)$ and $C_{2}(p)$ depend on $\operatorname{max}\{ e^{-\tau\operatorname{Re}p }, 1 \}$ and $\tau$ in a monotonically increasing way.
\end{lemma}
\begin{proof}
	Let $(A-pI) (x,\phi) = (y,\psi)$ for given $(y,\psi) \in \mathbb{H}$ and $(x,\phi) \in \mathcal{D}(A)$. Using the first identity from \eqref{EQ: DelayResolventPropetyLemma} and that $(x,\phi) = (A-pI)^{-1}(y,\psi)$, we obtain
	\begin{equation}
		\label{EQ: DelayNeutralResolventEstimateZeroValue}
		\begin{split}
			|\phi(0)| \leq \max\{ e^{\tau \operatorname{Re}p}, 1 \} \cdot |(x,\phi)|_{\mathbb{H}} + \sqrt{\tau} \max\{ e^{\tau \operatorname{Re}p}, 1 \} \cdot |(y,\psi)|_{\mathbb{H}} \leq\\
			\leq \left( \max\{ e^{\tau \operatorname{Re}p}, 1 \} \cdot \| (A-pI)^{-1} \|_{\mathcal{L}(\mathbb{H})} + \sqrt{\tau} \max\{ e^{\tau \operatorname{Re}p}, 1 \} \right) \cdot |(y,\psi)|_{\mathbb{H}}.
		\end{split}
	\end{equation}
    Now again using the first identity in \eqref{EQ: DelayResolventPropetyLemma} and \eqref{EQ: DelayNeutralResolventEstimateZeroValue} to estimate $\sup_{\theta \in [-\tau,0]} |\phi(\theta)| = \| (x,\phi) \|_{\mathbb{E}}$, we obtain \eqref{EQ: DelayNeutralResolventEstimateEfromH}. The proof is finished.
\end{proof}

Note that the semigroup $G(t)$ generated by $A$ is not eventually compact in the general case (unlike when there is no neutral part, i.e. $D_{0} = 0$). This places limitations concerned with spectral decompositions and resolvent bounds as in \eqref{EQ: DelayNeutralResolventEstimateEfromH} uniformly on a line $\operatorname{Re} p = \nu$. Following J.K.~Hale \cite{Hale1977}, we consider the following value
\begin{equation}
	\label{EQ: SqueezingDExponent}
	a_{D} := \sup \{ \operatorname{Re}p \ | \ \operatorname{det}\left[I_{n} + \delta(p) \right] = 0 \}.
\end{equation}
In the case the set over which the supremum is taken \eqref{EQ: SqueezingDExponent} is empty, we put $a_{D} := -\infty$. 

To discuss the role of $a_{D}$ in the studying spectrum of $A$, as in \cite{Hale1977}, we assume that $D_{0}$ has no singular part, i.e. there exist $\tau_{k} \in (0,\tau]$ and $m_{k} \in \mathbb{R}$, where $k=1,2,\ldots$, and an $(n\times n)$-matrix-valued function $M(\cdot)$ on $[-\tau,0]$ such that
\begin{equation}
	\label{EQ: D0HasnoSingularPart}
	\begin{split}
		&D_{0} \phi = \sum_{k=1}^{\infty} m_{k}\phi(-\tau_{k}) + \int_{-\tau}^{0}M(\theta)\phi(\theta)d\theta \text{ for any } \phi \in C([-\tau,0];\mathbb{R}^{n}),\\
		&\sum_{k=1}^{\infty}|m_{k}| + \int_{-\tau}^{0}|M(\theta)|d\theta < \infty.
	\end{split}
\end{equation}

The following theorem is an analog of the main result from Section 12.3 in \cite{Hale1977}, where $A$ is considered as an operator in $\mathbb{E} = C([-\tau,0];\mathbb{R}^{n})$. Since the proof in our context is similar, we only indicate some main nuances concerned with the consideration of $A$ in $\mathbb{H}$ and explain why the equation 
\begin{equation}
	\label{EQ: DelayFrictionSemihroupSpectrum}
	\operatorname{det}\left[I_{n} + \delta(p) \right] = 0
\end{equation} 
arises in \eqref{EQ: SqueezingDExponent}. In fact, $a_{D}$ determines the growth bound of a semigroup such that the difference of its time-$t$ mapping with $G(t)$ is a compact operator (see \eqref{EQ: DecompositionGFrictionCompactLinearNeutralDelay}) and, consequently, the essential spectral radii of these semigroups coincide.
\begin{theorem}
	\label{TH: SpectralDecompositionsDelayNeutral}
	Suppose for some $\nu \in \mathbb{R}$ we have $-\nu > a_{D}$, where $a_{D}$ is given by \eqref{EQ: SqueezingDExponent} and $D_{0}$ has no singular part (see \eqref{EQ: D0HasnoSingularPart}). Then we have
	\begin{enumerate}
		\item[1)] There is a finite number of eigenvalues of $A$, i.e. solutions $p=\lambda \in \mathbb{C}$ to \eqref{EQ: SpectrumADelay}, satisfying $\operatorname{Re}\lambda \geq -\nu$;
		\item[2)] Let $\mathbb{H}^{u}(\nu)$ be the finite-dimensional spectral subspace corresponding to the roots with $\operatorname{Re}\lambda \geq -\nu$ and let $\mathbb{H}^{s}(\nu)$ be its complementary spectral subspace. Then for any $\varepsilon>0$ there exists $M_{\varepsilon}$ such that
		\begin{equation}
			\label{EQ: NeutralDelaySpectralDecompositionsEstimateComplementary}
			| G(t)v_{0} |_{\mathbb{H}} \leq M_{\varepsilon} e^{(a_{D}(\nu) + \varepsilon) t} |v_{0}|_{\mathbb{H}} \text{ for all } t \geq 0 \text{ and } v_{0} \in \mathbb{H}^{s}(\nu),
		\end{equation}
	    where $a_{D}(\nu) := \sup\{ \operatorname{Re}\lambda \ | \ \lambda \in \operatorname{Spec}A, \ \operatorname{Re}\lambda < -\nu \}$ and $\operatorname{Spec}A$ denotes the spectrum of $A$.
	    \item[3)] If there is no spectrum of $A$ on the line $-\nu + i \mathbb{R}$, then
	    \begin{equation}
	    	\label{EQ: DelayNeutralResolventBoundedness}
	    	\sup_{\omega \in \mathbb{R}}\| (A - (-\nu + i \omega) I)^{-1} \|_{\mathcal{L}(\mathbb{H})} < \infty.
	    \end{equation}
        In particular, due to Lemma \ref{LEM: DelayResCond} we have \nameref{DESC:RES2} and \nameref{DESC:RES3} satisfied for the pair $(A  + \nu I, B)$.
	\end{enumerate} 
\end{theorem}
\begin{proof}
Let $G_{0}(t)$, where $t \geq 0$, denote the $C_{0}$-semigroup in $\mathbb{H}$ generated by the operator $A_{0}$ given by \eqref{EQ: OperatorAdelayDefinition} and \eqref{EQ: DomainNeutralDelayOperator} with $\widetilde{A} = 0$ and the given $D_{0}$. Then for each $v_{0} \in \mathbb{H}$ we consider $G(t)v_{0}$ as a solution to the inhomogeneous Cauchy problem corresponding to $A_{0}$. Let $\Pi_{1}$ and $\Pi_{2}$ be the orthogonal projectors in $\mathbb{H} = \mathbb{C}^{n} \times L_{2}(-\tau,0;\mathbb{C}^{n})$ onto $\mathbb{C}^{n} \times \{ 0 \}$ and $\{0 \} \times L_{2}(-\tau,0;\mathbb{C}^{n})$ respectively. Then, by the Cauchy formula, we have
\begin{equation}
	G(t)v_{0} = G_{0}(t)v_{0} + \int_{0}^{t}G_{0}(t-s) \begin{bmatrix}
		\widetilde{A} \Pi_{2}G(s)v_{0}\\
		0
	\end{bmatrix}ds \text{ for all } t \geq 0,
\end{equation}
where $\widetilde{A} \Pi_{2}G(s)v_{0}$ is understood as $\mathcal{I}_{\widetilde{A}}(\Pi_{2}G(\cdot)v_{0})(s)$ according to Theorem \ref{TH: OperatorIcAdornedLp} that is justified by Theorem \ref{TH: DelayRegCond} (note that we have $\eta^{2}_{\nu} \equiv 0$ in the context of the latter).

Note that $\mathbb{C}^{n} \times \{ 0 \}$ and $\{0 \} \times L_{2}(-\tau,0;\mathbb{C}^{n})$ are invariant subspaces of $\mathbb{H}$ w.r.t. $G_{0}(t)$. From this we obtain that the operator
\begin{equation}
	v_{0} \in \mathbb{H} \mapsto G(t)v_{0}-G_{0}(t)v_{0} = \int_{0}^{t}G_{0}(t-s) \begin{bmatrix}
		\widetilde{A} \Pi_{2}G(s)v_{0}\\
		0
	\end{bmatrix}ds \in \mathbb{C}^{n} \times \{ 0 \}
\end{equation}
is compact for each $t \geq 0$ since its image lies in the finite-dimensional subspace $\mathbb{C}^{n} \times \{ 0 \}$.

So, we have that $G_{0}(t) = G_{0}(t)\Pi_{2} + G_{0}(t)\Pi_{1}$, where $G_{0}(t)\Pi_{1}$ is a $C_{0}$-semigroup in $\mathbb{C}^{n} \times \{ 0 \}$. Thus, for all $t \geq 0$ we get that
\begin{equation}
	\label{EQ: DecompositionGFrictionCompactLinearNeutralDelay}
	G(t) = G_{0}(t)\Pi_{2} + U(t), \text{ where } U(t) \text{ is a compact operator}.
\end{equation}
Note that the spectrum of the generator of  $G_{0}(t)\Pi_{2}$ is determined by \eqref{EQ: DelayFrictionSemihroupSpectrum}. Thus, according to \eqref{EQ: DecompositionGFrictionCompactLinearNeutralDelay}, Corollary 2.11 on p.~258 and Theorem 3.1 on p.~329  from \cite{EngelNagel2000}, for items 1) and 2) of the theorem it is sufficient to show that the growth bound of $G_{0}(t)\Pi_{2}$ is determined by $a_{D}$. But this can be done following the same arguments as in Lemma 3.3 and Theorem 3.4 from Section 12.3 in \cite{Hale1977}, where this is shown in the context of the restriction of $G_{0}(t)\Pi_{2}$ to $\mathbb{E} \cap \left(\{0\}\times L_{2}(-\tau,0;\mathbb{C}^{n})\right)$ that is the subspace of $\phi \in C([-\tau,0];\mathbb{C}^{n})$ such that $\phi(0) + D_{0}\phi = 0$.

For item 3) we use spectral decompositions. Namely, let $\Pi^{s}$ and $\Pi^{u}$ be the complementary spectral projectors onto $\mathbb{H}^{s}(\nu)$ and $\mathbb{H}^{u}(\nu)$ respectively. Then for $p = -\nu + i \omega$ and all $w_{0} \in \mathbb{H}$ we have
\begin{equation}
	\label{EQ: DelayNeutralResolventDecompositionBoundedness}
	(A-pI)^{-1}w_{0} = (A-pI)^{-1}\Pi^{s}w_{0} + (A-pI)^{-1}\Pi^{u}w_{0} = \int_{0}^{\infty}e^{-p t}G(t)\Pi^{s}w_{0}dt + (A-pI)^{-1}\Pi^{u}w_{0},
\end{equation}
where we used the well-known representation of the resolvent through the Laplace transform of the semigroup (see Theorem 1.10 on p. 55 in \cite{EngelNagel2000}). From \eqref{EQ: DelayNeutralResolventDecompositionBoundedness} we obtain \eqref{EQ: DelayNeutralResolventBoundedness} since $\Pi^{u}$ has finite rank and the estimate \eqref{EQ: NeutralDelaySpectralDecompositionsEstimateComplementary} holds. The proof is finished.
\end{proof}

\begin{remark}
	\label{REM: ResolventEstimateNeutralDelay}
	Suppose we have the inequality $\| D_{0} \| \cdot \operatorname{max}\{e^{\nu \tau},1\} < 1$ satisfied, which is encountered in the case of equations without the neutral term (i.e. with $D_{0} \equiv 0$) and certain equations with small delays (see Section \ref{SUBSEC: EquationsWithSmallDelays}). Then from \eqref{EQ: SqueezingDExponent} it is clear that $a_{D} < -\nu$ since $|\delta(p)| < 1$ for all $p$ with $\operatorname{Re}p \geq -\nu$. Moreover, in this case we also have that the norms of $(A-pI)^{-1}B$ in $\mathcal{L}(\Xi;\mathbb{E})$ for $p=-\nu + i\omega$ tend to zero as $\omega \to \infty$. This follows from \eqref{EQ: DelayResolventPropetyLemma} where we deal with the case $\psi \equiv 0$ due to the boundary action of $B$. This property guarantees that the frequency inequalities as \eqref{EQ: Frequency-domainDelay} below may be checked only for $\omega$ varying in a finite interval. The latter is important for numerical verification of such inequalities. However, it seems that this property fails to hold for general delay equations of neutral type.
\end{remark}
\subsection{Nonlinear problems and linear inhomogeneous systems with quadratic constraints}
\label{SUBSEC: NDENonlinear}
Let us consider a nonlinear version of \eqref{EQ: LinearDelayEqs} given by
\begin{equation}
\label{EQ: NonlinearDelayEqs}
\frac{d}{dt} \left[ x(t) + D_{0}x_{t} \right] = \widetilde{A}x_{t} + \widetilde{B}F(t,Cx_{t})+\widetilde{W}(t),
\end{equation}
where $D_{0}$, $\widetilde{A}$ and $\widetilde{B}$ are as in \eqref{EQ: LinearDelayEqs}, $F \colon \mathbb{R} \times \mathbb{R}^{r} \to \mathbb{R}^{m}$ is a nonlinear continuous function, $\widetilde{W} \colon \mathbb{R} \to \mathbb{R}^{n}$ is a bounded continuous function, $C \colon C([-\tau,0];\mathbb{R}^{n}) \to \mathbb{R}^{r}$ is a bounded linear operator. For discussions on the well-posedness of \eqref{EQ: NonlinearDelayEqs} we refer below the text.

Recall that we do not distinguish between the elements of $\mathbb{E} = C([-\tau,0];\mathbb{R}^{r})$ and their images under the embedding $\phi \mapsto (\phi(0) + D_{0}\phi, \phi)$ into $\mathbb{H} = \mathbb{R}^{n} \times L_{2}(-\tau,0;\mathbb{R}^{n})$. Moreover, we keep the same notation for the induced by the embedding operators. Thus, we put $Cv := C\phi$ for $v=(\phi(0)+D_{0}\phi, \phi) \in \mathbb{H}$ and $\phi \in C([-\tau,0];\mathbb{R}^{r})$.

To study \eqref{EQ: NonlinearDelayEqs} by means of Theorem \ref{TH: FrequencyTheoremRealOperator} we have to consider a quadratic form $\mathcal{F}(v,\xi)$, where $v \in \mathbb{E}$ and $\xi \in \Xi = \mathbb{R}^{m}$, which takes into account some properties of the nonlinearity $F$. Namely, we will use the following property.
\begin{equation}
		\label{EQ: MonotoneSectorCondtionGeneral}
		\mathcal{F}(v_{1}-v_{2},F(t,Cv_{1}) - F(t,Cv_{2})) \geq 0 \text{ for any } t \in \mathbb{R} \text{ and } v_{1}, v_{2} \in \mathbb{E}.
\end{equation}
We call \eqref{EQ: MonotoneSectorCondtionGeneral} the \textit{monotone sector condition}. We will also require that $\mathcal{F}(v,0) \geq 0$ for all $v \in \mathbb{E}$. To understand such a name, consider, for example, the case of $F \colon \mathbb{R} \times \mathbb{R} \to \mathbb{R}$ (i.e. $m=1, r=1$) such that for some numbers $\varkappa_{1}, \varkappa_{2} \in \mathbb{R}$, $\varkappa_{1} \leq 0 < \varkappa_{2}$ we have
\begin{equation}
	\label{EQ: SectorCondMonotone}
	\varkappa_{1} \leq \frac{F(t,y_{1}) - F(t,y_{2})}{y_{1}-y_{2}} \leq \varkappa_{2} \text{ for all } t,y_{1},y_{2} \in \mathbb{R} \text{ with } y_{1}\not=y_{2}.
\end{equation}
For this case, an appropriate choice of $\mathcal{F}$ will be $\mathcal{F}(v,\xi) = (\xi - \varkappa_{1}Cv) (\varkappa_{2}Cv - \xi)$. It is easy to check that $\mathcal{F}$ satisfies \eqref{EQ: MonotoneSectorCondtionGeneral}. 

For more general situations it is convenient to consider the following class of quadratic forms $\mathcal{F}$, which can be used for Lipschitz nonlinearities $F \colon \mathbb{R} \times \mathbb{R}^{r} \to \mathbb{R}^{m}$. Suppose that the spaces $\Xi := \mathbb{R}^{m}$ and $\mathbb{M}:= \mathbb{R}^{r}$ are endowed with some (not necessarily Euclidean) inner products. Assume that there exists a constant $\Lambda > 0$ such that
\begin{equation}
		\label{EQ: LIPFcondition}
		|F(t,y_{1}) - F(t,y_{2})|_{\Xi} \leq \Lambda |y_{1}-y_{2}|_{\mathbb{M}} \text{ for any } t \in \mathbb{R} \text{ and } y_{1},y_{2} \in \mathbb{M}.
\end{equation}
Then it is clear that under \eqref{EQ: LIPFcondition} the form $\mathcal{F}(v,\xi):= \Lambda^{2} |Cv|^{2}_{\mathbb{M}} - |\xi|^{2}_{\Xi}$ satisfies \eqref{EQ: MonotoneSectorCondtionGeneral}. Note that the consideration of different inner products in $\mathbb{R}^{m}$ and $\mathbb{R}^{r}$ adds some flexibility (at least theoretically) to the corresponding frequency inequality \eqref{EQ: DelaySmithCondition}.

Now we see that the considered quadratic forms naturally include the operator $C$. Let us generalize the above construction. Let $\mathcal{G}(y, \xi)$ be a quadratic form of $y \in \mathbb{M}$ and $\xi \in \Xi$, where $\mathbb{M}=\mathbb{R}^{r}$ and $\Xi = \mathbb{R}^{m}$ are endowed with some (not necessarily Euclidean) inner products. Then we put $\mathcal{F}(v,\xi) := \mathcal{G}(Cv,\xi)$. To describe the Hermitian extension $\mathcal{F}^{\mathbb{C}}$ of $\mathcal{F}$, suppose that $\mathcal{G}$ is given by
\begin{equation}
	\label{EQ: NeutralDelayQuadraticFormG}
	\mathcal{G}(y, \xi) = (y, \mathcal{G}_{y}y)_{\mathbb{M}} + (\xi,\mathcal{G}_{y\xi}y)_{\Xi} + (\xi,\mathcal{G}_{\xi}\xi)_{\Xi},
\end{equation}
where $\mathcal{G}_{y} \in \mathcal{L}(\mathbb{M})$ and $\mathcal{G}_{\xi} \in \mathcal{L}(\Xi)$ are self-adjoint and $\mathcal{G}_{y\xi} \in \mathcal{L}(\mathbb{M};\Xi)$. Then for each $v= v_{1}+i v_{2}$ with $v_{1},v_{2} \in \mathbb{E}$ and $\xi = \xi_{1} + i\xi_{2}$ with $\xi_{1},\xi_{2} \in \Xi$ from the definition $\mathcal{F}^{\mathbb{C}}(v,\xi) = \mathcal{F}(v_{1},\xi_{1}) + \mathcal{F}(v_{2},\xi_{2})$ we have
\begin{equation}
	\label{EQ: HermitianExtensionNeutralDelayGeneral}
	\mathcal{F}^{\mathbb{C}}(v,\xi) = \mathcal{G}^{\mathbb{C}}(Cv,\xi) =  (Cv,\mathcal{G}_{y}Cv)_{\mathbb{M}^{\mathbb{C}}} + \operatorname{Re}( \xi,\mathcal{G}_{y\xi} Cv )_{\Xi^{\mathbb{C}}} + (\xi,\mathcal{G}_{y}\xi)_{\Xi^{\mathbb{C}}},
\end{equation}
where we omitted mentioning complexifications of the operators.

Now let us consider the extended control pair $(A,\widehat{B})$, where $\widehat{B} \in \mathcal{L}(\widehat{\Xi};\mathbb{H})$ and $\widehat{B}(\xi,\eta) = B\xi + \eta$ for $(\xi,\eta) \in \Xi \times \mathbb{H} =: \widehat{\Xi}$. For a given $\nu \in \mathbb{R}$, let $\mathfrak{M}^{T}_{v_{0}}(\nu)$ be the space of processes on $[0,T]$ through $v_{0}$ corresponding to $(A+\nu I,\widehat{B})$, i.e. the solution pairs $(v(\cdot), (\xi(\cdot),\eta(\cdot)))$ such that $\xi(\cdot) \in L_{2}(0,T;\Xi)$, $\eta(\cdot) \in L_{2}(0,T;\mathbb{H})$ and $v(\cdot)$ as a mild solution satisfies for $t \in [0,T]$
\begin{equation}
	\label{EQ: NeutralDelayTranslatedExtendedControlSys}
	\dot{v}(t) = (A+\nu I) v(t) + B \xi(t) + \eta(t)
\end{equation}
with $v(0) = v_{0}$. For $T=\infty$ we write simply $\mathfrak{M}_{v_{0}}(\nu)$ and additionally require that $v(\cdot) \in L_{2}(0,\infty;\mathbb{H})$ and $(\xi(\cdot),\eta(\cdot)) \in L_{2}(0,\infty;\widehat{\Xi})$.

Let $\Pi_{2} \colon \mathbb{H} \to L_{2}(-\tau,0;\mathbb{R}^{n})$ be the projector $(y,\phi) \mapsto \phi$, where $(y,\phi) \in \mathbb{R}^{n} \times L_{2}(-\tau,0;\mathbb{R}^{n}) = \mathbb{H}$. For any $(v(\cdot),(\xi(\cdot),\eta(\cdot))) \in \mathfrak{M}^{T}_{v_{0}}(\nu)$ we define $\mathcal{J}^{T}_{\mathcal{F}}$ as
\begin{equation}
	\label{EQ: NeutralInterpretationQuadraticFunctional}
	\mathcal{J}^{T}_{\mathcal{F}}(v(\cdot),\xi(\cdot),\eta(\cdot)) := \int_{0}^{T}\mathcal{G}(\mathcal{I}_{C}(\Pi_{2}v(t)), \xi(t)) dt,
\end{equation}
where the operator $\mathcal{I}_{C}$ is given by Theorem \ref{TH: AgalmanatedICgammaExistence}. From Theorem \ref{TH: DelayRegCond}, we have that \eqref{EQ: NeutralInterpretationQuadraticFunctional} is a well-defined quadratic functional and there exists a constant $C_{\mathcal{F}}>0$ independent of $T$ and such that
\begin{equation}
	\label{EQ: NeutralInterpretationEstimateFiniteT}
	\int_{0}^{T}|\mathcal{G}(\mathcal{I}_{C}(\Pi_{2}v(t)), \xi(t))|dt \leq C_{\mathcal{F}} \left( |v(0)|^{2}_{\mathbb{H}} + |v(T)|^{2}_{\mathbb{H}} + |v(\cdot)|^{2}_{L_{2}(0,T;\mathbb{H})} + | (\xi(\cdot),\eta(\cdot)) |^{2}_{L_{2}(0,T;\widehat{\Xi})}  \right).
\end{equation}
Due to \eqref{EQ: OperatorICgammaAgalmanatedComputationL1} and \eqref{EQ: NeutralStructuralCauchyRegularityOfSolutions}, $\mathcal{J}^{T}_{\mathcal{F}}(v(\cdot),\xi(\cdot),\eta(\cdot))$ agrees with $\int_{0}^{T}\mathcal{F}(Cv(t),\xi(t))dt$ for $v_{0} \in \mathcal{D}(A)$ and $(\xi(\cdot),\eta(\cdot)) \in C^{1}([0,T];\widehat{\Xi})$. Thus, the pair $(A+\nu I,\widehat{B})$ satisfies \nameref{DESC:REG2} w.r.t. $\widehat{\mathcal{F}}$, where $\widehat{\mathcal{F}}(v,\xi,\eta) := \mathcal{F}(v,\xi)$. In virtue of $\mathbb{E}_{0} = \mathbb{W} = \mathbb{H}$, we have \nameref{DESC:REG1} satisfied. Thus, the quadratic functional $\mathcal{J}_{\mathcal{F}}$ given by the limit of $\mathcal{J}^{T}_{\mathcal{F}}$ as $T \to \infty$ is well-defined, i.e. \nameref{DESC:QF} is satisfied due to Remark \ref{REM: RegConditionsImplyExtension}. However, for further investigations, it is more convenient to describe this functional by applying Theorem \ref{TH: AgalmanatedICgammaExistence} with $T=\infty$. This gives
\begin{equation}
	\label{EQ: NeutralInterpretationQuadraticFunctionalInfinity}
	\mathcal{J}_{\mathcal{F}}(v(\cdot),\xi(\cdot),\eta(\cdot)) = \int_{0}^{\infty}\mathcal{G}(\mathcal{I}_{C}(\Pi_{2}v(t)), \xi(t)) dt,
\end{equation}
where $\mathcal{I}_{C}$ is also given by Theorem \ref{TH: AgalmanatedICgammaExistence} with $T = \infty$. Since such an operator agrees with the corresponding operators from \eqref{EQ: NeutralInterpretationQuadraticFunctional} on finite-time intervals due to its definition and the commutative diagram from \eqref{EQ: CommDiagramICgammaAdorned}, we get that $\mathcal{J}_{\mathcal{F}}$ from \eqref{EQ: NeutralInterpretationQuadraticFunctionalInfinity} is indeed the limit of $\mathcal{J}^{T}_{\mathcal{F}}$ as $T \to \infty$.

Of course, the above properties lift to the complexificated problem and, in particular, the functional $\mathcal{J}_{\mathcal{F}^{\mathbb{C}}}$ corresponding to the Hermitian extension $\mathcal{F}^{\mathbb{C}}$ of $\mathcal{F}$ as in \eqref{EQ: HermitianExtensionNeutralDelayGeneral} can be defined. We are going to show that $\mathcal{J}_{\mathcal{F}^{\mathbb{C}}}$ satisfies \nameref{DESC:FT} w.r.t. the complexificated pair $(A+\nu I,\widehat{B})$. This is contained in the following lemma.
\begin{lemma}
	\label{LEM: QuadraticFormDelayQf}
	In the context of the complexificated problem associated with the extended control pair $(A+\nu I,\widehat{B})$, for any $(v(\cdot),(\xi(\cdot),\eta(\cdot))) \in \mathfrak{M}_{0}(\nu)$ we have
	\begin{equation}
		\label{EQ: NeutralFTIdentityLemma}
		\mathcal{J}_{\mathcal{F}^{\mathbb{C}}}(v(\cdot),\xi(\cdot),\eta(\cdot)) = \int_{-\infty}^{+\infty}\mathcal{F}^{\mathbb{C}}(\widehat{v}(\omega),\widehat{\xi}(\omega))d\omega,
	\end{equation}
    where $\widehat{v}(\cdot)$ and $\widehat{\xi}(\cdot)$ are the Fourier transforms of $v(\cdot)$ and $\xi(\cdot)$ respectively after extending them by zero to the negative semi-axis.
\end{lemma}
\begin{proof}
	 According to \eqref{EQ: HermitianExtensionNeutralDelayGeneral}, for $(v(\cdot),(\xi(\cdot),\eta(\cdot))) \in \mathfrak{M}_{0}(\nu)$ we have\footnote{Here, for convenience, we put $(\mathcal{I}_{C}v)(\cdot) := (\mathcal{I}_{C}\Pi_{2}v)(\cdot) = (\mathcal{I}_{C}\phi)(\cdot)$ for $v(\cdot)=(y(\cdot),\phi(\cdot))$. Moreover, as always, we omit emphasizing complexifications of the operators.}
    \begin{equation}
    	\label{EQ: DelayQuadraticFormLemmaFT}
    	\begin{split}
    	\mathcal{J}_{\mathcal{F}^{\mathbb{C}}} (v(\cdot),\xi(\cdot),\eta(\cdot)) = \int_{0}^{+\infty} \left( (\mathcal{I}_{C}v)(t), \mathcal{Q}_{y} (\mathcal{I}_{C}v)(t) \right)_{\mathbb{M}^{\mathbb{C}}}dt + 2\operatorname{Re}\int_{0}^{+\infty} \left(\xi(t), \mathcal{Q}_{y\xi} (\mathcal{I}_{C}v)(t) \right)_{{\Xi}^\mathbb{C}}dt + \\ + \int_{0}^{+\infty} ( \xi(t), \mathcal{Q}_{\xi}\xi(t) )_{{\Xi}^\mathbb{C}}dt.
    	\end{split}
    \end{equation}
    Applying the Parseval identity to each integral from the right-hand side in \eqref{EQ: DelayQuadraticFormLemmaFT}, one can see that in order to show \eqref{EQ: NeutralFTIdentityLemma} it is sufficient to prove that $\widehat{\left(\mathcal{I}_{C}v \right)}(\omega) = C\widehat{v}(\omega)$ for almost all $\omega$. Since $\widehat{v}(\omega)$ satisfies
    \begin{equation}
    	i\omega \widehat{v}(\omega) = (A +\nu I)\widehat{v}(\omega) + B\widehat{\xi}(\omega) + \widehat{\eta}(\omega) \text{ for almost all } \omega \in \mathbb{R},
    \end{equation}
    we get that $\widehat{v}(\cdot) \in L_{2,loc}(\mathbb{R};\mathcal{D}(A))$. In particular, $\Pi_{2} \widehat{v}(\cdot) \in L_{2,loc}(\mathbb{R};C([-\tau,0];\mathbb{C}^{n}))$. Note that $\Pi_{2}v(\cdot) \in \mathcal{A}^{2}_{\rho_{\nu}}(0,\infty;L_{2}(-\tau,0;\mathbb{C}^{n}))$ by Theorem \ref{TH: DelayRegCond} and the latter space is embedded into the embracing space $\mathcal{E}_{2}(0,\infty;L_{2}(-\tau,0;\mathbb{C}^{n}))$ due to Theorem \ref{TH: AgalmanatedEmbeddingLp}. Then Theorem \ref{TH: EmbracingFourierCommutesIGamma} guarantees that $\Pi_{2}\widehat{v}(\cdot) \in \mathcal{E}_{2}(\mathbb{R};L_{2}(-\tau,0;\mathbb{C}^{n}))$. Consequently, \eqref{EQ: OperatorCGammaComputationEmbracingL1loc} gives that $\widehat{\left(\mathcal{I}_{C}v \right)}(\omega) = C\widehat{v}(\omega)$ for almost all $\omega$. The proof is finished.
\end{proof}

Now let us assume that $F$ satisfies \eqref{EQ: LIPFcondition}. In this case solutions of \eqref{EQ: NonlinearDelayEqs} exist globally and continuously depend on initial conditions (see Chapter 12 in \cite{Hale1977}). Thus, for every $\phi_{0} \in C([-\tau,0];\mathbb{R}^{n})$ and $t_{0} \in \mathbb{R}$ there exists a unique continuous function $x(\cdot)=x(\cdot;t_{0},\phi_{0}) \colon [-\tau+t_{0},+\infty) \to \mathbb{R}^{n}$ such that $x(\theta) = \phi_{0}(\theta)$ for $\theta \in [-\tau+t_{0},t_{0}]$, $x(\cdot)+D_{0}x_{\cdot} \in C^{1}([t_{0},+\infty);\mathbb{R}^{n})$) and $x(t)$ satisfies \eqref{EQ: NonlinearDelayEqs} for $t \geq t_{0}$. We define a cocycle $\psi^{t}(q, \cdot) \colon \mathbb{E} \to \mathbb{E}$ over the translation flow\footnote{That is $\vartheta$ is the family of mappings $\vartheta^{t} \colon \mathbb{R} \to \mathbb{R}$ such that $\vartheta^{t}(s):=t+s$ for any $t, s \in \mathbb{R}$} $\vartheta$ on $\mathcal{Q}:=\mathbb{R}$ given by these solutions as
\begin{equation}
	\label{EQ: CocycleDelayRn}
	\psi^{t}(q,v_{0}):=v(t+q;q,v_{0}) \text{ for } v_{0} \in \mathbb{E}, t \geq 0 \text{ and } q \in \mathcal{Q},
\end{equation}
where $v(t;q,v_{0})=(x(t)+D_{0}x_{t},x_{t})$, $v_{0}=( \phi_{0}(0) + D_{0}\phi_{0} , \phi_{0})$ and $x(\cdot)=x(\cdot;q,\phi_{0})$. Clearly, $\psi^{0}(q,v)=v$ and the cocycle property $\psi^{t+s}(q,v)=\psi^{t}(\vartheta^{s}(q),\psi^{s}(q,v))$ is satisfied for all $v \in \mathbb{E}$, $q \in \mathcal{Q}$ and $t,s \geq 0$. For brevity, we denote the cocycle just by $\psi$. 

In the case both $F(t,y)$ and $\widetilde{W}(t)$ from \eqref{EQ: NonlinearDelayEqs} are $\sigma$-periodic in $t$ for some $\sigma>0$, it is convenient to take as $\mathcal{Q}$ the circle of length $\sigma$, i.e. $\mathbb{R}/\sigma \mathbb{Z}$. Then \eqref{EQ: CocycleDelayRn} also defines a cocycle over the translation semiflow on the circle. More generally, one may consider any flow (or a semiflow) $\vartheta$ on a complete metric space $\mathcal{Q}$ and generate a cocycle over $(\mathcal{Q},\vartheta)$ from the solutions of \eqref{EQ: NonlinearDelayEqs} with $F(t,Cx_{t})$ changed to $F(\vartheta^{t}(q),Cx_{t})$ and $W(t)$ changed to $W(\vartheta^{t}(q))$, where the new $F \colon \mathcal{Q} \times \mathbb{R}^{r} \to \mathbb{R}^{m}$ and $W \colon \mathcal{Q} \to \mathbb{R}^{n}$ are continuous. This is appropriate for studying almost periodic equations or for the linearization of semiflows over invariant sets. Here we omit such generalizations to avoid technicalities. See \cite{Anikushin2020Semigroups,Anikushin2021AADyn} for details.

The following lemma shows that $\psi$ is uniformly Lipschitz on compact time intervals.
\begin{lemma}
	\label{LEM: DelayLipschitzE}
	Let \eqref{EQ: LIPFcondition} be satisfied. Then for every $T>0$ there exists a constant $L_{T}>0$ such that
	\begin{equation}
		\| \psi^{t}(q,v_{1})-\psi^{t}(q,v_{2}) \|_{\mathbb{E}} \leq L_{T} \cdot \|v_{1}-v_{2}\|_{\mathbb{E}} \text{ for all } v_{1},v_{2} \in \mathbb{E}, t \geq 0 \text{ and } q \in \mathcal{Q}.
	\end{equation}
\end{lemma}
\begin{proof}
	Let $q \in \mathbb{R}$ and $\phi_{1},\phi_{2} \in C([-\tau,0];\mathbb{R}^{n})$ be fixed. For $i =1, 2$ consider the corresponding solutions $x_{i}(t)=x(t;q,\phi_{i})$ of \eqref{EQ: NonlinearDelayEqs}, where $t \geq q$, and put $v_{i}(t):=(x_{i}(t)+D_{0}x_{i,t},x_{i,t})$. We have to estimate $\|v_{1}(t)-v_{2}(t)\|_{\mathbb{E}}$, which is, by definition, the supremum norm $\| x_{1,t} - x_{2,t} \|_{\infty}$. Let $D \colon C([-\tau,0];\mathbb{R}^{n}) \to \mathbb{R}^{n}$ be the operator given by $D\phi = \phi(0) + D_{0}\phi$. Integrating \eqref{EQ: NonlinearDelayEqs} on $[q,t]$, we get that $\Delta_{t} = x_{1,t}-x_{2,t}$ satisfies
	\begin{equation}
		D\Delta_{t} = h(t), \text{ where } h(t) = D(\phi_{1}-\phi_{2}) + \int_{q}^{t}(F(s,Cx_{1,s})-F(s,Cx_{2,s}))ds.
	\end{equation}
    By Lemma 3.1, Chapter 12 in \cite{Hale1977}, there are constants $M>0$ and $\varkappa \in \mathbb{R}$ (depending only on $D$) such that
    \begin{equation}
    	\|\Delta_{t}\|_{\infty} \leq Me^{\varkappa (t-q)}\left[ \|\Delta_{q}\|_{\infty} + \sup_{q \leq s \leq t} |h(s)| \right].
    \end{equation}
    Moreover, we have the obvious estimate
    \begin{equation}
    	|h(t)| \leq \| D \| \cdot \|\Delta_{q}\|_{\infty} + \Lambda \cdot \|C\| \cdot \int_{q}^{t}\| \Delta_{s}  \|_{\infty} ds.
    \end{equation}
    Combining these inequalities with the Gronwall inequality, we obtain the conclusion of the lemma. The proof is finished.
\end{proof}

The following lemma establishes the link between classical solutions to the nonlinear problem \eqref{EQ: NonlinearDelayEqs} and the linear inhomogeneous problem corresponding to the pair $(A,B)$. We state it for differences between such solutions that is convenient for further studying. Its statement is a direct consequence of Theorem \ref{TH: DelayRegCond}.
\begin{lemma}
	\label{LEM: DelayVarOfConstants}
	For $i = 1, 2$ consider $\phi_{i} \in C([-\tau,0];\mathbb{R}^{n})$ and let $x_{i}(\cdot)=x_{i}(\cdot;t_{0},\phi_{i})$ be the corresponding solutions  to \eqref{EQ: NonlinearDelayEqs}. Put $v_{i}(t):=(x_{i}(t)+D_{0}x_{i,t},x_{i,t})$ and $w(t)=v_{1}(t)-v_{2}(t)$. Then we have for all $t \geq t_{0}$
	\begin{equation}
		\label{EQ: DelayVariationOfConstantFormula}
		w(t) = G(t-t_{0})w(t_{0}) + \int_{t_{0}}^{t}G(t-s)B\xi(s)ds,
	\end{equation}
    where $\xi(t)=F(t,Cv_{1}(t))-F(t,Cv_{2}(t))$.
\end{lemma}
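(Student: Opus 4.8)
The plan is to reduce the claim to the \emph{linear} inhomogeneous delay equation by subtracting the two copies of \eqref{EQ: NonlinearDelayEqs}, and then to quote the identification of generalized solutions of the delay equation with mild solutions of the abstract control system $(A,B)$ obtained in the proof of Lemma~\ref{LEM: DelayRegCond}.

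First I would write each $x_i(\cdot)=x_i(\cdot,t_0,\phi_i)$ in the integral (mild) form of \eqref{EQ: NonlinearDelayEqs} on $[t_0,+\infty)$, namely $\dot x_i(t)=\widetilde A x_{i,t}+\widetilde B F(t,Cx_{i,t})+W(t)$ with $x_{i,t_0}=\phi_i$. Since the forcing $W(t)$ is identical for $i=1$ and $i=2$, subtracting the two integral identities shows that $y(\cdot):=x_1(\cdot)-x_2(\cdot)$ is a generalized solution of the linear inhomogeneous delay equation $\dot y(t)=\widetilde A y_t+\widetilde B\xi(t)$ on $[t_0,+\infty)$, where $\xi(t)=F(t,Cx_{1,t})-F(t,Cx_{2,t})=F(t,Cv_1(t))-F(t,Cv_2(t))$ (recall that under the embedding $\phi\mapsto(\phi(0),\phi)$ the element $v_i(t)$ corresponds to $x_{i,t}$), with initial datum $y_{t_0}=\phi_1-\phi_2$. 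Under \textbf{(L1)} or \textbf{(L2)} one has $|\xi(t)|\le\Lambda|C(x_{1,t}-x_{2,t})|$, and $t\mapsto x_{i,t}$ is continuous into $C([-\tau,0];\mathbb{R}^n)$, so $\xi(\cdot)$ is bounded on compact intervals, hence lies in $L_2(t_0,T;\Xi)$ for every $T>t_0$, which makes the Bochner integral in \eqref{EQ: DelayVariationOfConstantFormula} meaningful.

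Next I would apply to this linear problem the correspondence proved inside Lemma~\ref{LEM: DelayRegCond}: for the pair $(A,B)$ the map $t\mapsto\bigl(y(t),y_t\bigr)$ coincides with the mild solution of the control system \eqref{EQ: ControlSystem} through $\bigl(y(t_0),y_{t_0}\bigr)$ driven by the control $\xi(\cdot)$. Now observe that $\bigl(y(t),y_t\bigr)=\bigl(x_1(t)-x_2(t),\,x_{1,t}-x_{2,t}\bigr)=v_1(t)-v_2(t)=w(t)$ and likewise $\bigl(y(t_0),y_{t_0}\bigr)=v_1(t_0)-v_2(t_0)=w(t_0)$. Hence the representation \eqref{EQ: MildSolution}, translated so that it starts at $t_0$ (which is legitimate because $A$ and $B$ do not depend on time), gives precisely \eqref{EQ: DelayVariationOfConstantFormula}.

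I do not anticipate a serious obstacle; the only point requiring a little care is that the subtraction must be carried out at the level of the integral form of \eqref{EQ: NonlinearDelayEqs}, since the generalized solutions $x_i$ need not be differentiable, and that the identification from the proof of Lemma~\ref{LEM: DelayRegCond} is formulated there for a control of the form $B\xi$ — which is exactly what remains after $W$ cancels. Alternatively one could bypass this bookkeeping altogether by applying that same identification to $v_1$ and $v_2$ separately, treating $W(t)$ as an additional $\mathbb{H}$-valued forcing $(W(t),0)$ via the extended control operator $\widetilde B$ of Section~\ref{SEC: RelaxingControllability}, and then subtracting the two resulting formulas.
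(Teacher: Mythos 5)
Your proof is correct, but it is organized differently from the paper's. The paper proves the formula by working with the \emph{nonlinear} abstract equation directly: for smooth data $\phi_i\in W^{1,2}(-\tau,0;\mathbb{R}^{n})$ it invokes Lemma~3.6 of \cite{BatkaiPiazzera2005} to get that each $v_i$ is a classical solution of $\dot v_i = Av_i + BF(t,Cv_i(t))$ (plus the forcing coming from $W$), writes the variation-of-constants formula for each $v_i$, subtracts, and then extends to $\phi_i\in\mathbb{E}$ by a density and continuity argument. You instead subtract first, at the level of the integral form of \eqref{EQ: NonlinearDelayEqs} in $\mathbb{R}^{n}$, observe that $y=x_1-x_2$ is a generalized solution of the \emph{linear} inhomogeneous delay equation with the known control $\xi(t)=F(t,Cv_1(t))-F(t,Cv_2(t))\in L_2^{\mathrm{loc}}$, and then quote the identification of generalized solutions with mild solutions of $(A,B)$ already established inside Lemma~\ref{LEM: DelayRegCond}. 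This buys you a cleaner argument: the smooth-data-plus-density step is not repeated but outsourced to a lemma that already contains it, and you never need the nonlinear abstract ODE to hold classically. (Your closing ``alternative'' — apply the identification to $v_1$ and $v_2$ separately with $W$ as an extra $\mathbb{H}$-valued forcing and subtract — is essentially the paper's route.) One small inaccuracy: the bound $|\xi(t)|\le\Lambda\,|C(x_{1,t}-x_{2,t})|$ is only valid under \textbf{(L2)}; under \textbf{(L1)} you should instead use $|\xi(t)|\le\Lambda\bigl(|Cx_{1,t}|+|Cx_{2,t}|\bigr)$, which still gives local boundedness and hence local square-integrability of $\xi$, so nothing breaks.
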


We also need to study the extension of the cocycle $\psi$ onto $\mathbb{H}$. Note that our approach from \cite{Anikushin2020Semigroups}, which is based on the variation of constant formula \eqref{EQ: DelayVariationOfConstantFormula} and the estimate from Lemma \ref{LEM: DelayMeasuEstimate}, works only for a restricted class of the operators $D_{0}$ (including $D_{0} = 0$). To overcome this difficulty, we use the equivalent to \eqref{EQ: NonlinearDelayEqs} integral equation and follow arguments from Lemma 2.6 in \cite{BurnsHerdmanStech1983}, which were already used in the proof in Theorem \ref{TH: DelayRegCond}.
\begin{lemma}
	\label{LEM: ExtensionOfCocycleFromEtoHDelayNeutral}
	Under \eqref{EQ: LIPFcondition} the cocycle $\psi$ from \eqref{EQ: CocycleDelayRn} can be uniquely extended to a cocycle in $\mathbb{H}$. Moreover, for any $T>0$ there exists a constant $L_{T}>0$ such that for any $v_{1},v_{2} \in \mathbb{H}$, $q \in \mathcal{Q}$ and $t \in [0,T]$ we have
	\begin{equation}
		\label{EQ: NDEUniformLipschitzH}
			| \psi^{t}(q,v_{1}) - \psi^{t}(q,v_{2}) |_{\mathbb{H}} \leq L_{T} \cdot | v_{1} - v_{2} |_{\mathbb{H}} \text{ for } t \geq 0.
	\end{equation}
\end{lemma}
\begin{proof}
	We give a sketch of the proof following ideas from Lemma 2.6 in \cite{BurnsHerdmanStech1983}. Let us put $q = 0$ (for simplicity), fix $\varepsilon>0$, $v_{0}:=(y,\phi) \in \mathbb{H}$ and consider the operator $L_{2}(-\tau,\varepsilon;\mathbb{R}^{n}) \to L_{2}(-\tau,\varepsilon;\mathbb{R}^{n})$ given (after integrating \eqref{EQ: NonlinearDelayEqs}) as
	\begin{equation}
		(S_{v_{0}}x)(t) := \begin{cases}
			-D_{0}x_{t} + y + \int_{0}^{t}(\widetilde{A}x_{s} + \widetilde{B}F(s,Cx_{s}) + \widetilde{W}(s) )ds \text{ for } 0 < t \leq \varepsilon, \\
			\phi(t) \text{ for } -\tau \leq t \leq 0.
		\end{cases}
	\end{equation}
    Note that it is well defined due to \eqref{EQ: LIPFcondition} and Theorem \ref{TH: OperatorIcAdornedLp} that allows to interpret the actions of $D_{0}$, $\widetilde{A}$ and $C$ on $x_{t}$ for almost all $t \in [0,\varepsilon]$ via the corresponding operators. Moreover, from Theorem \ref{TH: OperatorIcAdornedLp} we have the Lipschitz estimate
    \begin{equation}
    	\label{EQ: LipschitzEstimateFirstIterationNeutralDelay}
    	\| S_{v_{0}}x_{1}(\cdot) - S_{v_{0}} x_{2}(\cdot) \|_{L_{2}(-\tau,\varepsilon;\mathbb{R}^{n})} \leq ( \operatorname{Var}_{[-\tau,0]}(\delta_{D_{0}}) + \varepsilon M ) \cdot \| x_{1}(\cdot) - x_{2}(\cdot) \|_{L_{2}(-\tau,\varepsilon;\mathbb{R}^{n})},
    \end{equation}
    where $M>0$ is an independent of $\varepsilon$ constant. Similarly to \eqref{EQ: StructuralCauchyFormulaSecondIterateEstimate} for the second iterate $U_{v_{0}}:=S^{2}_{v_{0}}$ we get
    \begin{equation}
    	\begin{split}
    		  \| U_{v_{0}}x_{1}(\cdot) - U_{v_{0}} x_{2}(\cdot) \|_{L_{2}(-\tau,\varepsilon;\mathbb{R}^{n})} \leq \\ \leq ( \operatorname{Var}_{[-\varepsilon,0]}\delta_{D_{0}} + \varepsilon M ) \cdot ( \operatorname{Var}_{[-\tau,0]}(\delta_{D_{0}}) + \varepsilon M ) \cdot \| x_{1}(\cdot) - x_{2}(\cdot) \|_{L_{2}(-\tau,\varepsilon;\mathbb{R}^{n})}.
    	\end{split}
    \end{equation}
    Since $\operatorname{Var}_{[-\varepsilon,0]}\delta_{D_{0}}$ tends to zero as $\varepsilon \to 0+$, we get that $U_{v_{0}}$ is a uniform in $v_{0} = (y,\phi)$ contraction if $\varepsilon$ is chosen sufficiently small.
    
    Since the fixed point of $U_{v_{0}}=S^{2}_{v_{0}}$ is unique, it must be also the unique fixed point of $S_{v_{0}}$. Moreover,
    \begin{equation}
    	\| S_{v_{1}}x(\cdot) - S_{v_{2}}x(\cdot) \|_{L_{2}(-\tau,\varepsilon;\mathbb{R}^{n})} \leq | v_{1} - v_{2} |_{\mathbb{H}} \text{ for any } x(\cdot) \in L_{2}(-\tau,\varepsilon;\mathbb{R}^{n}).
    \end{equation}
    From this we can get that $U_{v_{0}}$ is a family of uniform in $v_{0}$ contractions with a Lipschitz dependence on $v_{0} \in \mathbb{H}$. Then the standard triangle inequality argument shows that its fixed point also depends on $v_{0}$ in a Lipschitz manner. Since $\varepsilon$ can be chosen independently of $v_{0}$, we can iterate the process to obtain for any $T>0$ a constant $C_{T}>0$ such that for $i=1,2$, any $v_{i} \in \mathbb{H}$ and $x_{i}(\cdot) \in L_{2}(-\tau,T;\mathbb{R}^{n})$ satisfying $S_{v_{i}}x_{i}(\cdot) = x_{i}(\cdot)$ we have the estimate
    \begin{equation}
    	\| x_{1}(\cdot) - x_{2}(\cdot) \|_{L_{2}(-\tau,T;\mathbb{R}^{n})} \leq C_{T} \cdot | v_{1} - v_{2} |_{\mathbb{H}}.
    \end{equation}
    From this, by applying Theorem \ref{TH: OperatorIcAdornedLp} to estimate $y(t) = Dx_{t}$ (here $D = \delta_{0} + D_{0}$ as in the proof of Lemma \ref{LEM: DelayLipschitzE}), we obtain the conclusion. The proof is finished.
\end{proof}

\begin{remark}
\label{REM: NDEGeneralizedDifferenceSolutions}
Clearly, for the solutions in $\mathbb{H}$ we also have the variation of constants formula as in \eqref{EQ: DelayVariationOfConstantFormula} satisfied if we interpret $Cv_{1}(t)$ and $Cv_{2}(t)$ according to Theorem \ref{TH: OperatorIcAdornedLp} that is justified by Theorem \ref{TH: DelayRegCond}. Thus, under \eqref{EQ: LIPFcondition} for any pair of trajectories $v_{1}(t)=\psi^{t}(q,v_{1,0})$, $v_{2}(t)=\psi^{t}(q,v_{2,0})$, where $v_{1,0}$,$v_{2,0} \in \mathbb{H}$ and $q \in \mathbb{R}$, the difference $w(t)=v_{1}(t)-v_{2}(t)$ satisfies (in the sense of mild solutions) the linear inhomogeneous equation
\begin{equation}
	\dot{w}(t)=Aw(t) + B\xi(t)
\end{equation}
for a proper $\xi(t)$ such that $\mathcal{F}(v(t),\xi(t)) \geq 0$ for almost all $t \geq 0$ and $\mathcal{F}(v,\xi) = \Lambda^{2}|Cv|^{2}_{\mathbb{M}} - |\xi|^{2}_{\Xi}$. Such linear inhomogeneous systems with quadratic constraints given by the forms like $\mathcal{F}$ will be considered in Theorem \ref{TH: QuadraticLyapunovFuncDelay}.
\end{remark}

Suppose that for some $\nu_{0} \in \mathbb{R}$ such that $-\nu_{0} > a_{D}$, where $a_{D}$ is given by \eqref{EQ: SqueezingDExponent}, there are exactly $j$ roots of \eqref{EQ: SpectrumADelay} with $\operatorname{Re} p > -\nu_{0}$ and no roots lie on the line $-\nu_{0} + i \mathbb{R}$. Using the spectral decompositions, we obtain $\mathbb{H} = \mathbb{H}^{s}(\nu_{0}) \oplus \mathbb{H}^{u}(\nu_{0})$, where $\mathbb{H}^{u}(\nu_{0})$ is the $j$-dimensional generalized eigenspace corresponding to the roots with $\operatorname{Re} p > -\nu_{0}$ and $\mathbb{H}^{s}(\nu_{0})$ is the complementary spectral subspace. In the case $D_{0}$ has no singular part, Theorem \ref{TH: SpectralDecompositionsDelayNeutral} guarantees that for any $v_{0} \in \mathbb{H}^{s}(\nu_{0})$ we have $e^{\nu_{0} t}G(t)v_{0} \to 0$ in $\mathbb{H}$ as $t \to +\infty$ and for any $v_{0} \in \mathbb{H}^{u}(\nu_{0})$ the past $G(t)v_{0} \in \mathbb{H}^{u}(\nu_{0})$ for $t \leq 0$ is uniquely defined (since $\mathbb{H}^{u}(\nu_{0})$ is finite-dimensional) and $e^{\nu_{0} t}G(t)v_{0} \to 0$ in $\mathbb{H}$ as $t \to -\infty$. This decomposition of the semigroup $G(t)$ is essential to determine the sign properties of $P$ from Theorem \ref{TH: QuadraticLyapunovFuncDelay} below.

Now let us consider the transfer operator $W(p):=C(A-pI)^{-1}B$ of the triple $(A,B,C)$ defined for $p$ from the resolvent set of $A$ (to be more precise, one should consider the complexifications of the operators $A,B,C$). As in \eqref{EQ: AandBoperatorsDelayRepresentation} there exists an $(r \times n)$-matrix-valued function of bounded variation $c(\theta)$, $\theta \in [-\tau,0]$, representing the operator $C$ as
\begin{equation}
	C\phi = \int_{-\tau}^{0} d c(\theta) \phi(\theta) \text{ for } \phi \in C([-\tau,0];\mathbb{R}). 
\end{equation}
Putting  $\gamma(p) := \int_{-\tau}^{0} e^{p \theta} d c(\theta)$ and using \eqref{EQ: DelayResolventPropetyLemma}, one may identify the transfer operator $W(p) \colon \Xi^{\mathbb{C}} \to \mathbb{M}^{\mathbb{C}}$ with the matrix
\begin{equation}
	\label{EQ: TransferOperatorDelayMatrixRepresentation}
	W(p)=\gamma(p)(\alpha(p)-pI_{n}-p\delta(p))^{-1}\widetilde{B}.
\end{equation}

Now we can formulate a theorem on the existence of quadratic Lyapunov functionals for the cocycle $\psi$ generated by \eqref{EQ: NonlinearDelayEqs} in $\mathbb{H}$. Its proof is similar to the proof of other theorems of this type (see \cite{Anikushin2020FreqParab,Gelig1978,KuzReit2020}) and we present it here for the sake of completeness.
\begin{theorem}
	\label{TH: QuadraticLyapunovFuncDelay}
	Suppose $D_{0}$ has no singular part (see \eqref{EQ: D0HasnoSingularPart}) and let $F$ satisfy \eqref{EQ: LIPFcondition}. Suppose we are given with a quadratic form $\mathcal{F}(v,\xi):= \mathcal{G}(Cv,\xi)$ (where $\mathcal{G}$ as in \eqref{EQ: NeutralDelayQuadraticFormG}), which satisfies $\mathcal{F}(v,0) \geq 0$ for all $v \in \mathbb{E}$ and \eqref{EQ: MonotoneSectorCondtionGeneral}. Suppose also that for some  $-\nu_{0} > a_{D}$, where $a_{D}$ is given by \eqref{EQ: SqueezingDExponent}, equation \eqref{EQ: SpectrumADelay} does not have roots with $\operatorname{Re}p = -\nu_{0}$ and have exactly $j$ roots with $\operatorname{Re}p > -\nu_{0}$. Let the frequency inequality 
	\begin{equation}
		\label{EQ: Frequency-domainDelay}
		\sup_{\omega \in \mathbb{R}}\sup\limits_{\xi \in \Xi^{\mathbb{C}}} \frac{\mathcal{G}^{\mathbb{C}}(-W(- \nu_{0}+i\omega)\xi,\xi)}{|\xi|^{2}_{\Xi^{\mathbb{C}}}} < 0
	\end{equation}
	be satisfied. Then there exist $P^{*}=P \in \mathcal{L}(\mathbb{H})$ and $\delta>0$ such that for $V(v):=(v, Pv )_{\mathbb{H}}$ and the cocycle $\psi$ in $\mathbb{H}$ given by Lemma \ref{LEM: ExtensionOfCocycleFromEtoHDelayNeutral} we have
	\begin{equation}
		\label{EQ: QuadraticInequality}
		\begin{split}
			e^{2\nu_{0} r}V(\psi^{r}(q,v_{1})-\psi^{r}(q,v_{2}))-e^{2\nu_{0} l}V(\psi^{l}(q,v_{1})-\psi^{l}(q,v_{2})) \leq\\\leq -\delta \int_{l}^{r}e^{2\nu_{0} s}|\psi^{s}(q,v_{1})-\psi^{s}(q,v_{2})|^{2}_{\mathbb{H}}ds
		\end{split}
	\end{equation}
	satisfied for all $q \in \mathbb{R}$, $0 \leq l \leq r$ and all $v_{1},v_{2} \in \mathbb{H}$. Moreover, $P$ is positive on $\mathbb{H}^{s}(\nu_{0})$ and negative on the $j$-dimensional subspace $\mathbb{H}^{u}(\nu_{0})$.
\end{theorem}
\begin{proof}
	Due to the cocycle property, it is sufficient to show \eqref{EQ: QuadraticInequality} for $l=0$. Let us justify that Theorem \ref{TH: FrequencyTheoremRealOperator} can be applied to the pair $(A+\nu_{0} I,B)$ corresponding to \eqref{EQ: NonlinearDelayEqs} and the form $\mathcal{F}$ given in the statement. Note that in our case $\mathbb{E}_{0}=\mathbb{W}=\mathbb{H}$. By item 3) from Theorem \ref{TH: SpectralDecompositionsDelayNeutral}, we have \nameref{DESC:RES} satisfied. From the arguments preceding \eqref{EQ: NeutralInterpretationQuadraticFunctionalInfinity} and Lemma \ref{LEM: QuadraticFormDelayQf} we get \nameref{DESC:QF} and \nameref{DESC:FT} satisfied for the extended control system $(A+\nu_{0} I,\widehat{B})$ and the extended quadratic form. Note that inequality \eqref{EQ: Frequency-domainDelay} in our context is exactly the same as \eqref{EQ: FrequencyConditionNegative}. Thus, Theorem \ref{TH: FrequencyTheoremRealOperator} gives a self-adjoint operator $P \in \mathcal{L}(\mathbb{H})$ and a number $\delta>0$ such that for the quadratic form $V(v):= (v, Pv)_{\mathbb{H}}$ we have
	\begin{equation}
		\label{EQ: MonotoneInequality1}
		V(v(T))-V(v(0)) + \int_{0}^{T}\mathcal{F}(v(s),\xi(s))ds \leq -\delta \int_{0}^{T}(|v(s)|^{2}_{\mathbb{H}} + |\xi(s)|^{2}_{\Xi})ds
	\end{equation}
	for all $\xi(\cdot) \in L_{2}(0,T;\Xi)$ and $v(\cdot)$ being a mild solution on $[0,T]$ to 
	\begin{equation}
		\dot{v}(t) = (A + \nu_{0} I)v(t) + B\xi(t)
	\end{equation}
	with $v(0)=v_{0} \in \mathbb{H}$.
	
	Putting $\xi(\cdot) \equiv 0$ in \eqref{EQ: MonotoneInequality1} and using the property $\mathcal{F}(v,0) \geq 0$, we get
	\begin{equation}
		\label{EQ: MonotoneInequality2}
		V(v(T)) - V(v(0)) \leq -\delta \int_{0}^{T}|v(s)|^{2}_{\mathbb{H}}ds
	\end{equation}
    that is the Lyapunov inequality for the $C_{0}$-semigroup $e^{\nu_{0} t}G(t)$ generated by the operator $A+\nu_{0} I$. Note that this semigroup admits an exponential dichotomy in $\mathbb{H}$ with $\mathbb{H}^{s}(\nu_{0})$ and $\mathbb{H}^{u}(\nu_{0})$ being the stable and unstable subspaces respectively according to Theorem \ref{TH: SpectralDecompositionsDelayNeutral}. Thus, Theorem \ref{TH: HighRankConesTheorem} gives us the desired sign properties of $P$.
	
	In terms of \eqref{EQ: MonotoneInequality1} we have $v(t)=e^{\nu_{0} t}\widetilde{v}(t)$ and $\xi(t) = e^{\nu_{0} t} \widetilde{\xi}(t)$, where $\widetilde{v}(t)=\widetilde{v}(t;v_{0},\widetilde{\xi})$ is a solution to \eqref{EQ: ControlSystem} corresponding to the pair $(A,B)$. Thus, \eqref{EQ: MonotoneInequality1} can be described as	
	\begin{equation}
		\label{EQ: MonotoneIneq1New}
		e^{2\nu_{0} T}V(v(T))-V(v(0)) + \int_{0}^{T}e^{2\nu_{0} s}\mathcal{F}(v(s),\xi(s))ds \leq -\delta \int_{0}^{T}e^{2\nu_{0} s}(|v(s)|^{2}_{\mathbb{H}} + |\xi(s)|^{2}_{\Xi})ds
	\end{equation}
	for all solution pairs $\xi(\cdot) \in L_{2}(0,T;\Xi)$ and $v(t)=v(t;v_{0},\xi)$ corresponding to $(A,B)$.
	
	Let us consider \eqref{EQ: MonotoneIneq1New} with $v(t)=\psi^{t}(q,v_{1})-\psi^{t}(q,v_{2})$ and $\xi(t) = F(t+q,C\psi^{t}(q,v_{1}))-F(t+q,C\psi^{t}(q,v_{2}))$ that is justified by Lemma \ref{LEM: DelayVarOfConstants} and Remark \ref{REM: NDEGeneralizedDifferenceSolutions}. Note that we have $\mathcal{F}(v(s),\xi(s)) \geq 0$ for almost all $s \geq 0$ and, consequently, the integral from the left-hand side in \eqref{EQ: MonotoneIneq1New} can be eliminated. This clearly gives \eqref{EQ: QuadraticInequality} and finishes the proof. 
\end{proof}

\begin{remark}
	\label{REM: NDEoptimalityFreqIneq}
	It can be shown that frequency inequalities as in \eqref{EQ: Frequency-domainDelay} are optimal in the following sense. They provide necessary and sufficient conditions for the existence of a common quadratic functional like $V(\cdot)$ such that an analog of \eqref{EQ: QuadraticInequality} is satisfied for all solutions corresponding to the pair $(A,B)$ and satisfying the quadratic constraint given by $\mathcal{F}$. The ``sufficiency'' follows from the arguments used in Theorem \ref{TH: QuadraticLyapunovFuncDelay}. The ``necessity'' is a purely algebraic fact known as a theorem on the losslessness of the S-procedure, which was discovered by V.A.~Yakubovich \cite{Gelig1978}. For particular classes of the forms like $\mathcal{F}$, the class of systems can be further refined. For example, for the forms obtained through the Lipschitz condition \eqref{EQ: LIPFcondition}, i.e. $\mathcal{F}(v,\xi) = \Lambda^{2}|Cv|^{2}_{\mathbb{M}} - |\xi|^{2}_{\Xi}$, it is sufficient to consider only the class of linear problems. Note that if the frequency inequality is violated, the dynamics may significantly depend on the choice of the nonlinearity $F$ from the considered class. We refer to our work \cite{Anikushin2020FreqParab} for details and discussions (see Section 5.2 therein).
\end{remark}

In our work \cite{Anikushin2020Geom}, besides \eqref{EQ: QuadraticInequality} and the uniform Lipschitz property \eqref{EQ: NDEUniformLipschitzH}, for the existence of inertial manifolds it is important to estimate the exponent of decay for the Kuratowski measure of noncompactness. For this we use the exponent $a_{D}$ from \eqref{EQ: SqueezingDExponent}. It can be shown analogously to \eqref{EQ: DecompositionGFrictionCompactLinearNeutralDelay} that the difference of $\psi^{t}(q,\cdot) - G_{0}(t)\Pi_{2}$ is a finite-rank operator with the range contained in $\mathbb{R}^{n} \times \{0\}$ and it takes bounded subsets in $\mathbb{H}$ to uniformly in ($q \in \mathcal{Q}$) bounded subsets. In particular, since $a_{D}$ determines the growth bound of $G_{0}(t)\Pi_{2}$, this implies that for every $\varepsilon > 0$ there exists $M_{\varepsilon}>0$ such that the Kuratowski measure $\alpha_{K}$ in $\mathbb{H}$ admits the exponential estimate
\begin{equation}
	\label{EQ: AlphaMeasureSqueezingNeutral}
	\alpha_{K}(\psi^{t}(\mathcal{Q}, \mathcal{B})) \leq M_{\varepsilon} \cdot e^{(a_{D}+\varepsilon)t} \cdot \operatorname{diam}\mathcal{B} \text{ for all } t \geq 0,
\end{equation}
for any bounded subset $\mathcal{B} \subset \mathbb{H}$. In the case $a_{D} < 0$ from \eqref{EQ: AlphaMeasureSqueezingNeutral} we have the exponential decay for a sufficiently small $\varepsilon>0$. Clearly, the decay exponent $-(a_{D}+\varepsilon)$ can always be chosen greater that $\nu_{0}$ from Theorem \ref{TH: QuadraticLyapunovFuncDelay}. This is the condition we require in \cite{Anikushin2020Geom} to construct inertial manifolds.
\begin{remark}
	\label{REM: AfterDelayQFTh}
	Under the hypotheses of Theorem \ref{TH: QuadraticLyapunovFuncDelay} with $\nu_{0} > 0$ we have\footnote{If there exists at least one bounded complete trajectory.} a family of $j$-dimensional submanifolds in $\mathbb{H}$, which forms an inertial manifold for the cocycle $\psi$ (see \cite{Anikushin2020Geom}). These manifolds are Lipschitz and attract exponentially fast (with the exponent $\nu_{0}$) all trajectories of $\psi$ by trajectories lying on the manifold (this is the so-called exponential tracking property). Moreover, they are also $C^{1}$-differentiable and uniformly normally hyperbolic\footnote{For this it is required to satisfy \eqref{EQ: Frequency-domainDelay} for two distinct values of $\nu_{0}$ separating the same $j$ roots. Clearly, if \eqref{EQ: Frequency-domainDelay} is satisfied for some $\nu_{0}$, it is also satisfied for all sufficiently close to $\nu_{0}$ values due to the first resolvent identity combined with \eqref{EQ: DelayNeutralResolventEstimateEfromH} and \eqref{EQ: DelayNeutralResolventBoundedness}.} provided that $F(t,y)$ is $C^{1}$-differentiable in $y$ (see \cite{Anikushin2020Geom,Anikushin2020Semigroups}).
\end{remark}
\begin{remark}
	In \cite{Anikushin2020Semigroups} it is shown that in the case $D_{0} \equiv 0$ we have $\psi^{\tau}(q,\mathbb{H}) \subset \mathbb{E}$ and for all $T \geq \tau$ there exists a constant $L_{T} > 0$ such that the smoothing estimate
	\begin{equation}
		\label{EQ: DelaySmoothingEstimate}
		\| \psi^{t}(q,v_{1})-\psi^{t}(q,v_{2}) \|_{\mathbb{E}} \leq L_{T} \cdot |v_{1}-v_{2}|_{\mathbb{H}}
	\end{equation}
    is valid for all $t \in [\tau,T]$, $q \in \mathcal{Q}$ and $v_{1},v_{2} \in \mathbb{H}$. In the case of general neutral equations, there are no such estimates. However, one can show that \eqref{EQ: DelaySmoothingEstimate} is satisfied provided that $D_{0}$ can be extended to a bounded operator from $L_{2}(-\tau,0;\mathbb{R}^{n})$ to $\mathbb{R}^{n}$.
\end{remark}
\begin{remark}
We may use \eqref{EQ: DelaySmoothingEstimate} to construct inertial manifolds in $\mathbb{E}$. However, in the general situation we do not know how to show that such manifolds belong to $\mathbb{E}$. Note also that S.~Chen and J.~Shen \cite{ChenShen2021} constructed inertial manifolds in $\mathbb{E}$ (see Subsection \ref{SUBSEC: EquationsWithSmallDelays} below).
\end{remark}

Now let us discuss several forms of the frequency inequality from \eqref{EQ: Frequency-domainDelay}.

Let us assume that $F$ satisfies \eqref{EQ: LIPFcondition} and consider the form $\mathcal{F}(v,\xi):= \Lambda^{2} |Cv|^{2}_{\mathbb{M}} - |\xi|^{2}_{\Xi}$. In this case the frequency inequality \eqref{EQ: Frequency-domainDelay} is equivalent to the inequality
\begin{equation}
	\label{EQ: DelaySmithCondition}
	\sup_{\omega \in \mathbb{R}}\|W(-\nu_{0} + i \omega )\|_{\Xi^{\mathbb{C}} \to \mathbb{M}^{\mathbb{C}}} < \Lambda^{-1},
\end{equation}
which was used by R.A.~Smith \cite{Smith1992,Smith1987OrbStab} (see also M.~Miklav\v{c}i\v{c} \cite{Miclavcic1991}). In particular, our thery allows to unify his works on convergence theorems for periodic equations with delay and developments of the Poincar\'{e}-Bendixson theory (see \cite{Anikushin2020Geom}). We will show in Subsection \ref{SUBSEC: EquationsWithSmallDelays} that certain results concerned with the existence of $n$-dimensional inertial manifolds for (neutral) delay equations with small delays can be obtained through the Smith frequency condition \eqref{EQ: DelaySmithCondition} and our theory from \cite{Anikushin2020Geom}.

For the case described in \eqref{EQ: SectorCondMonotone} consider the quadratic form $\mathcal{F}(v,\xi):= (\xi - \varkappa_{1} Cv)(\varkappa_{2} Cv - \xi)$. Then the frequency inequality from \eqref{EQ: Frequency-domainDelay} is equivalent to
\begin{equation}
	\label{EQ: CircleCriteriaFreqCond}
	\sup_{\omega \in \mathbb{R}}\operatorname{Re}\left[ (1+\varkappa_{1} W(-\nu_{0}+i\omega))^{*}(1+\varkappa_{2} W(-\nu_{0}+i\omega) ) \right] > 0.
\end{equation}
Let us suppose that $\varkappa_{1} \leq 0 < \varkappa_{2}$ to make $\mathcal{F}$ satisfy $\mathcal{F}(v,0) \geq 0$. In control theory, \eqref{EQ: CircleCriteriaFreqCond} is known as the Circle Criterion \cite{Gelig1978}. Its geometric meaning is in that the curve $\omega \mapsto W(-\nu_{0} + i\omega)$ on the complex plane must lie inside the circle $\operatorname{Re}[(1+\varkappa_{1}z)^{*}(1+\varkappa_{2}z)]=0$. In \cite{Anikushin2020Semigroups,Anikushin2021SS} we used this criterion (with $\varkappa_{1} = 0$) to obtain the existence of two-dimensional inertial manifolds in the Suarez-Schopf model. 

We can generalize \eqref{EQ: CircleCriteriaFreqCond} as follows. Assuming that $F(t,y)$ is $C^{1}$-differentiable in $y$, \eqref{EQ: CircleCriteriaFreqCond} reflects that $\Xi = \mathbb{M}$ and $F'_{y}(t,y) := \frac{d}{dy}F(t,y)$ is a self-adjoint operator in $\mathcal{L}(\mathbb{M})$ such that 
\begin{equation}
	\label{EQ: NeutralDelayFreqIneqSelfAdjointCondition}
	\varkappa_{1} (y,y)_{\mathbb{M}} \leq (y, F'_{y}(t,y_{0}) y)_{\mathbb{M}} \leq \varkappa_{2} (y,y)_{\mathbb{M}} \text{ for all } y,y_{0} \in \mathbb{M} \text{ and }  t \geq 0. 
\end{equation}
Then the choice $\mathcal{G}(y,\xi) := -\varkappa_{1}\varkappa_{2} (y,y)_{\mathbb{M}} + (\varkappa_{1} + \varkappa_{2})(y,\xi)_{\mathbb{M}} - (\xi,\xi)_{\mathbb{M}}$ leads to that $\mathcal{F}(v,\xi) := \mathcal{G}(Cv,\xi)$ will satisfy \eqref{EQ: MonotoneSectorCondtionGeneral}. For this, it is sufficient to show that $\mathcal{G}(y,\xi) \geq 0$ for any $y \in \mathbb{M}$ and $\xi = S y$, where $S$ is any self-adjoint operator in $\mathcal{L}(\mathbb{M})$ satisfying \eqref{EQ: NeutralDelayFreqIneqSelfAdjointCondition} with $F'_{y}(t,y_{0})$ exchanged with $S$. This can be done by using the orthonormal basis of eigenvectors of $S$. In more general situations, the Spectral Theorem for bounded self-adjoint operators can be used (see Section 5.5 in \cite{Anikushin2023Comp}). Then $\mathcal{G}(Cv_{1} - Cv_{2},F(t,Cv_{1})-F(t,Cv_{2})) \geq 0$ is equivalent to $\mathcal{G}(y,Sy) \geq 0$ with $y = Cv_{1} - Cv_{2}$ and $S = \int_{0}^{1}F'_{y}(t,(1-s)Cv_{1} + s Cv_{2})ds$. Thus, the corresponding quadratic constraint takes into account the self-adjointness of $F'_{y}$. For the corresponding frequency inequality \eqref{EQ: Frequency-domainDelay} we have to satisfy for some $\delta>0$ and any $\xi \in \Xi^{\mathbb{C}} = \mathbb{M}^{\mathbb{C}}$ the inequality
\begin{equation}
	\varkappa_{1} \varkappa_{2} | W(-\nu_{0} + i \omega) \xi |^{2}_{\Xi^{\mathbb{C}}} + (\varkappa_{1} + \varkappa_{2})\operatorname{Re}( W(-\nu_{0} + i \omega) \xi, \xi)_{\Xi^{\mathbb{C}}} + |\xi|^{2}_{\Xi^{\mathbb{C}}} \geq \delta |\xi|^{2}_{\Xi^{\mathbb{C}}} \text{ for all } \omega \in \mathbb{R}.
\end{equation}
\subsection{Equations with small delays}
\label{SUBSEC: EquationsWithSmallDelays}
Let us consider \eqref{EQ: NonlinearDelayEqs} with $\widetilde{A} \equiv 0$ and $\widetilde{B} = I_{n}$ being the identity $(n \times n)$-matrix, i.e. the delay equation
\begin{equation}
	\label{EQ: ConstantDelayExample}
	\frac{d}{dt}\left( x(t) + D_{0}x_{t} \right)=F(t,Cx_{t}) + \widetilde{W}(t),
\end{equation}
where $F$ satisfies \eqref{EQ: LIPFcondition} for some $\Lambda>0$. Let us assume that $\Xi = \mathbb{R}^{n}$ and $\mathbb{M}=\mathbb{R}^{r}$ are endowed with the Euclidean inner products. We assume that $C \colon C([-\tau,0]; \mathbb{R}^{n}) \to \mathbb{R}^{r}$ has the form
\begin{equation}
	\label{EQ: SmallDelaysOpC}
	C\phi = \left( \int_{-\tau}^{0}\phi_{j_{1}}(\theta)d\mu_{1}(\theta),\ldots,  \int_{-\tau}^{0}\phi_{j_{r}}(\theta)d\mu_{r}(\theta) \right),
\end{equation}
where $\mu_{k}$ are Borel signed measures on $[-\tau,0]$ having total variation $\leq 1$ and $1 \leq j_{1} \leq j_{2} \leq \ldots \leq j_{r} \leq n$ are integers. In particular, taking $\mu_{k} = \delta_{-\tau_{k}}$ with some $\tau_{k} \in [0,\tau]$, we may get discrete delays since $\int_{-\tau}^{0}\phi_{j_{k}}(\theta)d\mu_{k}(\theta) = \phi_{j_{k}}(-\tau_{k})$.

Consideration of \eqref{EQ: ConstantDelayExample} in the context of Theorem \ref{TH: QuadraticLyapunovFuncDelay} gives the following.
\begin{theorem}
	\label{TH: SmallDelayNdimTheorem}
	In the context of \eqref{EQ: ConstantDelayExample} suppose that $D_{0}$ has no singular part (see \eqref{EQ: D0HasnoSingularPart}) and \eqref{EQ: SmallDelaysOpC} is satisfied. Let the Lipschitz constant $\Lambda$ from \eqref{EQ: LIPFcondition} for $F$ be calculated w.r.t. the Euclidean norms. Suppose that for some $\nu_{0} > 0$ we have
	\begin{equation}
		\label{EQ: SmallDelaysSqueezingKappa}
		\varkappa(\nu_{0}) := e^{ \tau \nu_{0}} \cdot \|D_{0}\| < 1.
	\end{equation}
	Let the inequality
	\begin{equation}
		\label{EQ: SmallDelaysThMainIneq}
		\sqrt{r} \cdot \frac{e^{\tau \nu_{0}}}{\nu_{0}} \cdot \frac{1}{1 - \varkappa(\nu_{0})} < \Lambda^{-1}.
	\end{equation}
	be satisfied. Then the conclusion of Theorem \ref{TH: QuadraticLyapunovFuncDelay} holds for \eqref{EQ: ConstantDelayExample} with the given $\nu_{0}$ and $j=n$.
\end{theorem}
\begin{proof}
	For brevity, by $|\cdot|$ we denote the Euclidean norm and the associated matrix norm. Let us verify the frequency inequality from \eqref{EQ: DelaySmithCondition} for $p= -\nu_{0} + i\omega$. Since $\widetilde{A} \equiv 0$ and $\widetilde{B} = I_{n}$, in terms of \eqref{EQ: TransferOperatorDelayMatrixRepresentation} we have $W(p) = \gamma(p) (-pI - p\delta(p))^{-1} = -p^{-1} \cdot \gamma(p) \cdot (I+\delta(p))^{-1}$. Note that $|\delta(p)| \leq e^{\tau \nu_{0}} \|D_{0}\| = \varkappa(\nu_{0}) < 1$ and, consequently, we obtain
	\begin{equation}
		| I + \delta(p) |^{-1} \leq (1-\varkappa(\nu_{0}))^{-1}.
	\end{equation}
    Moreover, in virtue of \eqref{EQ: SmallDelaysOpC}, for any $k=1,\ldots, r$ there is a unique nonzero element in the $k$-th row of the $(r \times n)$-matrix $\gamma(p)$ and it is given by $\int_{-\tau}^{0}e^{p\theta}d\mu_{k}(\theta)$. Thus,
    \begin{equation}
    	|\gamma(p)| \leq \sqrt{r} \cdot \max_{1 \leq k \leq r} \left|\int_{-\tau}^{0}e^{p\theta}d\mu_{k}(\theta) \right| \leq \sqrt{r} e^{\tau \nu_{0}}.
    \end{equation}
    This implies that for any $\omega \in \mathbb{R}$ we have
    \begin{equation}
    	|W(p)| \leq \sqrt{r} \cdot \frac{e^{\tau \nu_{0}}}{\nu_{0}} \cdot \frac{1}{1 - \varkappa(\nu_{0})}
    \end{equation}
    Consequently, the frequency inequality is satisfied under \eqref{EQ: SmallDelaysThMainIneq}.
    
    In Remark \ref{REM: ResolventEstimateNeutralDelay} we justified that $-\nu_{0} > a_{D}$. Since in our case (i.e. $\widetilde{A} = 0$) the eigenvalues of the linear part are given by $p \in \mathbb{C}$ such that $p^{n} \cdot \operatorname{det}(I_{n}+\delta(p)) = 0$, there is the only eigenvalue with $\operatorname{Re p} \geq -\nu_{0}$ given by the zero with multiplicity $n$. Thus, the proof is finished.
\end{proof}

Note that \eqref{EQ: SmallDelaysThMainIneq} is satisfied if the delay value $\tau$ is taken sufficiently small and $\nu_{0}$ is taken sufficiently large such that \eqref{EQ: SmallDelaysSqueezingKappa} is preserved\footnote{Strictly speaking, the operator $D_{0}$ also depends on $\tau$ since it acts in the space $C([-\tau,0];\mathbb{R}^{n})$. In applications, we usually have $M:=\sup_{\tau}\|D_{0}(\tau)\| < 1$. For example, $D_{0}\phi = \alpha \phi(-\tau)$ with $|\alpha| < 1$. This allows to vary $\tau \to 0$ and $\nu_{0} \to +\infty$ such that $\tau \nu_{0} < -\ln M$.}. We are going to discuss several papers on inertial manifolds for delay equations with small delays. Speaking in terms of delay equations in $\mathbb{R}^{n}$, their results guarantee the existence of $n$-dimensional inertial manifolds. Such results can be understood in the following sense. Suppose we have some ODE in $\mathbb{R}^{n}$ with a globally Lipschitz vector field. If we let some variables to act with a delay (that is natural for models of electrical circuits), then the resulting dynamical system becomes infinite-dimensional. However, if the delay value is small enough (and does not significantly change the Lipschitz constant), the limiting dynamics turns out to be still $n$-dimensional, as for the original ODE, although there may appear new phenomena caused by the presence of delays.

In particular, when $D_{0} \equiv 0$ and $\nu_{0} = \tau^{-1}$ the inequality in \eqref{EQ: SmallDelaysThMainIneq} takes the form
\begin{equation}
	\label{EQ: SmallDelaysDzeroIneq}
	\sqrt{r} \cdot e \cdot \tau < \Lambda^{-1}.
\end{equation} 

In \cite{Chicone2003} C.~Chicone extended the works of Yu.A.~Ryabov \cite{Ryabov1967} and R.D.~Driver \cite{Driver1968SmallDelays} on the existence of $n$-dimensional inertial manifolds for delay equations in $\mathbb{R}^{n}$ with small delays. Let us compare their results with Theorem \ref{TH: SmallDelayNdimTheorem}. First of all, the class of equations in \cite{Chicone2003} is greater than \eqref{EQ: ConstantDelayExample}, but our class covers many equations arising in practice. Secondly, in \cite{Chicone2003} it is used a Lipschitz constant $K$ of the nonlinearity as a mapping from $C([-\tau,0];\mathbb{R}^{n}) \to \mathbb{R}^{n}$. Thus, in our case $K = \sqrt{r} \Lambda$ since for $\phi_{1}$,$\phi_{2} \in C([-\tau,0];\mathbb{R}^{n})$ we have
\begin{equation}
	|F(t,C\phi_{1})-F(t,C\phi_{2})|_{\mathbb{R}^{n}} \leq \Lambda \cdot \|C\| \cdot \| \phi_{1}-\phi_{2}\|_{\infty} \leq \Lambda \cdot \sqrt{r} \cdot \| \phi_{1}-\phi_{2}\|_{\infty}
\end{equation}
and one can see that the used estimates are sharp for general $C$.

A theorem of Ryabov and Driver (see item 1) of Theorem 2.1 from \cite{Chicone2003}; see also \cite{Ryabov1967,Driver1968SmallDelays}) guarantees the existence of inertial manifolds if $K\tau e < 1$ or, equivalently, $\sqrt{r} e \tau < \Lambda^{-1}$. This is exactly our condition \eqref{EQ: SmallDelaysDzeroIneq}. A slight strengthening of the inequality $\sqrt{r} e \tau < \Lambda^{-1}$, which as $\tau \to 0$ remains asymptotically the same, guarantees the exponential tracking (see item 2) of Theorem 2.1 from \cite{Chicone2003}), but do not provide any estimate for the exponent. Moreover, Theorem 2.2 from \cite{Chicone2003} guarantees the $C^{1}$-differentiability of the inertial manifold if $F(t,y)$ is $C^{1}$-differentiable in $y$ and the rougher condition $2 \sqrt{r} \sqrt{e} \tau < \Lambda^{-1}$ is satisfied. Our geometric approach from \cite{Anikushin2020Geom} shows that, in fact, these properties (at least in the cases covered by \eqref{EQ: ConstantDelayExample}) are satisfied without further strengthening of \eqref{EQ: SmallDelaysDzeroIneq}. This solves the question posed by R.D. Driver in \cite{Driver1968SmallDelays} (see the remark on p.~338 therein) in the considered class of equations. Moreover, our approach gives the exact description for the exponent of attraction as $\nu_{0} = \tau^{-1}$ and allows to construct stable foliations along the manifold.

Analogous results for neutral delay equations in $\mathbb{R}^{n}$ were obtained by S.~Chen and J.~Shen in \cite{ChenShen2021}. They gave noneffective (and, consequently, nonoptimal) conditions for the existence of $n$-dimensional inertial manifolds and showed their $C^{k}$-differentiability provided that $F(t,y)$ is $C^{k}$-differentiable in $y$. In \cite{ChenShen2021} the class of nonlinearities $F$ is also more general and $D_{0}$ may be time-dependent with $\sup_{t \in \mathbb{R}}\|D_{0}(t)\| < 1$. However, in our case we get the effective estimate \eqref{EQ: SmallDelaysThMainIneq} derived from the optimal (in the sense discussed in Remark \ref{REM: NDEoptimalityFreqIneq}) frequency inequality.

In order to obtain $C^{k}$-differentiability for $k \geq 2$ we need to satisfy \eqref{EQ: Frequency-domainDelay} for two positive numbers $\nu_{0} = \nu_{1}$ and $\nu_{0} = \nu_{2}$ with the same $j$ and such that $\nu_{2} / \nu_{1} > k$ (see \cite{Anikushin2020Semigroups} for a discussion). This is not the case in general and there indeed may exist inertial manifolds, which are only $C^{1}$-differentiable. For equations with small delays such large spectral gaps exist if $\tau$ is sufficiently small.

Note also that the results of \cite{Chicone2004,Chicone2003} and \cite{ChenShen2021} allow to obtain a decomposition of the vector field on the inertial manifold considering $\tau$ as a small parameter. This decomposition can be used to obtain effective dimension estimates for such equations (see \cite{Anikushin2020Semigroups}).
\section{Discussion}
\label{SEC: Discussion}
From the point of view given by our geometric theory \cite{Anikushin2020Geom}, where quadratic Lyapunov functionals are used, the Frequency Theorem presents an analytical tool for the construction of such functionals in particular problems. This approach allows us to separate the geometric and analytical parts of the construction of inertial manifolds. As a result we obtain optimal conditions for the existence of inertial manifolds and unify many scattered results in the field within a common context.

Results of Subsection \ref{SUBSEC: NDENonlinear} can be generalized to allow the presence of infinite number of delays. For such systems, the measurement space $\mathbb{M}$ may be a weighted $l_{2}$-space. As we have mentioned in Introduction, extensions of these results for partial differential equations with delay are also possible.

A far going generalization of the results from Section \ref{SEC: LemmaOnEstimatesForLF} is presented in our work \cite{Anikushin2023Comp} concerned with compound cocycles generated by delay equations. It allows to apply Theorem \ref{TH: FrequencyTheoremRealOperator} to obtain frequency conditions for the uniform exponential stability or existence of gaps in the Sacker-Sell spectrum for such cocycles. As a consequence, this provides a general method to resolve the problem of obtaining effective dimension estimates for global attractors in nonlinear problems (see a discussion in \cite{Anikushin2020Semigroups}) and shows the high computational complexity of the problem.

In applications, there are several ways to write down a given system in the form \eqref{EQ: NonlinearDelayEqs}. This gives some flexibility and may significantly improve the final results (see the work of R.A.~Smith \cite{Smith1992} for a nice example). Moreover, in applied models there are many situations where inertial manifolds are expected to exist, but direct applications of the Frequency Theorem do not give the desired result. This is usually caused by the roughness of the method which does not take into account some specificity of concrete problems. In this regard, it is worth noting that the most effective way to construct inertial manifolds is provided by combining the rough analytical approach and a proper change of variables. We refer to the papers of A.~Kostianko and S.~Zelik \cite{KostiankoZelikKwak,KostiankoZelik20181dII,KostiankoZelik20171dI} for such applications and discussions in the context of semilinear parabolic equations. However, for delay equations, using the change of variables seems even more complicated.

A perspective for further research is given by the question posed in \cite{Anikushin2020FreqParab} that asks to establish a connection between the Spatial Averaging Principle \cite{KostiankoZelikSA2020} and the Frequency Theorem for nonstationary problems. We plan to develop this topic in forthcoming papers.
\section*{Acknowledgements}
The author is grateful to the anonymous referee for careful reading and valuable suggestions, which particularly revealed technical flaws and led to significant improvements in the overall exposition.

\section*{Funding}
The reported study was funded by the Russian Science Foundation (Project 22-11-00172).

\section*{Conflicts of interest}
The author has no conflicts of interest to declare that are relevant to the content of this article.

\section*{Data availability statement}
Data sharing not applicable to this article as no datasets were generated or analyzed during the current study.



%
%


\begin{thebibliography}{10}

\bibitem{Adams1976SobolevSpaces}
Adams R.A. Sobolev Spaces. Academic Press (1975)

\bibitem{Anikushin2023Comp}
Anikushin~M.M. Spectral comparison of compound cocycles generated by delay equations in Hilbert spaces. \textit{arXiv preprint}, arXiv:2302.02537 (2023)

\bibitem{Anikushin2021SS}
Anikushin~M.M., Romanov~A.O. Hidden and unstable periodic orbits as a result of homoclinic bifurcations in the {S}uarez-{S}chopf delayed oscillator and the irregularity of {ENSO}. \textit{Phys. D: Nonlinear Phenom.}, \textbf{445}, 133653 (2023)

\bibitem{Anikushin2020Semigroups}
Anikushin~M.M. Nonlinear semigroups for delay equations in Hilbert spaces, inertial manifolds and dimension estimates, \textit{Differ. Uravn. Protsessy Upravl.}, 4, (2022)

\bibitem{Anikushin2020Geom}
Anikushin~M.M. Inertial manifolds and foliations for asymptotically compact cocycles in Banach spaces. \textit{arXiv preprint}, arXiv:2012.03821v3 (2022)

\bibitem{Anikushin2019+OnCom} 
Anikushin~M.M. On the compactness of solutions to certain operator inequalities arising from the Likhtarnikov-Yakubovich frequency theorem. \textit{Vestnik St. Petersb. Univ. Math.}, \textbf{54}(4), 301--310 (2021)

\bibitem{Anikushin2020FreqParab}
Anikushin~M.M. Frequency theorem for parabolic equations and its relation to inertial manifolds theory. \textit{J. Math. Anal. Appl.}, \textbf{505}(1), 125454 (2021)

\bibitem{Anikushin2021AADyn}
Anikushin~M.M. Almost automorphic dynamics in almost periodic cocycles with one-dimensional inertial manifold. \textit{Differ. Uravn. Protsessy Upr.}, 2, 13--48 (2021), in Russian

\bibitem{Anikushin2020Red}
Anikushin~M.M. A non-local reduction principle for cocycles in Hilbert spaces. \textit{J. Differ. Equations}, \textbf{269}(9), 6699--6731 (2020)

\bibitem{Anikushin2019Vestnik}
Anikushin~M.M. On the Liouville phenomenon in estimates of fractal dimensions of forced quasi-periodic oscillations. \textit{Vestnik St. Petersb. Univ. Math.}. \textbf{52}(3), 234--243 (2019)

\bibitem{AnikushinRR2019}
Anikushin~M.M., Reitmann~V., Romanov~A.O. Analytical and numerical estimates of the fractal dimension
of forced quasiperiodic oscillations in control systems. \textit{Differ. Uravn. Protsessy Upravl.}, 2 (2019), in Russian

\bibitem{Anikushin2019SmithRed}
Anikushin~M.M. On the Smith reduction theorem for almost periodic ODEs satisfying the squeezing property. \textit{Rus. J. Nonlin. Dyn.}, \textbf{15}(1), 97--108 (2019)

\bibitem{ArovStaffans2006}
Arov~D.Z., Staffans~O.J. The infinite-dimensional continuous time Kalman-Yakubovich-Popov inequality. Operator Theory: Advances and Applications, Birkh\"{a}user Verlag Basel, Switzerland, 37--72 (2006)

\bibitem{BatkaiPiazzera2005}
B\'{a}tkai A., and Piazzera S. \textit{Semigroups for Delay Equations}. A K Peters, Wellesley (2005).

\bibitem{BoutetChueshovRezounenko1998}
Boutet~de~Monvel~L., Chueshov~I.D., Rezounenko~A.V. Inertial manifolds for retarded semilinear parabolic equations, \textit{Nonlinear Anal. Theory Methods Appl.}, \textbf{34}(6), 907--925 (1998)

\bibitem{BurnsHerdmanStech1983}
Burns~J.A., Herdman~T.L., Stech~H.W. Linear functional differential equations as semigroups on product spaces. \textit{SIAM J. Math. Anal.}, \textbf{14}(1), 98--116 (1983)

\bibitem{ChenShen2021}
Chen~S., Shen~J. Smooth inertial manifolds for neutral differential equations with small delays. \textit{J. Dyn. Diff. Equat.} (2021)

\bibitem{Chicone2004}
Chicone~C. Inertial flows, slow flows, and combinatorial identities for delay equations. \textit{J. Dyn. Diff. Equat.}, \textbf{16}(3), 805--831 (2004)

\bibitem{Chicone2003}
Chicone~C. Inertial and slow manifolds for delay equations with small delays. \textit{J. Differ. Equations}, \textbf{190}(2), 364--406 (2003)

\bibitem{Chueshov2015}
Chueshov~I. \textit{Dynamics of Quasi-stable Dissipative Systems}. Berlin: Springer (2015)

\bibitem{Driver1968SmallDelays}
Driver~R.D. On Ryabov's asymptotic characterization of the solutions of quasi-linear differential equations with small delays. \textit{SIAM Rev.}, \textbf{10}(3), 329--341 (1968)

\bibitem{EngelNagel2000}
Engel~K.-J., Nagel~R. \textit{One-Parameter Semigroups for Linear Evolution Equations}. Springer-Verlag (2000)

\bibitem{Gelig1978}
Gelig~A.Kh., Leonov~G.A., Yakubovich~V.A. \textit{Stability of Nonlinear Systems with Non-Unique Equilibrium State}. Nauka, Moscow (1978)

\bibitem{Hale1977}
Hale~J.K. \textit{Theory of Functional Differential Equations}. Springer-Verlag, New York (1977)

\bibitem{KokschSiegmund2002}
Koksch~N., Siegmund~S. Pullback attracting inertial manifolds for nonautonomous dynamical systems. \textit{J. Dyn. Differ. Equ.}, \textbf{14}(4), 889--941 (2002)

\bibitem{KostiankoZelikKwak}
Kostianko~A., Zelik~S. Kwak transform and inertial manifolds revisited. \textit{J. Dyn. Differ. Equ.}, 1--21 (2021)

\bibitem{KostiankoZelikSA2020}
Kostianko~A., Li~X., Sun~C., Zelik~S. Inertial manifolds via spatial averaging revisited. \textit{SIAM J. Math. Anal.}, \textbf{54}(1) (2022)

\bibitem{KostiankoZelik20181dII}
Kostianko~A. Zelik~S. Inertial manifolds for 1D reaction-diffusion-advection systems. Part II: periodic boundary conditions, \textit{Commun. Pure Appl. Anal.}, \textbf{17}(1), 285--317 (2018)

\bibitem{KostiankoZelik20171dI}
Kostianko~A. Zelik~S. Inertial manifolds for 1D reaction-diffusion-advection systems. Part I: Dirichlet and Neumann boundary conditions, \textit{Commun. Pure Appl. Anal.}, \textbf{16}(6), 2357--2376 (2017)

\bibitem{Krein1971}
Krein~S.G. \textit{Linear Differential Equations in Banach Space}, AMS, (1971)

\bibitem{KuzReit2020}
Kuznetsov~N.V., Reitmann~V. \textit{Attractor Dimension Estimates for Dynamical Systems: Theory and Computation}. Switzerland: Springer International Publishing AG (2020)

\bibitem{Likhtarnikov1977}
Likhtarnikov~A.L., Yakubovich~V.A. The frequency theorem for continuous one-parameter semigroups. \textit{Math. USSR-Izv.} \textbf{41}(4), 895--911 (1977), in Russian

\bibitem{Likhtarnikov1976}
Likhtarnikov~A.L., Yakubovich~V.A. The frequency theorem for equations of evolutionary type. \textit{Sib. Math. J.}, \textbf{17}(5), 790--803 (1976)

\bibitem{Lions1971OptCont}
Lions~J.-L. Optimal control of systems governed by partial differential equations. Vol. 170. Springer Verlag, 1971

\bibitem{LouisWexler1991}
Louis~J.-Cl., Wexler~D. The Hilbert space regulator problem and operator Riccati equation under stabilizability. \textit{Annales de la Soci\'{e}t\'{e} Scientifique de Bruxelles}, \textbf{105}(4), 137--165 (1991)

\bibitem{Miclavcic1991}
Miklav\v{c}i\v{c}~M. A sharp condition for existence of an inertial manifold. \textit{J. Dyn. Differ. Equ.}, \textbf{3}(3), 437--456 (1991)

\bibitem{Pandolfi1995}
Pandolfi~L. The standard regulator problem for systems with input delays. An approach through singular control theory. Applied Mathematics and Optimization, \textbf{31}(2), 119--136 (1995)

\bibitem{Pritchard1985}
Pritchard~A., Salamon~D. The linear-quadratic control problem for retarded systems with delays in control and observation. \textit{IMA J. Math. Control.}, \textbf{2}(4), 335--362 (1985)

\bibitem{Proskurnikov2015}
Proskurnikov~A.V. A new extension of the infinite-dimensional KYP lemma in the coercive case. \textit{IFAC-PapersOnLine}, \textbf{48}(1), 246--251 (2015)

\bibitem{Romanov1994}
Romanov~A.V. Sharp estimates of the dimension of inertial manifolds for nonlinear parabolic equations. \textit{Izvestiya: Mathematics}, \textbf{43}(1), 31--47 (1994)

\bibitem{Ryabov1967}
Ryabov~Yu.A. Asymptotic properties of solutions of weakly nonlinear systems with small delay. \textit{Trudy Sem. Teor. Differential. Uravnenii s Otklon. Argumentom Univ. Druzhby Narodov Patrisa Lumumby}, 5, 213--222 (1967), in Russian

\bibitem{Smith1992}
Smith~R.A. Poincar\'{e}-Bendixson theory for certain retarded functional-differential equations. \textit{Differ. Integral Equ.}, \textbf{5}(1), 213--240 (1992)

\bibitem{Smith1987OrbStab}
Smith~R.A. Orbital stability for ordinary differential equations. \textit{J. Differ. Equations}. \textbf{69}(2), 265--287 (1987)

\bibitem{Smith1980}
Smith~R.A. Existence of periodic orbits of autonomous retarded functional differential equations. \textit{Math. Proc. Cambridge Philos. Soc.}, \textbf{88}(1) (1980)

\bibitem{Suarez1988}
Suarez~M.J., Schopf~P.S. A delayed action oscillator for ENSO. \textit{J. Atmos. Sci.}, \textbf{45}(21), 3283--3287 (1988)

\bibitem{Zelik2014}
Zelik~S. Inertial manifolds and finite-dimensional reduction for dissipative PDEs. \textit{P. Roy. Soc. Edinb. A}, \textbf{144}(6), 1245--1327 (2014)

\bibitem{ZelikAttractors2022}
Zelik~S. Attractors. Then and now. \textit{arXiv preprint}, arXiv:2208.12101v1 (2022)
\end{thebibliography}

\bibliographystyle{plain}

\end{document}